\newtheorem{thm}{Theorem}[section]
\newtheorem{lem}[thm]{Lemma}
\newtheorem{cor}[thm]{Corollary}
\newtheorem{prop}[thm]{Proposition}
\theoremstyle{definition}
\newtheorem{rem}[thm]{Remark}
\newtheorem{rems}[thm]{Remarks}
\numberwithin{equation}{thm}
\newcommand{\KL}{\text{\rm KL}}
\newcommand\Xreg{X^+_{\text{\rm reg}}}
\newcommand\Xregl{X^+_{\text{\rm reg},l}}
\newcommand{\Xonel}{X_{1,l}^+}
\newcommand{\gen}{{\text{\rm gen}}}
\newcommand{\sC}{{\mathcal {C}}}
\newcommand{\Ext}{{\text{\rm Ext}}}
\newcommand{\St}{\text{\rm St}}
\newcommand{\Hom}{\text{\rm Hom}}
\newcommand{\ch}{\operatorname{ch}}
\newcommand{\soc}{\operatorname{Soc}}
\newcommand{\rad}{\operatorname{rad}}
\newcommand{\rDelta}{\Delta^{\text{\rm red}}}
\newcommand{\rnabla}{\nabla_{\text{\rm red}}}
\newcommand{\Jan}{\Gamma_{\text{\rm Jan}}}
\newcommand{\Gmod}{G{\text{\rm --mod}}}
\newcommand{\opH}{{\text{\rm H}}}
\newcommand{\blist}{\begin{list}{\rom{(\roman{enumi})}}{\setlength
{\leftmarg in}{0em} \setlength{\itemindent}{7ex}
\setlength{\labelsep}{2ex}\setlength{\listparindent}{\parindent}
\usecounter{enumi}}}
\newcommand{\elist}{\end{list}}
\begin{document}

\title[Bounding Ext ]{\large {\bf Bounding Ext for modules
 for algebraic groups, finite groups and quantum groups}}

\author{Brian J. Parshall}
\address{Department of Mathematics \\
University of Virginia\\
Charlottesville, VA 22903} \email{bjp8w@virginia.edu {\text{\rm
(Parshall)}}}
\author{Leonard L. Scott}
\address{Department of Mathematics \\
University of Virginia\\
Charlottesville, VA 22903} \email{lls2l@virginia.edu {\text{\rm
(Scott)}}}
\thanks{Research supported in part by the National Science
Foundation} \subjclass{Primary 20G}

\begin{abstract} Given a finite root system $\Phi$, we show that there is an integer $c=c(\Phi)$ such that
$\dim\Ext_G^1(L,L')<c$, for any reductive algebraic group $G$ with root system $\Phi$ and any irreducible
rational $G$-modules $L,L'$.  There also is such a bound in the case of finite groups of Lie type, depending
only on the root system and not on the underlying field. For quantum groups, a similar
result holds for $\Ext^n$, for any integer $n\geq 0$, using a constant depending only on $n$ and the root system. Weaker versions
of this are proved in the algebraic and finite group cases, sufficient to give similar results for algebraic and generic cohomology. The results both use, and have consequences
for, Kazhdan-Lusztig polynomials. An appendix proves a stable version, needed for small prime arguments, of Donkin's tilting module conjecture. \end{abstract}

\maketitle

\section{Introduction and discussion} In previous work with Edward Cline \cite{CPS7}, we showed that there exists a bound on $\dim\,\opH^1(G, L)$---for a semisimple,
simply connected algebraic group $G$ and an irreducible rational $G$-module $L$---by a constant $C$ that depends only on the
root system of $G$, and not on the module $L$ or the underlying field. We also proved a similar result for the finite groups
of Lie type and irreducible modules in the natural characteristic.\footnote{Recently, Guralnick and Tiep have proved an
analogous result in the cross-characteristic case \cite{GT}.} This result represented the first general progress on a
conjecture of Robert Guralnick \cite{G} that there exists a universal bound on $\dim \opH^1(H,V)$, for all finite groups $H$ and
for all faithful, absolutely irreducible representations $V$ over any field $k$. The space $\opH^1(H,V)$, which parametrizes conjugacy classes of complements to $V$ in the semidirect product
$V\rtimes H$, is particularly important for maximal subgroup theory.\footnote{
See \cite{AsS}, where
$\opH^1(H,V)$ enters into the paramatrization both of maximal subgroups of semidirect products, and larger, more elaborate finite groups. Thus, $\opH^1(H,V)$ is part of a general theory of maximal subgroups. See also \cite[p. 514]{GKKL}, linking $\opH^1(H,V)$ with a conjecture of Wall on maximal subgroups of finite groups. Of course, the conjugacy classes of maximal subgroups, in addition to revealing internal structure, parametrize  the primitive permutation representations of the containing group (through coset actions). }

As is well-known, $\opH^1(H,V)\cong\Ext^1_H(k,V)$, where $k$ is the 1-dimensional trivial module.
For any pair $L,V$ of $kH$-modules, $\Ext^1_H(L,V)$ parametrizes equivalence classes of short
exact sequences $0\to V\to E\to L\to 0$ of $kH$-modules. These sequences are of interest for all $L$, not just for the trivial
module.
Guralnick also expressed the view (privately) that the dimensions of $\Ext^1$-groups between irreducible modules should
also be quite small. We know that no universal constant bound is possible here (see  Scott-Xi \cite{SX}); however,
the asymptotic properties of the growth of these $\Ext^1$-groups has considerable interest.

The first main result of this
paper, see Theorem \ref{MainExtOne theorem}, establishes that, given a finite root system $\Phi$, there
exists a constant $c=c(\Phi)$ such that if $G$ is a semisimple, simply connected algebraic group over an algebraically closed field $k$
of any characteristic $p$ and if $G$ has root system $\Phi$, then $\dim\Ext^1_G(L,L')\leq c$, for all irreducible, rational $G$-modules $L,L'$. This result
improves upon a similar partial result given in \cite[Thm. 7.9]{CPS7}. As in \cite[Thm. 7.10]{CPS7}, it implies the same
result for finite groups of Lie type associated with the root system $\Phi$ with $L,L'$ irreducible modules in the natural
characteristic. See Corollary \ref{finitegroupcorollary}.

In this paper, as in \cite{CPS7}, we attack the
issue of a bound in the algebraic group case by first showing that, if the prime $p$ is fixed, there is a uniform bound for all $L,L'$. Some care is required when $p=2$; see \S3. With this analysis complete,  take $p$ large, and, in particular,
assume that the Lusztig character formula holds. We can then utilize the (Lusztig) quantum group $U_\zeta$ at a $p$th root of 1 associated to $G$.  We also can assume that if $L=L(\lambda)$ and $L'=L(\mu)$ (for dominant
 weights $\lambda,\mu$), then $\lambda\not\equiv
\mu$ mod$\,p$ and $\lambda<\mu$. In \cite{CPS7}, we treated the case for $\lambda,\mu$ regular and showed that
$$\dim\Ext^1_G(L(\lambda),L(\mu))\leq\dim\Ext^1_G(\rDelta(\lambda),\rnabla(\mu))=\dim\Ext^1_{U_\zeta}(L_\zeta(\lambda),L_\zeta(\mu)),$$
using the natural modules $\rDelta(\lambda)$ and $\rnabla(\mu)$ that arise from the irreducible modules $L_\zeta(\lambda)$ and
$L_\zeta(\mu)$, respectively, for $U_\zeta$  by a standard ``reduction mod$\, p$" process. Here, we
treat the singular weight case by focusing more on the (ostensibly larger) group $\Ext^1_G(\Delta(\lambda),L(\mu))$ and by using translation
arguments to reduce to the regular case (an approach announced by us in \cite{S3}), maintaining the condition $\lambda\not\equiv
\mu$ mod$\,p$. The bound is given in that case, as in \cite{CPS7}, by an appropriate
``top'' coefficient $\mu(x,y)$ of a Kazhdan-Lusztig polynomial for the
affine Weyl group of $\Phi$. A different way of treating the case $\lambda\not\equiv\mu$ mod$\,p$ is stated
in Theorem \ref{Gsum}, which bounds an infinite sum of $\Ext^1$-dimensions with a single constant. The proof of this
latter result is postponed to \S7.

Section 6 studies the interaction of quantum group cohomology/representation theory with combinatorial considerations of
Kazhdan-Lusztig polynomials. Let $U_\zeta$ be the (Lusztig) quantum enveloping algebra at an $l>h$ root of unity (where $h$ is the Coxeter
number of $\Phi$). Corollary \ref{weakerbound} shows that $\dim\,\Ext_{U_\zeta}^n(L_\zeta(\lambda),L_\zeta(\nu))$ is bounded uniformly by a constant depending only on $n$ and $\Phi$.
 Actually, a stronger result, given in Theorem \ref{bounding sum of exts}, shows that  there is a uniform bound on the sum of all these dimensions, over all possible dominant weights $\nu$, when the weight $\lambda$ is fixed. The bound again depends only on $n$ and $\Phi$, and not on $\lambda$. In addition to homological applications, there are interesting consequences for Kazhdan-Lusztig polynomials; see Theorem \ref{KL sum bounds}. The key to Theorem \ref{bounding sum of exts} is Lemma \ref{biglengthlemma}, which shows that the composition series length of all PIMs are uniformly bounded in the quantum case, depending only on $\Phi$. While this very quickly yields the boundedness property given in Theorem \ref{bounding sum of exts}, the bounds thus obtained are quite crude.
 (The results of Section 6 lead---in a paper \cite{PS6} in preparation---to a ``complexity theory" for the quantum
 enveloping algebras $U_\zeta$, analogous to the classical cohomological complexity theory for finite groups\footnote{We defer to
 \cite{PS6} some of our previous (posted preprint) remarks relevant
to \cite{GKKL} and possible behavior of higher degree cohomology of finite simple
groups. Related comments in this paper may be found above Corollary \ref{weakerbound}.} and restricted enveloping algebras.)

The main result of \S7, given in Theorem \ref{SecondBigTheorem}, provides a higher degree version of the main $\Ext^1$-result
given in \S5. Specifically, we show that, given nonnegative integers $e,m$, there exists a constant $c(\Phi,m,e)$
depending only on $\Phi$, $e$, and $m$ such that $\dim\Ext^m_G(L(\lambda),L(\mu))\leq c(\Phi,m,e)$, for all $p^{e+1}$-restricted
dominant weights $\lambda$ and all dominant weights $\mu$. The proof is by induction on $m$. When $p\geq 2h-2$, the projective
covers of the irreducible modules for the $r$th infinitesimal subgroup $G_r$ of $G$ are known to have compatible $G$-module structures, and an essential
step involves showing that there is a bound $C^\flat(\Phi,r)$ on the $G$-length of these modules which depends only on
the root system $\Phi$ and the integer $r$; see Lemma \ref{firstlemmatoSecondMainThm}.  As mentioned above, the analogous result for the quantum PIM $Q_\zeta(\lambda)$ for
{\it all} dominant $\lambda$ is established in \S6. The difference between the two cases seems to hinge on the fact that
in the algebraic group case $G/G_1\cong G$,  while in the quantum enveloping algebra case, if $u_\zeta$ is the little quantum
group, then the quotient $U_\zeta/u_\zeta\cong U({\mathfrak g})$ has a completely reducible finite dimensional representation theory.
Another important ingredient makes use of the stability estimates proved in \cite{CPSK} for generic cohomology (viewed in
terms of twists by
powers of of the Frobenius endomorphsm)  of reductive
groups. Section 7 concludes with the postponed proof of Theorem \ref{Gsum}. An interesting consequence of this theorem shows that there are only a finite number of dominant weights $\lambda$
 with  $\lambda\not\equiv 0$ mod $p^2$ for which $H^1(G,L(\lambda))\not=0$.
Further, given any dominant $\mu$, $\dim H^1(G,L(\mu))$ may be computed in terms of some $\dim H^1(G,L(\lambda))$ with $\lambda$ as above. See Remark \ref{concludingremarks}(c). Possibly changing $G$ when $p=2$, we may even take $\lambda\not\equiv 0$ mod $p$.

It is important to mention that the results of Section 7 do not require any restriction on the size of the characteristic $p$.
In an Appendix (Section 8), we develop the necessary machinery to handle the small prime cases. A long open problem in modular representation theory is the existence of a rational $G$-module structure on the PIMs for the infinitesimal subgroups $G_e$ of $G$.
As mentioned above, when $p\geq 2h-2$, the existence of a $G$-module structure is known to hold (by Jantzen and earlier work
by Ballard). On the other hand, for all $p$, Donkin \cite{Donkin2} has conjectured that, for any $e\geq 1$ and any
$p^e$-restricted dominant weight $\lambda$, the $G_e$-PIM $Q_e(\lambda)$ is $G_e$-isomorphic to the restriction to $G_e$ of
the tilting module $T(2(p^e-1)\rho+w_0\lambda)$ of highest weight $2(p^e-1)\rho+w_0\lambda$. We prove, for all characteristics,
a ``stable" version of Donkin's conjecture; see Corollary \ref{directsumcorollary} for a precise
statement. Making use of this result, we construct in Corollary \ref{lastappendixcorollary} a substitute for the $G_e$-PIMs
which can be used in Section 7 to complete the arguments for all characteristics.\footnote{This improves on an earlier posted preprint, which treated
only the case $p>h$ in the presence of the Lusztig character formula.
  Actually, the argument there, using derived category ``shifted
  standard filtrations," appears to be complete only when
  a bound on the exponent of both weights $\lambda$ and $\mu$ is given.
  It does, however, appear to give useful bounds in that case.  }

Our results on algebraic group cohomology for $G$ have the consequence of bounding generic cohomology with irreducible                      coefficients in any given degree $m$ by a constant depending only on $m$ and the root system; see the end of
\S7. If $m>1$, it remains open if there is such a bound for all finite groups of Lie type associated to the root system.
(The issue is that the rate of convergence in the limit $\underset{d\to\infty}\lim H^n(G(p^d),V)$ depends on the coefficient
module $V$.)

           The authors are grateful to Bob Guralnick for comments on an earlier version of this paper.

\section{Some preliminaries}
Let $\Phi$ be an irreducible (classical) finite root system spanning a Euclidean space $\mathbb E$ with inner
product $(u,v)$.  The assumption that $\Phi$ is irreducible is only a convenience, and the setting can easily be
 generalized to the case of a general finite  root system. Fix a set $\Pi=\{\alpha_1,\cdots,\alpha_{\text{rk}(G)}\}$ of simple roots and let $\Phi^+$ be the positive roots determined by
$\Pi$. Let $\alpha_0$ be the maximal short root in $\Phi^+$, and, for $\alpha\in\Phi$, put $\alpha^\vee=\frac{2}{(\alpha,\alpha)}\alpha$, the coroot attached to $\alpha$.    Let $Q=Q(\Phi)$ be the root lattice, i.~e., $Q={\mathbb Z}\alpha_1\oplus\cdots\oplus{\mathbb Z}\alpha_{\text{rk}(G)}
\subset \mathbb E$.

Let $X\subset \mathbb E$ be lattice of all integral weights: $\lambda\in\mathbb E$ belongs to $X$ if and only if $(\lambda,\alpha^\vee)\in\mathbb Z$ for
all $\alpha\in\Pi$. Thus, $Q\subseteq X$. Let $\varpi_1,\cdots,\varpi_{\text{rk}(G)}\in X$ be the fundamental dominant weights defined by $(\varpi_i,\alpha_j^\vee)=\delta_{i,j}$, $1\leq i,j\leq \text{rk}(G)$. Then $X^+:={\mathbb N}\varpi_1\oplus\cdots\oplus{\mathbb N}\varpi_{\text{rk}(G)}$
is the set of dominant weights.
Put $\rho=\varpi_1+\cdots+\varpi_{\text{rk}(G)}$ (the Weyl weight) and $h=(\rho,\alpha_0^\vee)+1$
(the Coxeter number of $\Phi$).

For a positive integer $l$, let $\Xonel:=\{\lambda\in X^+\,|\,(\lambda,\alpha^\vee)<l,\,\forall\alpha\in\Pi\}$ be the set of
$l$-restricted dominant weights. More generally, if $e\geq 1$ is an integer, we set $X^+_{e,l}:=\{\lambda\in X^+\,|\,
(\lambda+\rho,\alpha^\vee)<l^e,\,\,\forall\alpha\in\Pi\}$. When $l=p$, a prime, the set $X^+_{e,p}$ of $p^e$-restricted
dominant weights will be used in Sections 7 and 8.

Regard $\mathbb E$ (or any subset) as a poset by setting $\lambda\leq
\nu$ provided that $\nu-\lambda=\sum_{\alpha\in\Pi}n_\alpha\alpha$, where each $n_\alpha$ is a non-negative integer. Another
partial ordering $\leq'$ is sometimes useful: put $\lambda\leq'\nu$ provided that $\nu-\lambda=\sum_{\alpha\in\Pi}q_\alpha\alpha$,
with each $q_\alpha\in {\mathbb Q}^+$.  An ideal $\Omega$ of $\Gamma$ is a subset such that $\lambda\leq \nu$,
with $\nu\in\Omega$ and $\lambda\in\Gamma$, implies $\lambda\in\Omega$.

The Weyl group $W$ is a Coxeter group with   fundamental reflections $\{s_{\alpha_1},\cdots,s_{\alpha_{\text{rk}(G)}}\}$,
where, given $\alpha\in\Phi$, $s_\alpha:{\mathbb E}\to{\mathbb E}$, $x\mapsto s_\alpha(x):=x-(x,\alpha^\vee)\alpha$, $x\in\mathbb E$. For $\alpha\in\Phi$, $n\in\mathbb Z$, let $s_{\alpha, n}:{\mathbb E}\to{\mathbb E}$ be the affine transformation
$x\mapsto s_{\alpha,n}(x):=x-\left((x,\alpha^\vee)-n\right)\alpha$. Let $W_a$ be the group of affine transformations generated by
the $s_{\alpha,n}$, $\alpha\in\Phi$, $n\in\mathbb Z$.   Since $s_\alpha=s_{\alpha,0}\in W_a$, $W$ is a subgroup of $W_a$; in fact,
$W_a\cong W\ltimes Q$, identifying $Q=Q(\Phi)$ with the subgroup of $\text{Aff}({\mathbb E})$ (= group of affine transformations
of $\mathbb E$) consisting of translations by elements of $Q$. Putting $S_a:= S\cup\{s_{\alpha_0,-1}\}$, $(W_a,S_a)$ is a Coxeter
system.  If $s=s_{\alpha,n}\in W_a$,  $H_s\subset\mathbb E$ is its fixed-point hyperplane.

For a positive integer $l$, let $W_{a,l}$ be the subgroup of $W_a$ generated by the affine reflections $s_{\alpha,n}$ in which $l$ divides $n$.
There is an evident isomorphism $\varepsilon_l:W_a\overset\sim\to W_{a,l}$ in which $s_{\alpha,n}\mapsto s_{\alpha,nl}$.
We will use the ``dot" action of $W_a$ on $\mathbb E$: for $w\in W_a$, $x\in\mathbb E$, put $w\cdot x:=w(x+\rho)-\rho$. For a positive integer $l$, $w\cdot_lx:=\varepsilon_l(w)\cdot x$. Setting $S_{a,l}:=S\cup\{s_{\alpha_0,-l}\}$,
$(W_{a,l},S_{a,l})$ is a Coxeter system.

 For $x,y\in W_{a,l}$, let $P_{y,x}\in{\mathbb Z}[q]$ be the Kazhdan-Lusztig polynomial in $q=t^2$ associated with the
pair $(y,x)\in W_{a,l}\times W_{a,l}$.\footnote{The integer $l$ should always be clear from context when discussing $P_{y,x}$.  Of course, because of the isomorphism $\epsilon_l:W_a=W_{a,1}\overset\sim\to W_{a,l}$, the Kazhdan-Lusztig polynomial $P_{y,x}$, $x,y\in W_{a,l}$, for the Coxeter system $(W_{a,l},S_{a,l})$ identifies with a Kazhdan-Lusztig polynomial $P_{\epsilon_l^{-1}(x),\epsilon_l^{-1}(y)}$
for the Coxeter system $(W_a,S_a)$.}
 Necessarily, $P_{y,x}\not=0$ implies that $y\leq x$ in the Bruhat-Chevalley
order on $W_{a,l}$. If $\ell:W_{a,l}\to\mathbb N$ is the length function (defined by $S_{a,l}$) and $y<x$, then $P_{y,x}$ is a polynomial in $q$ of degree $\leq(\ell(x)-\ell(y)-1)/2$. (Also, $P_{x,x}=1$ for all $x$.) For $y\leq x$, let $\mu(y,x)$ be the coefficient of $q^{(\ell(x)-\ell(y)-1)/2}$ in $P_{y,x}$. Thus,
$\mu(y,x)=0$ unless $x$ and $y$ have opposite parity. In particular, $\mu(x,x)=0$. If $x<y$, set
$\mu(y,x):=\mu(x,y)$, and put $\mu(y,x)=0$ if $x,y$ are not comparable.

 Let $C^-_l\subset\mathbb E$
be the chamber defined by the hyperplanes $H_s$, $s\in\{s_{\alpha_1},\cdots, s_{\alpha_l},s_{\alpha_0,-l}\}$.
Its closure $\overline{C^-_l}$ is a fundamental domain for the dot action of $W_{a,l}$ on $\mathbb E$ (or for the $\cdot_l$-action
of $W_a$ on $\mathbb E$). Let $C^-:=C^-_1$ if $l=1$.

We call $w\in W_a$ dominant if $w\cdot C^- +\rho$ is contained in the dominant cone $\{x\in{\mathbb E}\,|\, (x,\alpha^\vee_i)\geq
0, \, 1\leq i\leq r\}$ of $\mathbb E$. (Similarly, an element $\varepsilon_l(w)=\bar w$ is $l$-dominant if $\bar w\cdot C_l^- +\rho =w\cdot_lC_l^-+\rho$
is contained in the dominant cone. Thus, $w$ is dominant if and only if $\varepsilon_l(w)$ is $l$-dominant.) Let $W_a^+$ (resp.,
$W^+_{a,l}$) be the set of dominant elements in $W_a$ (resp., $W_{a,l}$). For any $x\in W_a$, the right coset $Wx$ of $W$ in
$W_a$ contains a unique element $x'$ of {\it maximal} length among all other elements in the right coset. Then $W_a^+$ is
simply the set of right coset representatives which have maximal length. (Equivalently, $W_a^+$ consists of all elements
 $w_0x$, where $x$ is a ``distinguished" (= minimal length) right coset representative of $W$ in $W_a$, and $w_0\in W$ is the element of
maximal length in $W$.) A similar description of $W_{a,l}^+$ also holds.

Let $\Xregl:=\{\lambda\in X^+\,|\, (\lambda+\rho,\alpha^\vee)\not\equiv 0\,{\text{mod}}\,l,\,\forall\alpha\in\Phi\}$
be the   $l$-regular dominant weights. A weight that is not $l$-regular is called $l$-singular, or just singular. Thus, $\Xregl\not=\emptyset$ if and only if $l\geq h$. Let $\lambda=w\cdot\lambda^-\in X^+$, $\lambda^-\in C^-_l$ and $w\in W^+_{a,l}$.
Define
\begin{equation}\label{LCF}
\chi_{\KL}(\lambda,l)=\sum_{y\in W^+_{a,l}}(-1)^{\ell(w)-\ell(y)}P_{y,w}(1)\chi(y\cdot\lambda^-)\in{\mathbb Z}X.\end{equation}
For $\nu\in X^+$, $\chi(\nu)=\sum_{w\in W}(-1)^{\ell(w)}e(w\cdot\nu)/\sum_{w\in W}(-1)^{\ell(w)}e(w\cdot 0)$ is
the Weyl character in the integer group algebra ${\mathbb Z}X$.   If $l$ is clear from context, write $\chi_{\KL}(\lambda)$ for $\chi_{\KL}(\lambda,l)$.

We work with several algebraic objects attached to the root system $\Phi$.

\smallskip (1) $G$ denotes a simple, simply connected
algebraic group over an algebraically closed field $k$ of positive characteristic with fixed Borel subgroup $B$ containing
a maximal torus $T$.\footnote{Thoughtout this paper, $k$ will always denote an algebraically closed field. The assumption that $G$ be simple is only for convenience. All results  hold if $G$ is only assumed to be semisimple.} The Lie algebras of $G,T,B$, etc. will be denoted by the corresponding fraktur letters $\mathfrak g$,
$\mathfrak b$, $\mathfrak t$, etc. We assume that $\Phi=\Phi(T)$ is the set of roots of $T$, so $X=X(\Phi)=X(T)$ and $Q=Q(\Phi)=Q(T)$.
For $\lambda\in X^+$, let $L(\lambda)$ be the irreducible rational $G$-module of highest weight $\lambda$. Also, let $\Delta(\lambda)$ and $\nabla(\lambda)$
be the standard (Weyl) and costandard modules, respectively, of highest weight $\lambda$. Thus, $L(\lambda)$ is the socle (resp., head) of $\nabla(\lambda)$ (resp.,
$\Delta(\lambda)$). Also, $\Delta(\lambda)$ and $\nabla(\lambda)$ have equal characters given by Weyl's formula, i.~e.,
$\ch\Delta(\lambda)=\ch\nabla(\lambda)=\chi(\lambda)$.

Assume that $p\geq h$. A $p$-regular dominant weight $\lambda=w\cdot\lambda^-$, $\lambda^-\in C^-_p$ is said to satisfy the Lusztig character formula
(LCF) if $\ch\,L(\lambda)=\chi_{\KL}(\lambda,p).$
Recall that the Jantzen region $\Jan:=\{\lambda\in X^+\,|\,(\lambda+\rho,\alpha_0^\vee)\leq p(p-h+2)$.\footnote{It is useful
to note that $\Jan$ contains $X^+_{1,p}$ if and only if $p\geq 2h-3$. Also, if $\sigma=\sigma_0+p\sigma^\dagger\in\Jan$ with
$\sigma_0\in X^+_{1,p}$, $\sigma^\dagger\in X^+$, then $\sigma^\dagger\in \overline C_p$.}
For characteristic $\geq h$ sufficiently large (depending
on $\Phi$) the LCF holds
for all regular $\lambda\in\Jan$; see \cite{AJS} (and also the survey \cite[\S8]{T} for  references) as well as \cite{Fiebig} where the methods of \cite{AJS} are improved to
give a specific bound for each root system. If $l\geq h$, then $\ch L_\zeta(\lambda)=\chi_{KL}(\lambda,l)$ for any $\lambda\in X^+_{\text{reg},l}$. (Suitably formulated, the LCF holds for all $\lambda\in X^+$, $l$-regular or not.) In particular, if $p\geq h$ and if the LCF holds on $\Jan$, then
given $\lambda\in X^+_{\text{reg},l}$, $L(\lambda)$ is obtained  by reduction mod $p$ from $L_\zeta(\lambda)$, $\zeta=\sqrt[p]{1}$.

We  assume that $G$ as well as $B$, $T$ are defined and split over
the prime field ${\mathbb F}_p$. Let $F:G\to G$ be the Frobenius morphism, and, for $e\geq 1$, let $G_e=\ker(F^e)$ be the $e$th infinitesimal subgroup.
If $V$ is a rational $G$-module $V^{(e)}$ denotes the rational $G$-module obtained by pulling the action of $G$ on $V$ back through $G^r$. The set $X^+_{e,p}$ indexes the irreducible rational  $G_e$-modules; given $\lambda\in X_{e,p}^+$, $L(\lambda)|_{G_e}$ is an irreducible $G_e$-module, and representatives
of the distinct isomorphism classes of irreducible $G_e$-modules are given by these modules. (When $L(\lambda)$, or any $G$-module,
is to be regarded as a $G_e$-module byu restriction, we will often be somewhat informal,
writing $L(\lambda)$ instead of $L(\lambda)|_{G_e}$.) For $\lambda\in X^+$, write
\begin{equation}\label{padicexpansion}\lambda=\sum_{i=0}^\infty p^i\lambda_i,\,(\lambda_i\in X_{1,p}^+),\quad \lambda^{(i)}=\sum_{j=i}^\infty p^{j-i}\lambda_j.
\end{equation}
For convenience, we often denote $\lambda^{(1)}$ more simply by $\lambda^\dagger$. Thus, given $\lambda\in X^+$, it has
a unique decomposition $\lambda=\lambda_0+p\lambda^\dagger$, for $\lambda_0\in X^+{1,p},\lambda^\dagger\in X^+$. Also,
$L(\lambda0\cong L(\lambda_0)\otimes L(\lambda^\dagger)^{(1)}$.

\medskip (2) Let $l$ be a positive integer, and let $\zeta=\sqrt[l]{1}$ be a primitive
$l$th root of unity in the complex numbers $\mathbb C$. We will assume that $l$ is an odd integer\footnote{The assumption that $l$ is odd can be avoided, using \cite{Lbook}.} and if $\Phi$ is of type $G_2$, in addition, that $(l,3)=1$.  Let $U_\zeta=U_{(\zeta=\sqrt[l]{1})}$ be the (Lusztig) quantum enveloping algebra over the cyclotomic field ${\mathbb Q}(\zeta)$ with
``root system $\Phi$". In the sequel, when discussing $U_\zeta$, the above restriction on $l$ (imposed by $\Phi$) will always
be in force (though not usually mentioned).
Also, $U_\zeta$-mod denotes the category of finite dimensional $U_\zeta$-modules which are integrable and type $1$. For $\lambda\in
X^+$, let $L_\zeta(\lambda)$ (resp., $\Delta_\zeta(\lambda)$, $\nabla_\zeta(\lambda)$) be the irreducible (resp., standard (Weyl)
costandard) module of highest weight $\lambda$. If $l>h$ and $\lambda\in\Xregl$, then $\ch\,L_\zeta(\lambda)=\chi_{\KL}(\lambda,l)$.\footnote{The
requirement that $\lambda\in \Xregl$ is not necessary, but requires more care in the definition of $\chi_{\KL}(\lambda,l)$
(\ref{LCF}) and
is not needed in this paper. Also, the assumption that $l>h$ can sometimes be relaxed; see \cite[\S7]{T}.}

Let $u_\zeta$ be the ``little" quantum group attached to $U_\zeta$. It is a normal, Hopf subalgebra of $U_\zeta$
such that $U_\zeta//u_\zeta\cong U({\mathfrak g}_{\mathbb C})$ the universal enveloping algebra of the complex simple Lie
algebra with root system $\Phi$. If $M\in U_\zeta$-mod, then the subspace $M^{u_\zeta}$ of $u_\zeta$-fixed points is a
locally finite (and completely reducible) $U({\mathfrak g}_{\mathbb C})$-module (i.~e., a rational module for the complex
algebraic group $G_{\mathbb C}$ with Lie algebra ${\mathfrak g}_{\mathbb C}$.) For $M,N\in U_\zeta$-mod, and
any integer $n$, an elementary Hochschild-Serre spectral sequence argument gives:
\begin{equation}\label{HochSerre} \Ext^n_{U_\zeta}(M,N)\cong \Ext^n_{u_\zeta}(M,N)^{U({\mathfrak g}_{\mathbb C})}.\end{equation}
Let $\text{\rm Fr}:U_\zeta\to U_\zeta//u_\zeta\cong U({\mathfrak g}_{\mathbb C})$  be the quotient (Frobenius) map. Given a
${\mathfrak g}_{\mathbb C}$-module $M$, $M^{(1)}\in U_\zeta$-mod denotes the pullback of $M$ through Fr. For $\lambda\in X^+$,
let $L_{\mathbb C}(\lambda)$ be the irreducible ${\mathfrak g}_{\mathbb C}$-module of highest weight $\lambda$.

\section{Some cohomology results}

Let $G$ be as in \S2(1).
We will need the following result, due to Andersen \cite[Thm. 4.5]{A1}.

\begin{thm}\label{andersentheorem}Unless $p=2$ and $G$ has type $C_r$,
$$\Ext^1_{G_1}(L,L)=0$$
for any rational irreducible $G_1$-module.\end{thm}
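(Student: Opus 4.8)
\emph{Proof plan.} The plan is to pass from $G_1$ to $G_1T$ and then read the answer off the alcove geometry of the latter. Fix $\lambda\in X^+_{1,p}$, set $L=L(\lambda)$, and write $\widehat L(\mu)$ and $\widehat Z(\mu)=\Dist(G_1)\otimes_{\Dist(G_1B)}k_\mu$ for the irreducible and the baby Verma $G_1T$-module of highest weight $\mu\in X$. Since $G_1$ acts trivially on its own cohomology, the rational $G$-module $\Ext^1_{G_1}(L,L)$ has all of its $T$-weights in $pX$; combining this with the identity $\Ext^1_{G_1}(M,N)_\mu\cong\Ext^1_{G_1T}(M,N\otimes k_{-\mu})$ for $G_1T$-modules $M,N$, and with $\widehat L(\lambda)\otimes k_{-p\nu}\cong\widehat L(\lambda-p\nu)$, gives
\begin{equation*}
\Ext^1_{G_1}(L(\lambda),L(\lambda))\;\cong\;\bigoplus_{\nu\in X}\Ext^1_{G_1T}\bigl(\widehat L(\lambda),\widehat L(\lambda-p\nu)\bigr).
\end{equation*}
It therefore suffices to show that each summand vanishes.

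For $\nu=0$ this is formal: the $\lambda$-weight space of $\widehat Z(\lambda)$ is one dimensional, so $[\widehat Z(\lambda):\widehat L(\lambda)]=1$ and $\widehat L(\lambda)$ does not occur in $\rad\widehat Z(\lambda)$; feeding $0\to\rad\widehat Z(\lambda)\to\widehat Z(\lambda)\to\widehat L(\lambda)\to0$ into $\Hom_{G_1T}(-,\widehat L(\lambda))$ and using the $\Ext^1$-orthogonality of standard and costandard objects in the highest weight category $G_1T$-mod yields $\Ext^1_{G_1T}(\widehat L(\lambda),\widehat L(\lambda))=0$.

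For $\nu\ne0$ I would use the standard description of the first radical layer of a baby Verma module: the composition factors of $\rad\widehat Z(\mu)/\rad^2\widehat Z(\mu)$ all have the form $\widehat L(s\cdot_p\mu)$ with $s=s_{\beta,mp}$ an affine reflection across a wall bounding the facet of $\mu$ — in effect the degree-one part of a Kostant-type computation for the nilpotent radical, cut down by the $G_1T$-linkage principle. Since a nonzero $\Ext^1_{G_1T}(\widehat L(\lambda),\widehat L(\lambda-p\nu))$ forces $\widehat L(\lambda-p\nu)$ into the first radical layer of $\widehat Z(\lambda)$ (or $\widehat L(\lambda)$ into that of $\widehat Z(\lambda-p\nu)$, when $\lambda-p\nu>\lambda$), one gets $\lambda-p\nu=s_{\beta,mp}\cdot_p\lambda=\lambda-\bigl((\lambda+\rho,\beta^\vee)-mp\bigr)\beta$ for some root $\beta$ and some $m\in\mathbb Z$, with $\beta$ not lying in the $\cdot_p$-stabilizer of $\lambda$. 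Hence $p\nu=c\beta$ with $c:=(\lambda+\rho,\beta^\vee)-mp$; and since $\beta$ does not stabilize $\lambda$, $(\lambda+\rho,\beta^\vee)\not\equiv0\pmod p$, so $p\nmid c$ and therefore $\beta\in pX$.

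The argument is then completed by the elementary fact that a root $\beta$ of $\Phi$ can lie in $pX$ only when $p=2$ and $\Phi$ has a component of type $C_r$, in which case $\beta$ is a long root $2\varpi_1$ (e.g.\ $2e_1$); conversely, in that case one does find generic $\lambda$ with $\lambda\equiv s_{\beta,2m}\cdot_2\lambda\pmod{2X}$ and a genuine nonzero self-extension, so the exceptional case is sharp. The main obstacle in this program is the structural input used in the $\nu\ne0$ step — the first-radical-layer description of baby Verma modules (equivalently, the degree-one restricted Kostant theorem) together with the $G_1T$-linkage principle, which must be available for \emph{all} $p$ and, with the appropriate care, for singular $\lambda$ as well; this is the technical heart of the argument, while the reduction to $G_1T$, the $\nu=0$ case, and the root-lattice combinatorics are routine.
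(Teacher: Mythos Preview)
The paper does not supply its own proof of this statement; it is quoted directly from Andersen \cite[Thm.~4.5]{A1}. Your outline is, in substance, a reconstruction of Andersen's argument: pass to $G_1T$ via the $T$-weight decomposition of $\Ext^1_{G_1}$, dispose of $\nu=0$ by highest-weight-category formalities, and for $\nu\ne0$ force $\lambda-\mu\in\mathbb Z\beta$ for a single root $\beta$ with the multiplier prime to $p$, whence $\beta\in pX$ and the $(C_r,\,p=2)$ exception. The strategy and the combinatorial endgame are correct.

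The one place requiring care is your ``standard description of the first radical layer of a baby Verma module.'' This is not a result available independently of \cite{A1}; it \emph{is} the content of Andersen's computation. His route is not through radical layers but directly via Frobenius reciprocity: $\Ext^1_{G_1T}\bigl(\widehat L(\lambda),\widehat Z'(\mu)\bigr)\cong H^1\bigl(U_1,\widehat L(\lambda)^*\bigr)_{-\mu}^{T}$, with the key input $H^1(U_1,k)\cong\mathfrak u^*$ (your ``degree-one Kostant'') carrying $T$-weights $-\alpha$ for $\alpha\in\Phi^+$. In particular, the step that guarantees $p\nmid c$ --- equivalently, that only reflections through walls \emph{bounding} the facet of $\lambda$ occur, never ``far'' parallel walls $H_{\beta,mp}$ with $(\lambda+\rho,\beta^\vee)\equiv0\pmod p$ --- is part of that detailed $B_1T$-calculation and does not follow from linkage alone. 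So your roadmap is sound, but you should recognise that the black box you invoke is itself what Andersen proves; the rest of your argument (the reduction to $G_1T$, the $\nu=0$ case, and the lattice step $\beta\in pX\Rightarrow p=2,\,C_r$) is indeed routine.
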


Because this result fails when $G$ has type $C_r$ and $p=2$, more attention is required to bound $\Ext^1_G$ in case $p=2$.
Until Proposition \ref{shifting}, $k$ has characteristic $2$ and $G$ (resp., $G'$) is the simple, simply connected algebraic group over $k$ of type $C_r$ (resp., type $B_r$). Let $T'$, etc. be the maximal torus, etc.
of $G'$. The group $G'$ contains a closed subgroup $G^{\prime\prime}$ which is simple of type $D_r$, viz., $G^{\prime\prime}$ is the closed subgroup of $G'$ generated
by $T'$ and the root subgroups $U_\alpha$ corresponding to long roots $\alpha$ in $\Phi'$. The torus $T^{\prime\prime}:=T'$ is a maximal torus in $G^{\prime\prime}$. Also,
$G^{\prime\prime}$ is simply connected because one easily checks that the index $X(T^{\prime\prime})/Q(T^{\prime\prime})$ has order
4.

 The Euclidean space $\mathbb E$ (resp., ${\mathbb E}'$, ${\mathbb E}^{\prime\prime}$) contains $X(T)$ (resp., $X(T')$, $X(T^{\prime\prime})$) and has orthonormal
  basis $\{\epsilon_1,\cdots, \epsilon_r\}$ (resp., $\{\epsilon'_1,\cdots, \epsilon_r'\}$, $\{\epsilon^{\prime\prime}_1,\cdots, \epsilon^{\prime\prime}_r\}$), chosen
  as in  \cite[pp. 267-273]{Bo}. Thus,
\begin{equation}\label{rootsystem}\begin{cases} \Phi=\{\pm \epsilon_i\pm\epsilon_j (i\not= j), \pm 2\epsilon_i \,|\, 1\leq i,j\leq r\},\\
\Phi'=\{\pm \epsilon'_i\pm\epsilon'_j (i\not= j), \pm\epsilon'_i\,|\,1\leq i,j\leq r\},\\
\Phi^{\prime\prime}=\{\pm \epsilon^{\prime\prime}_i\pm\epsilon^{\prime\prime}_j (i\not= j)\,|\,1\leq i,j\leq r\}.\end{cases}
\end{equation}
describe the root systems of $G$, $G'$ and $G^{\prime\prime}$, respectively. Since $T^{\prime\prime}=T'$, we can assume that
$\epsilon_i'= \epsilon_i^{\prime\prime}$.

Identify $X^{\mathbb Q}(T):={\mathbb Q}\otimes X(T)$ with the $\mathbb Q$-span of $\epsilon_1,\cdots,\epsilon_r$ in $\mathbb E$, and make a similar convention
for $X^{\mathbb Q}(T')$ and $X^{\mathbb Q}(T^{\prime\prime})$.
There is a special isomorphism
$\varphi:X^{\mathbb Q}(T')\to X^{\mathbb Q}(T)$, $\epsilon'_i\mapsto 2\epsilon_i, \,1\leq i\leq r$,
as in \cite[Def. 1, \S18.2]{Chevalley}. It corresponds to the bijection\footnote{Note that this bijection
only agrees with $\varphi$ up to  scalar multiples.}
$$ \Phi'\leftrightarrow\Phi,\quad \begin{cases}\pm\epsilon'_i\pm\epsilon'_j\leftrightarrow\pm\epsilon_i\pm\epsilon_j;\\
                      \pm\epsilon'_i\leftrightarrow\pm 2\epsilon_i,\end{cases}$$
for $1\leq i,j\leq r$. For example, observe that
$$\varphi(\varpi'_i)=\begin{cases} 2\varpi_i,\quad 1\leq i<r;\\
\varpi_i,\quad i=r.\end{cases}         \eqno{(*)}            $$
Under this correspondence long (resp., short) roots correspond to short (resp. long) roots.
Also, if $\alpha'\leftrightarrow\alpha$ in (*) with $\alpha'$ short (resp., long), then
$\varphi(\alpha') =\alpha$ (resp., $\varphi(\alpha')=2\alpha$).

The special isomorphism $\varphi$ corresponds to a (special) isogeny $\theta:G\to G'$.  Similarly, there is a (special) isogeny $\theta':G'\to G$ defined by
 the special isomorphism $\varphi':X^{\mathbb Q}(T)\to X^{\mathbb Q}(T')$ which maps each $\epsilon_i$ to $\epsilon'_i$, $1\leq i\leq r$. Regarding $G^{\prime\prime}$ as a subgroup of $G'$, let $\overline{G}_1^{\prime\prime}$ be the scheme-theoretic image
of $G_1^{\prime\prime}$ in the infinitesimal subgroup scheme $G_1$ of $G$ under $\theta'$. Then $\overline{G}^{\prime\prime}_1\cong G_1^{\prime\prime}/\kappa$,
where $\kappa$ is a closed (infinitesimal) subgroup of $T^{\prime\prime}$.  Also, $\overline{G}_1^{\prime\prime}={\text{\rm Ker}}(\theta)$.

 If $\lambda\in\sum_{i=1}^r a_i\varpi_i\in X(T)$, let $\lambda_{\sigma}=\sum_{i=1}^{r-1}a_i\varpi_i\in X(T)$ and $\lambda_\tau=a_r\varpi_r\in X(T)$.
 There exists a unique $\tilde\lambda_\sigma\in X(T')^+$ (resp., $\tilde\lambda_\tau\in X(T')^+$) such that $\varphi(\tilde\lambda_\sigma)=2\lambda_\sigma^{(1)}$
 (resp., $\varphi(\tilde\lambda_\tau)=\lambda_\tau$). Define (somewhat abusing our previous notation)
 \begin{equation}\label{tilde}\tilde\lambda^{(1)}:=\tilde\lambda_\sigma+\tilde\lambda_\tau\in X(T')^+.\end{equation}

Now we return to the general simple group $G$ in the first part of the following result.
\begin{prop}\label{shifting} Let $\lambda,\nu\in X^+$, and suppose that $\lambda_0=\nu_0$.

(a) $\Ext^1_G(L(\lambda),L(\nu))\cong\Ext^1_G(L(\lambda^{(1)}), L(\nu^{(1)}))$ (in the notation of (\ref{padicexpansion}))
unless $p=2$ and $G$ has type $C_r$.

(b) Suppose  $G$ has type $C_r$ and $p=2$.
If $r>2$, then
$$\Ext^1_G(L(\lambda),L(\nu))\cong\Ext^1_{G'}(L(\tilde\lambda^{(1)}),L(\tilde\nu^{(1)})).$$
 \end{prop}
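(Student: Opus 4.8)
The plan is to reduce both parts to the standard fact that, for a rational $G$-module $V$ and the infinitesimal subgroup $G_1$, there is a five-term exact sequence coming from the Lyndon--Hochschild--Serre spectral sequence of $G_1\lhd G$, together with a careful analysis of $\Ext^1$ over $G_1$ of the relevant irreducibles. For part (a), write $\lambda=\lambda_0+p\lambda^{(1)}$ and $\nu=\nu_0+p\nu^{(1)}$, so that (by Steinberg) $L(\lambda)\cong L(\lambda_0)\otimes L(\lambda^{(1)})^{(1)}$ and similarly for $\nu$, with $\lambda_0=\nu_0$ by hypothesis. The inflation-restriction sequence gives
\begin{equation*}
0\to H^1(G/G_1,\Hom_{G_1}(L(\lambda),L(\nu)))\to \Ext^1_G(L(\lambda),L(\nu))\to H^0(G/G_1,\Ext^1_{G_1}(L(\lambda),L(\nu))).
\end{equation*}
Now $L(\lambda)|_{G_1}\cong L(\lambda_0)^{\oplus\dim L(\lambda^{(1)})}$ and likewise for $\nu$, since the Frobenius twist is trivial on $G_1$; since $\lambda_0=\nu_0$, the $G_1$-socle and head of both restrictions are isotypic of the same type $L(\lambda_0)$. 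Hence $\Hom_{G_1}(L(\lambda),L(\nu))\cong \Hom_k(L(\lambda^{(1)})^{(1)},L(\nu^{(1)})^{(1)})$ as a $G/G_1\cong G$-module, i.e. $\cong \Hom_k(L(\lambda^{(1)}),L(\nu^{(1)}))^{(1)}$, whose $H^1$ over $G$ is $\Ext^1_G(L(\lambda^{(1)}),L(\nu^{(1)}))$. The main point is to kill the third term: by Theorem \ref{andersentheorem}, outside the excluded case $\Ext^1_{G_1}(L(\lambda_0),L(\lambda_0))=0$, and since $\Ext^1_{G_1}(L(\lambda),L(\nu))$ is built from $\Ext^1_{G_1}(L(\lambda_0),L(\lambda_0))$ twisted by the exterior Hom of the Frobenius-twisted parts, it also vanishes. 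This forces the inflation map to be an isomorphism, which is exactly (a).

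For part (b), the same LHS strategy is used but now $\Ext^1_{G_1}(L(\lambda_0),L(\lambda_0))$ need not vanish, so instead of quotienting by $G_1$ we must pass to a subgroup where self-extensions again disappear. The idea (following the setup built up in the paragraphs preceding the Proposition) is to use the special isogeny $\theta\colon G\to G'$ with kernel $\overline{G}_1^{\prime\prime}$, the scheme-theoretic image of $G_1^{\prime\prime}$ — a $D_r$-type infinitesimal group modulo an infinitesimal torus subgroup. One then runs the LHS spectral sequence for the normal subgroup $\overline{G}_1^{\prime\prime}\lhd G$ (rather than $G_1\lhd G$), noting $G/\overline{G}_1^{\prime\prime}\cong G'$ via $\theta$. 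Because $\overline{G}_1^{\prime\prime}$ is (a quotient of) the first infinitesimal subgroup of a type-$D_r$ group, which is not of type $C$, Theorem \ref{andersentheorem} applies to $G_1^{\prime\prime}$-self-extensions; the hypothesis $r>2$ is what guarantees $D_r$ is a genuine (and non-exceptional) root system to which that vanishing theorem applies, and also what makes the index computation $|X(T^{\prime\prime})/Q(T^{\prime\prime})|=4$ valid so that $G^{\prime\prime}$ is simply connected. One must check that the $\overline{G}_1^{\prime\prime}$-restrictions of $L(\lambda)$ and $L(\nu)$ are isotypic of the same irreducible type — this is where the hypothesis $\lambda_0=\nu_0$ re-enters, translated through $\varphi$ — so that the third spectral-sequence term again vanishes and the bottom Hom-term has the shape $\Hom_k(L(\tilde\lambda^{(1)}),L(\tilde\nu^{(1)}))$ as a $G'$-module, using the definition (\ref{tilde}) of $\tilde\lambda^{(1)}$. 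Taking $H^1$ over $G'$ yields $\Ext^1_{G'}(L(\tilde\lambda^{(1)}),L(\tilde\nu^{(1)}))$.

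The step I expect to be the main obstacle is the bookkeeping in part (b): identifying, as a $G'$-module, exactly which irreducible $L(\tilde\lambda^{(1)})$ arises when one restricts $L(\lambda)$ to $\overline{G}_1^{\prime\prime}$ and then re-reads the ``quotient'' part through $\theta$. The subtlety is that the special isogeny $\theta$ does not behave like an ordinary Frobenius: it contracts short root subgroups differently from long ones, so the usual Steinberg-type tensor factorization has to be replaced by the modified factorization encoded in splitting $\lambda$ into $\lambda_\sigma$ (the part on $\varpi_1,\dots,\varpi_{r-1}$) and $\lambda_\tau$ (the $\varpi_r$-part) and applying $\varphi$ with the asymmetric scaling in $(*)$. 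Verifying that $\Ext^1_{\overline{G}_1^{\prime\prime}}(L(\lambda),L(\nu))^{G'}=0$ requires knowing that $\overline{G}_1^{\prime\prime}\cong G_1^{\prime\prime}/\kappa$ with $\kappa$ inside the torus does not introduce new self-extensions — i.e. that quotienting an infinitesimal group by an infinitesimal \emph{multiplicative} subgroup (whose representation category is semisimple) preserves the relevant $\Ext^1$-vanishing — plus a comparison of the module categories of $G_1^{\prime\prime}$ and its central quotient. Once those identifications are pinned down, the spectral sequence argument is formally identical to part (a).
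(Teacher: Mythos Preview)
Your proposal is correct and follows essentially the same approach as the paper: both parts are handled via the five-term Hochschild--Serre sequence, with the normal subgroup $G_1$ in (a) and $\overline{G}_1^{\prime\prime}=\ker\theta$ in (b), and the third term is killed using Theorem~\ref{andersentheorem} (for $G$ in (a), for the simply connected type-$D_r$ group $G^{\prime\prime}$ in (b)). The paper's proof of (b) carries out exactly the bookkeeping you anticipate, decomposing $L(\lambda)=L(\lambda_\sigma)\otimes L(\lambda_\tau)$ via \cite{Steinberg63}, noting $L(\lambda_\tau)$ is $\overline{G}_1^{\prime\prime}$-trivial, and reducing $\Ext^1_{\overline{G}_1^{\prime\prime}}(L(\lambda),L(\nu))$ to $\Ext^1_{\overline{G}_1^{\prime\prime}}(L(\lambda_{\sigma,0}),L(\nu_{\sigma,0}))\otimes X=0$; your concern about the central quotient by $\kappa$ is handled implicitly since the relevant irreducibles for $G_1^{\prime\prime}$ descend to $\overline{G}_1^{\prime\prime}$.
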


\begin{proof} (a) is proved in \cite[Lem. 7.1]{CPS7}, where it is remarked that it essentially contained in \cite{A1}. We now show (b). If $\lambda_0=\nu_0$, then $\lambda_{\sigma,0}=\nu_{\sigma,0}$. Also, using
\cite[Thm. 11.1]{Steinberg63} as well as the tensor product theorem,
$L(\lambda)=L(\lambda_{\sigma})\otimes L(\lambda_{\tau})$ and $L(\nu)=L(\nu_{\sigma})\otimes L(\nu_\tau)$. Let $M=L(\lambda)^*\otimes L(\nu)$. There is a
Hochschild-Serre exact sequence
\begin{equation}\label{HochSerre}
 0\to \opH^1(\overline{G}^{\prime\prime}_1,M)^{G/{{\overline G}^{\prime\prime}_1}}\to \opH^1(G,M) \to \opH^1(G/{\overline G}^{\prime\prime}_1,M^{{\overline G}^{\prime\prime}_1}).\end{equation}
But $L(\lambda_\tau)$ is a trivial ${\overline G}^{\prime\prime}_1$-module \cite[Thm. 11.1]{Steinberg63}. If
$X:= L(2\lambda_{\sigma}^{(1)})^*\otimes L(\lambda_\tau)^*\otimes L(2\nu_\sigma^{(1)})\otimes L(\nu_\tau),$
then
$ \opH^1({\overline G}^{\prime\prime}_1,M) \cong \Ext^1_{{\overline G}^{\prime\prime}_1}(L(\lambda),L(\nu))
\cong  \Ext^1_{{\overline G}^{\prime\prime}_1}(L(\lambda_{\sigma,0}), L(\nu_{\sigma,0}))\otimes X
=0
$
by Theorem \ref{andersentheorem}, since $\lambda_{\sigma,0}=\nu_{\sigma,0}$, $L(\lambda_{\sigma,0})$ is an irreducible ${\overline G}^{\prime\prime}_1$-module, and $G^{\prime\prime}$ is simply connected of type $D_r$ with $r>2$. Thus,
$
\Ext^1_G(L(\lambda),L(\nu)) \cong \opH^1(G/{\overline G}^{\prime\prime}_1,M^{{\overline G}^{\prime\prime}_1})\cong \opH^1(G',M^{{\overline G}^{\prime\prime}_1}).$
However, $M^{{\overline G}^{\prime\prime}_1}\cong\Hom_{{\overline G}^{\prime\prime}_1}(L(\lambda),L(\nu))\cong \Hom(L(\tilde\lambda^{(1)}),L(\tilde\nu^{(1)}))$, so the result follows.\end{proof}

\section{Connections with quantum enveloping algebras and Kazhdan-Lusztig polynomials}

If $x\in \mathbb E$, the point-stabilizer $(W_a)_x$ for the dot action of $W_a$ on $\mathbb E$ is isomorphic
to a finite parabolic subgroup of $W_a$, so
\begin{equation}\label{maximalorder}
{\text{\rm max}}_{x\in\mathbb E} |(W_a)_x|=|W| <\infty.
\end{equation}
Given $x,y\in W_a$, $\mu(x,y)$ denotes, as in \S2, the coefficient of $q^{(\ell(y)-\ell(x)-1)/2}$ in
the Kazhdan-Lusztig polynomial $P_{x,y}$ for the Coxeter system $(W_a,S_a)$ if $x<y$. If $x>y$, $\mu(x,y):=\mu(y,x)$, and
if $x=y$, then $\mu(x,y)=0$.

\begin{lem}\label{Ebound} There exists a positive integer $E(\Phi)$ such that $\mu(x,y)\leq E(\Phi)$ for all $x,y\in W^+_a$.
\end{lem}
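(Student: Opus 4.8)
The statement to prove is Lemma \ref{Ebound}: there exists $E(\Phi)$ bounding $\mu(x,y)$ over all \emph{dominant} $x,y\in W_a^+$. The key observation is that the dominant elements of $W_a$ are exactly the maximal-length right coset representatives $w_0x'$, where $x'$ ranges over minimal-length (distinguished) right coset representatives of $W$ in $W_a$; equivalently, $W_a^+$ is a full set of right coset representatives. I would recast the $\mu$-coefficients for such pairs in terms of the \emph{parabolic} Kazhdan-Lusztig polynomials $P^{J}_{x',y'}$ (or $P^{w,-1}_{x',y'}$) for the pair $(W_a,W)$, where $x',y'$ are the distinguished representatives. There is a standard identity (Deodhar, or Soergel) expressing the ordinary Kazhdan-Lusztig polynomial $P_{w_0x',w_0y'}$ (equivalently, $P_{x'w_0,y'w_0}$ after passing to the opposite parabolic) and its top coefficient in terms of the parabolic one, and the combinatorics of minimal coset representatives. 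The upshot is that the $\mu$-values for dominant pairs are controlled by the geometry of alcoves in the dominant cone, which is where one expects uniform boundedness.

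**How I'd get the bound.** Translate the problem into the alcove/representation-theoretic picture. By the earlier discussion, a dominant $x\in W_a^+$ corresponds to a dominant alcove $x\cdot C^-$, i.e. a dominant weight $x\cdot\lambda^-$ for $\lambda^-\in C^-$ regular. The top coefficient $\mu(x,y)$ for dominant $x,y$ is precisely the quantity that, under the Lusztig character formula / Kazhdan-Lusztig conjecture for affine Weyl groups (now a theorem in the relevant settings, or for the quantum group $U_\zeta$ unconditionally via \cite{KL}-type results), computes $\dim\Ext^1$ between irreducible modules $L_\zeta(x\cdot\lambda^-)$ and $L_\zeta(y\cdot\lambda^-)$ with \emph{regular} highest weights, or equivalently $[\operatorname{rad}\Delta_\zeta/\operatorname{rad}^2 : L_\zeta]$-type multiplicities. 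The plan is then to invoke a known uniform bound on such multiplicities: for a fixed root system $\Phi$, the number and structure of composition factors in the layers of the radical filtration of a Weyl module $\Delta_\zeta(\lambda)$, restricted to those contributing to the top $\mu$-coefficient, is governed by local data near the facet of $\lambda$, and the relevant combinatorial quantities (e.g. the size of the finite parabolic $(W_a)_x$, bounded by $|W|$ as in \eqref{maximalorder}, together with finiteness of the set of "patterns" of KL polynomials up to a bounded Bruhat distance) are all controlled by $\Phi$ alone.

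**The main obstacle.** The genuine difficulty is that, a priori, the Bruhat distance $\ell(y)-\ell(x)$ between dominant $x,y$ with $\mu(x,y)\neq 0$ is unbounded, so one cannot simply say "finitely many cases." The crux is to show that a large $\mu(x,y)$ between dominant elements forces, via translation to a wall or via the combinatorics of the $\cdot_l$-action, a correspondingly large KL-polynomial coefficient elsewhere — ultimately contradicting either a known finiteness (e.g. that KL polynomials stabilize, or that the relevant $\Ext^1$ between quantum-group irreducibles is bounded by a combinatorial quantity that is periodic under the translation lattice $Q$). Concretely, I would exploit periodicity: the affine Weyl group $W_a\cong W\ltimes Q$ acts by translations, and KL polynomials $P_{x,y}$ for $(W_a,S_a)$ — and hence their top coefficients — are known to be "translation invariant" in the sense that $P_{x,y}$ depends only on the relative position of the alcoves $x\cdot C^-$, $y\cdot C^-$ when these are far from the walls, and there are only finitely many relative positions within any bounded region. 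Combining this with the restriction to the \emph{dominant} cone (so that one stays within a single chamber, eliminating the need to worry about reflection through walls) yields a finite list of possible local configurations, each giving a finite value, whence the bound $E(\Phi)$. Making the "translation invariance of $\mu$" precise — citing the appropriate result on periodic KL polynomials or reducing to the finitely many $\mu$-values that occur for the affine Weyl group within a fundamental domain for the $Q$-action on pairs of alcoves — is the technical heart, and I expect it is exactly what the authors' proof will do, likely by a direct appeal to results of Lusztig or of \cite{CPS7} on the structure of affine KL polynomials.
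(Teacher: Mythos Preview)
The paper does not prove this lemma in place: it cites \cite[Lem.~7.6]{CPS7} for a representation-theoretic proof (with the explicit bound $E(\Phi)=h^{|\Phi|}\mathfrak{P}(2(h-1)\rho)$), cites \cite{SX} for a combinatorial proof, and defers a third proof to \S6 (Theorem~\ref{bounding KL coefficients} with $m=1$). So the comparison is with those.

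Your representation-theoretic reformulation---that for dominant $x,y$ and regular $\lambda^-$ one has $\mu(x,y)=\dim\Ext^1_{U_\zeta}(L_\zeta(x\cdot\lambda^-),L_\zeta(y\cdot\lambda^-))$---is correct and is exactly the starting point of both the \cite{CPS7} and the \S6 arguments. The gap is that you never actually bound this $\Ext^1$. Instead you pivot to a periodicity/translation-invariance heuristic for affine Kazhdan--Lusztig polynomials which you yourself flag as the unproved ``technical heart''. The statement you would need---that $\mu(x,y)$ depends only on some ``relative position'' of the two alcoves, with only finitely many such positions occurring---is not available in that form; Lusztig's periodicity results apply only deep inside the dominant cone and leave the behaviour near walls to be handled separately. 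Your parabolic-KL detour is also left undeveloped.

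The paper's approaches bypass periodicity altogether. In \S6 one fixes a \emph{single} $l>h$, sets $M=\bigoplus_i L_\zeta(\lambda_i)$ with $\lambda_i$ running over the (finitely many) $l$-restricted alcoves, and observes that the $U(\mathfrak g_{\mathbb C})$-module $M':=\Ext^1_{u_\zeta}(M,M)^{(-1)}$ is finite dimensional, since $u_\zeta$ is a finite-dimensional algebra. The quantum Steinberg tensor product together with the Hochschild--Serre identification $\Ext^1_{U_\zeta}\cong(\Ext^1_{u_\zeta})^{U(\mathfrak g_{\mathbb C})}$ and Lemma~\ref{boundinglemma} then give $\dim\Ext^1_{U_\zeta}(L_\zeta(\lambda),L_\zeta(\nu))\leq\dim M'$ uniformly in $\lambda,\nu$, whence $\mu(x,y)\leq\dim M'$. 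The \cite{CPS7} argument is in the same spirit but bounds the $\Ext^1$ by a weight-space count in a Steinberg tensor module (the quantum analogue of Lemma~\ref{Lemma52} here), yielding the explicit numerical bound quoted. Neither argument uses KL periodicity; both exploit instead that, once $l$ is fixed, the small quantum group is finite dimensional and only finitely many restricted weights occur.
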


For a representation theory proof, see \cite[Lemma 7.6]{CPS7}, and for a combinatoric proof, see
\cite{SX}. In fact,  \cite{CPS7} shows that
$E(\Phi)=h^{|{\Phi|}}{\mathfrak P}(2h-2\rho)$ works, where $\mathfrak P$ is the Kostant partition function. For another proof in a more general context, see  \S6 below.

 The integers $\mu(x,y)$ in the following result are computed in the Coxeter group $W_{a,l}$.

\begin{lem}\label{quiver}({\text{\rm \cite[(1.5.3)]{CPS7}}}) Given $\lambda,\nu\in \Xregl$ which are $W_{a,l}$-conjugate (under the dot
action), write $\lambda=w\cdot\lambda^-$ and $\nu=y\cdot\lambda^-$ for $\lambda^-\in C^-_l$ and $w,y\in W_{a,l}$. Then
$$\begin{cases}
 \dim\Ext^1_{U_\zeta}(L_\zeta(x\cdot\lambda^-),L_\zeta(y\cdot\lambda^-))
 =\dim\Ext^1_{U_\zeta}(L_\zeta(y\cdot\lambda^-),L_\zeta(x\cdot\lambda^-))
 =\mu(y,x);\\
 \dim\Ext^1_{U_\zeta}(L_\zeta(x\cdot\lambda^-),\nabla_\zeta(y\cdot\lambda^-))=\dim\Ext^1_{U_\zeta}(\Delta_\zeta(y\cdot\lambda^-),
 L_\zeta(x\cdot\lambda^-))=\mu(y,x).\end{cases}.$$
In case $\lambda,\nu$ are not $W_{a,l}$-liked, these $\Ext^1$-groups all vanish.
 \end{lem}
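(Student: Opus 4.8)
The plan is to reduce everything to known facts about Kazhdan--Lusztig polynomials and the linkage principle for $U_\zeta$, using the translation principle to move from regular to (possibly singular) alcoves. First I would invoke the linkage principle for $U_\zeta$-mod: if $\lambda$ and $\nu$ are not $W_{a,l}$-linked under the dot action, then $L_\zeta(\lambda)$ and $L_\zeta(\nu)$ lie in different blocks, so every $\Ext^n_{U_\zeta}$ between them vanishes; this immediately handles the last sentence of the statement. Likewise $\Delta_\zeta(\nu)$ and $\nabla_\zeta(\nu)$ lie in the same block as $L_\zeta(\nu)$, so the mixed $\Ext^1$-groups also vanish in the non-linked case.

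For the linked case, the strategy is to pass through $u_\zeta$-cohomology via the Hochschild--Serre identification (\ref{HochSerre}), $\Ext^n_{U_\zeta}(M,N)\cong\Ext^n_{u_\zeta}(M,N)^{U({\mathfrak g}_{\mathbb C})}$, but the cleaner route (and the one I expect the authors take) is to cite the known computation of $\Ext^1$ between irreducibles for quantum groups at roots of unity in terms of affine Kazhdan--Lusztig combinatorics. Concretely: since $\lambda=x\cdot\lambda^-$ and $\nu=y\cdot\lambda^-$ with $\lambda^-\in C^-_l$ regular and $x,y\in W_{a,l}$, the validity of the Lusztig character formula $\ch L_\zeta(\mu)=\chi_{\KL}(\mu,l)$ on the regular linkage class (stated in \S2, holding for $l>h$) gives, by the standard strong linkage / Andersen--Jantzen--Soergel or Kazhdan--Lusztig translation argument, that the first extension group between the irreducible heads of two standard modules in a regular block is governed by the $\mu$-function: $\dim\Ext^1_{U_\zeta}(L_\zeta(x\cdot\lambda^-),L_\zeta(y\cdot\lambda^-))=\mu(y,x)$. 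I would reference \cite[(1.5.3)]{CPS7} (as the statement itself does) and the surrounding setup there, noting that the symmetry $\mu(y,x)=\mu(x,y)$ gives the claimed symmetry of the $\Ext^1$-dimension, and that $\Ext^1_{U_\zeta}(L_\zeta,L_\zeta')\cong\Ext^1_{U_\zeta}(L_\zeta',L_\zeta)^*$ by the duality $M\mapsto M^*$ on $U_\zeta$-mod (which fixes irreducibles up to the $-w_0$ twist but preserves dimensions of $\Ext^1$ between a pair).

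For the mixed equalities $\dim\Ext^1_{U_\zeta}(L_\zeta(x\cdot\lambda^-),\nabla_\zeta(y\cdot\lambda^-))=\dim\Ext^1_{U_\zeta}(\Delta_\zeta(y\cdot\lambda^-),L_\zeta(x\cdot\lambda^-))=\mu(y,x)$, I would use the quasi-hereditary (highest weight) structure of $U_\zeta$-mod: the long exact sequence obtained from $0\to M\to\nabla_\zeta(y\cdot\lambda^-)\to$ (costandard filtration quotient) applied with the vanishing $\Ext^1_{U_\zeta}(L_\zeta(x\cdot\lambda^-),\nabla_\zeta(\mu))=0$ for $x\cdot\lambda^-\not\leq\mu$, together with $\Hom_{U_\zeta}(L_\zeta(x\cdot\lambda^-),\nabla_\zeta(\mu))=\delta$ concentrated at $\mu=x\cdot\lambda^-$, reduces the computation of $\Ext^1(L_\zeta(x\cdot\lambda^-),\nabla_\zeta(y\cdot\lambda^-))$ to reading off the multiplicity of $L_\zeta(x\cdot\lambda^-)$ in $\rad\nabla_\zeta(y\cdot\lambda^-)/\rad^2$; by the Kazhdan--Lusztig theory (LCF on the block) this top-layer multiplicity is exactly $\mu(y,x)$. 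The dual argument with $\Delta_\zeta$ gives the second equality. The only genuine obstacle is bookkeeping: matching the indexing conventions for $P_{y,x}$ versus $P_{x,y}$ and the role of $W^+_{a,l}$ versus $W_{a,l}$ (the dominance condition), and confirming that the LCF hypothesis needed is exactly ``$l>h$ and regular,'' which is in force here. Since all of this is essentially \cite[(1.5.3)]{CPS7}, the proof in the paper should be a short citation with these reductions indicated; I would write it that way rather than reprove the Kazhdan--Lusztig input.
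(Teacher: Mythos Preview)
Your anticipation is exactly right: the paper gives no proof of this lemma at all, simply stating it with the attribution to \cite[(1.5.3)]{CPS7}. Your sketch of the underlying reasoning (linkage principle for the non-linked case, LCF/Kazhdan--Lusztig input on the regular block for the $\mu$-function identification) is the correct background, though one small quibble: the identity $\dim\Ext^1_{U_\zeta}(L_\zeta(x\cdot\lambda^-),\nabla_\zeta(y\cdot\lambda^-))=\mu(y,x)$ comes more directly from the Kazhdan--Lusztig polynomial interpretation recorded in Remark~\ref{quantumcoho} (degree-one coefficient of $p_{y,x}$) than from a ``second radical layer of $\nabla$'' argument, which is really the route to $\Ext^1$ between two irreducibles.
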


\begin{rem}\label{quantumcoho} More generally, \cite[Thm. 3.5]{CPS1} proves that
$$\begin{aligned} \dim & \Ext^n_{U_\zeta}  (L_\zeta(x\cdot\lambda^-),L_\zeta(y\cdot\lambda^-)) =\\
& \sum_{z\in W^+_{a,l}}\sum_{a+b=n}\dim\Ext^a_{U_\zeta}(L(x\cdot\lambda^-),\nabla_\zeta(z\cdot\lambda^-))\dim\Ext^b_{U_\zeta}(\Delta_\zeta(z\cdot\lambda^-),
L_\zeta(x\cdot\lambda^-)).\end{aligned}$$
  Of course, $\Ext^a_{U_\zeta}(L(x\cdot\lambda^-),\nabla_\zeta(z\cdot\lambda^-))=0$
unless $z\leq x$, and $\Ext^b_{U_\zeta}(\Delta_\zeta(z\cdot\lambda^-),
L_\zeta(x\cdot\lambda^-))=0$ unless $z\leq y$. Letting
$$p_{z,x}:=\sum_{n\geq 0} \Ext^n_{U_\zeta}(L_\zeta(x\cdot\lambda^-),\nabla_\zeta(z\cdot\lambda^-))t^n
=\sum_{n\geq 0}\Ext^n_{U_\zeta}(\Delta_\zeta(z\cdot\lambda^-),L_\zeta(z\cdot\lambda^-))t^n,$$
$t^{\ell(x)-\ell(z)}\bar p_{z,x}$ is the Kazhdan-Lusztig polynomial $P_{z,x}$  (for $W_{a,l}$) in $q=t^2$. (Here $\bar p_{z,x}$ is the polynomial in $t^{-1}$
obtained by replacing each power $t^i$ by $t^{-i}$ in $p_{z,x}$.)
\end{rem}

  Let $G$ be as in \S2(1). For $\lambda\in X^+$, we will make use of four additional $G$-modules: $\rDelta(\lambda)$, $\rnabla(\lambda)$,
  $\Delta^p(\lambda)$, and $\nabla_p(\lambda)$ defined as follows. Let $\lambda=\lambda_0 + p\lambda^\dagger$ as after
    (\ref{padicexpansion}).  For $l\geq h$, the module $\rDelta(\lambda)$ (resp., $\rnabla(\lambda)$)
  is defined to be the reduction modulo $p$ of the $U_\zeta$-irreducible module $L_\zeta(\lambda)$ (for $l=p)$ with respect to
  a minimal (resp., maximal) lattice. When the LCF formula
  holds for all $p$-restricted dominant weights,
  \begin{equation}\label{cline}\begin{cases} \rDelta(\lambda)\cong L(\lambda_0)\otimes\Delta(\lambda^\dagger)^{(1)}; \\
\rnabla(\lambda)\cong L(\lambda_0)\otimes\nabla(\lambda^\dagger)^{(1)}.\end{cases}\end{equation}
    It is not necessary to go into details here.\footnote{ These modules were first introduced by Lusztig \cite{L1}, and then studied
by Lin \cite{Lin}.}
In general, we define
\begin{equation}\label{pweylandpinduced}
\begin{cases} \Delta^p(\lambda)=L(\lambda_0)\otimes\Delta(\lambda^\dagger)^{(1)};\\
\nabla_p(\lambda)=L(\lambda_0)\otimes\nabla(\lambda^\dagger)^{(1)}.\end{cases}\end{equation}
It is easy to see that $\rDelta(\lambda)$ and $\Delta^p(\lambda)$ have head $L(\lambda)$, and
$\rnabla(\lambda)$ and $\nabla_p(\lambda)$ have socle $L(\lambda)$.

Now \cite[Thm. 5.4]{CPS7} gives
\begin{lem}\label{ExtforG} Assume $p>h$ and that the LCF holds for $G$ on an ideal of $p$-regular weights containing the
$p$-regular $p$-restricted weights.

(a) If $\lambda=w\cdot\lambda^-,\nu=y\cdot\lambda^-$, with $\lambda^-\in C^-_p$ and $y\leq w$ in $W_{a,p}$,  then
$$\dim\Ext^1_G(\rDelta(\lambda),\nabla(\nu))=\dim\Ext^1_G(\Delta(\nu),\rnabla(\lambda))=\mu(x,y).$$
Furthermore, if either $\lambda,\nu$ are not $W_{a,p}$-conjugate or if they are conjugate but $y\not\leq w$, then
these $\Ext^1$-groups vanish.

(b) If $\lambda=w\cdot\lambda^-,\nu=y\cdot\lambda^-$, with $\lambda^-\in C^-_p$ and $w,y\in W_{a,p}$, then
$$\dim\Ext^1_G(\rDelta(\lambda),\rnabla(\nu))=\mu(x,y).$$
\end{lem}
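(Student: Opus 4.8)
The plan is to transport the whole computation to the quantum enveloping algebra $U_\zeta$ at $\zeta=\sqrt[p]{1}$, where the corresponding dimensions are already recorded in Lemma~\ref{quiver} (stated above as \cite[(1.5.3)]{CPS7}), and then to verify that the ``reduction mod $p$'' passage from the $U_\zeta$-modules $L_\zeta(\lambda),\Delta_\zeta(\nu),\nabla_\zeta(\nu)$ to the $G$-modules $\rDelta(\lambda),\Delta(\nu),\rnabla(\lambda)$ leaves the relevant $\Ext^1$-dimension unchanged. Concretely, for (b) I would prove
$$\dim\Ext^1_G(\rDelta(\lambda),\rnabla(\nu))=\dim\Ext^1_{U_\zeta}(L_\zeta(\lambda),L_\zeta(\nu)),$$
and for (a) the two identities $\dim\Ext^1_G(\rDelta(\lambda),\nabla(\nu))=\dim\Ext^1_{U_\zeta}(L_\zeta(\lambda),\nabla_\zeta(\nu))$ and $\dim\Ext^1_G(\Delta(\nu),\rnabla(\lambda))=\dim\Ext^1_{U_\zeta}(\Delta_\zeta(\nu),L_\zeta(\lambda))$. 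Granting these, Lemma~\ref{quiver} evaluates each right-hand side as the Kazhdan--Lusztig coefficient $\mu(w,y)$ (writing $\lambda=w\cdot\lambda^-$, $\nu=y\cdot\lambda^-$), and at the same time supplies the asserted vanishing whenever $\lambda$ and $\nu$ fail to be $W_{a,p}$-conjugate.

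For the comparison I would run two parallel Hochschild--Serre spectral sequences, one for the normal subgroup $G_1\lhd G$ on the algebraic side and one for $u_\zeta\lhd U_\zeta$ on the quantum side (the latter being the isomorphism $\Ext^\bullet_{U_\zeta}(M,N)\cong\Ext^\bullet_{u_\zeta}(M,N)^{G_{\mathbb C}}$ recorded in \S2). By the LCF hypothesis and (\ref{cline}), $\rDelta(\lambda)\cong L(\lambda_0)\otimes\Delta(\lambda^\dagger)^{(1)}$ and $\rnabla(\nu)\cong L(\nu_0)\otimes\nabla(\nu^\dagger)^{(1)}$, while the quantum Steinberg tensor product theorem gives $L_\zeta(\lambda)\cong L_\zeta(\lambda_0)\otimes L_{\mathbb C}(\lambda^\dagger)^{(1)}$; the Frobenius-twisted factor is fixed by $G_1$ (resp. $u_\zeta$) and pulls out of the inner $\Ext$-group. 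In total degree $1$ this leaves a $j=0$ term built from $\Hom_{G_1}(L(\lambda_0),L(\nu_0))$ and a $j=1$ term built from $\Ext^1_{G_1}(L(\lambda_0),L(\nu_0))$, and likewise on the quantum side with $G_{\mathbb C},L_{\mathbb C}(\lambda^\dagger),u_\zeta$ in place of $G,\Delta(\lambda^\dagger),G_1$. When $\lambda_0\ne\nu_0$ the entire $j=0$ row vanishes on both sides since $\Hom_{G_1}(L(\lambda_0),L(\nu_0))=0=\Hom_{u_\zeta}(L_\zeta(\lambda_0),L_\zeta(\nu_0))$, so each $\Ext^1$ equals its $j=1$ term; these then agree once one knows $\Ext^\bullet_{G_1}(L(\lambda_0),L(\nu_0))\cong\Ext^\bullet_{u_\zeta}(L_\zeta(\lambda_0),L_\zeta(\nu_0))^{(1)}$ as $G$- (equivalently $G_{\mathbb C}$-) modules and then applies $\Hom$ against $L(\lambda^\dagger)$, using complete reducibility of $\fg_{\mathbb C}$-modules on the quantum side and a good-filtration property of $\Ext^1_{G_1}(L(\lambda_0),L(\nu_0))$ on the algebraic side. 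When $\lambda_0=\nu_0$, the identity $\Hom_{G_1}(L(\lambda_0),L(\nu_0))=k$ turns the $j=0$ term into $\Ext^1_G(\Delta(\lambda^\dagger),\nabla(\nu^\dagger))=0$, while $\Ext^1_{G_1}(L(\lambda_0),L(\lambda_0))=0$ by Theorem~\ref{andersentheorem} (legitimate since $p>h$ forces $p\ne2$ when $G$ has type $C_r$) kills the $j=1$ term; the quantum side vanishes for the matching two reasons (semisimplicity of $\fg_{\mathbb C}$-modules and the quantum analogue of Theorem~\ref{andersentheorem}). In every case the comparison for (b) holds, and Lemma~\ref{quiver} returns $\mu(w,y)$.

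Part (a) I would treat in the same spirit, now comparing $\rDelta(\lambda)$ with the honest costandard $\nabla(\nu)$ and $\Delta(\nu)$ with $\rnabla(\lambda)$. Under the LCF, $\nabla(\nu)$ has a filtration with sections $\rnabla(\sigma)$ and $\Delta(\nu)$ one with sections $\rDelta(\sigma)$ (Lin's theorem), and $\nabla(\nu),\Delta(\nu)$ are reductions mod $p$ of $\nabla_\zeta(\nu),\Delta_\zeta(\nu)$; so one may either run the reduction-mod-$p$ comparison directly against the quantum costandard/standard modules, or splice the long exact $\Ext$-sequences of those filtrations together with part (b), the LCF multiplicities and the reciprocity $\Hom_G(\rDelta(\sigma),\rnabla(\tau))=\delta_{\sigma\tau}k$ forcing the contributions to collapse to the single coefficient produced by the mixed formula in Lemma~\ref{quiver}. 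For the vanishing statements: if $\lambda,\nu$ lie in different $W_{a,p}$-linkage classes, then $\rDelta(\lambda)$ (head $L(\lambda)$), $\rnabla(\lambda)$ (socle $L(\lambda)$), $\Delta(\nu)$ and $\nabla(\nu)$ all lie in distinct blocks, so the strong linkage principle kills every $\Ext^1$-group in (a) and (b); and when $\lambda=w\cdot\lambda^-$, $\nu=y\cdot\lambda^-$ are linked but $y\not\le w$, one uses that under the LCF the composition factors of $\rDelta(\lambda)$ are exactly the $L(z\cdot\lambda^-)$ with $z\le w$, so a head/socle argument against $\nabla(\nu)$ (socle $L(y\cdot\lambda^-)$) precludes a non-split extension --- or, equivalently, one transfers to $U_\zeta$ and quotes the corresponding vanishing in Lemma~\ref{quiver}.

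The step I expect to be the genuine obstacle is the $\Ext^1$-comparison between $G$ and $U_\zeta$. Base change (upper semicontinuity of cohomology in a flat family) gives only the inequality $\dim\Ext^1_G\ge\dim\Ext^1_{U_\zeta}$ for free; equality is precisely where the hypotheses $p>h$ and ``LCF on an ideal containing the $p$-regular $p$-restricted weights'' do real work. They are what make the restricted-level comparison $\Ext^\bullet_{G_1}(L(\lambda_0),L(\nu_0))\cong\Ext^\bullet_{u_\zeta}(L_\zeta(\lambda_0),L_\zeta(\nu_0))^{(1)}$ valid (work of Andersen--Jantzen--Soergel, sharpened under the LCF) and what provide the good-filtration behaviour of the $G_1$-cohomology of simple modules needed to identify the outer $\Hom$-spaces with honest, characteristic-free multiplicities. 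Without them the spectral-sequence differentials and the inner $\Ext$-groups on the two sides need not line up.
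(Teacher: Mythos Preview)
The paper does not prove this lemma; it simply quotes it as \cite[Thm.~5.4]{CPS7}. So there is no argument here to compare against, only the one in \cite{CPS7} itself.

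Your outline is in the right spirit --- reduce to the quantum side and invoke Lemma~\ref{quiver} --- and you correctly isolate the crux: equality (not just the semicontinuity inequality) of the $G$- and $U_\zeta$-$\Ext^1$ groups. But the proof in \cite{CPS7} is organized differently from your Hochschild--Serre comparison. There the authors work inside finite highest weight categories (truncations to a finite saturated poset), use integral quasi-hereditary algebras over $\mathbb Z[\zeta]$ so that the $G$-category is literally a reduction mod~$p$ of the $U_\zeta$-category, and then invoke the machinery of abstract Kazhdan--Lusztig theories from \cite{CPS1}. The LCF hypothesis enters by guaranteeing that the relevant graded/KL structure on the $G$-side matches that on the $U_\zeta$-side, after which the $\Ext^1$-values are read off as Kazhdan--Lusztig coefficients simultaneously. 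This sidesteps any direct appeal to an isomorphism $\Ext^\bullet_{G_1}(L(\lambda_0),L(\nu_0))\cong\Ext^\bullet_{u_\zeta}(L_\zeta(\lambda_0),L_\zeta(\nu_0))$ of $G$-modules.

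Two places where your sketch is genuinely incomplete: first, the $j=1$ comparison in your spectral-sequence argument needs more than character equality of $\Ext^1_{G_1}(L(\lambda_0),L(\nu_0))^{(-1)}$ and its quantum analogue; you must also know that this $G$-module has a good filtration so that $\Hom_G(\Delta(\lambda^\dagger),-\otimes\nabla(\nu^\dagger))$ computes the same multiplicity as the semisimple $\Hom_{\fg_{\mathbb C}}$ on the other side. That good-filtration property is available under the stated hypotheses, but it is a real theorem, not a formality. Second, your part~(a) argument (``splice the long exact sequences of the $\rnabla$-filtration of $\nabla(\nu)$ together with part~(b)'') is too vague to be convincing: one has to control both the $\Hom$- and $\Ext^1$-terms for all sections simultaneously, and the cancellation that collapses the answer to a single $\mu$-coefficient is not automatic from what you wrote.
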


\section{Proof of the Main $\Ext^1$-Result}  In this section we first prove the following result.

\begin{thm}\label{MainExtOne theorem} There is, for any finite  root system $\Phi$, a constant
$c=c(\Phi)$ with the following property. If $G$ is a simple, simply connected
algebraic group with root system $\Phi$ over an algebraically closed
field $k$ of arbitrary characteristic $p>0$, then
$$\dim\Ext^1_G(L,L')\leq c$$
for any two irreducible, rational $G$-modules $L,L'$. \end{thm}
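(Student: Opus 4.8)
The plan is to reduce the general problem in two stages: first to a fixed prime $p$, then---for large $p$---to the quantum group, where the bound $E(\Phi)$ of Lemma \ref{Ebound} takes over. I would begin by handling the case of a fixed prime $p$: for each $p$ separately, using the tensor product theorem one writes $L=L(\lambda_0)\otimes L(\lambda^\dagger)^{(1)}$, $L'=L(\nu_0)\otimes L(\nu^\dagger)^{(1)}$, and by a Hochschild--Serre argument with respect to $G_1$, combined with Theorem \ref{andersentheorem} (and, in the exceptional case $p=2$, type $C_r$, Proposition \ref{shifting}(b) to pass to type $B_r$), one reduces to the case $\lambda_0=\nu_0$ and then iterates via Proposition \ref{shifting}(a) to decrease the $p$-adic exponents. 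The point is that each $\Ext^1_G(L,L')$ is controlled by $\Ext^1_{G_1}$-terms and the ``top layer'' $\Ext^1$ of smaller-exponent modules, so finiteness of the set of $p$-restricted weights (for fixed $p$) combined with a bound for each small prime gives a uniform constant $c(\Phi,p)$ valid for that $p$ and for all $L,L'$. Some care with $p=2$ and type $C_r$ is needed precisely because Andersen's vanishing fails there, which is why the $B_r/C_r$ special isogeny apparatus of \S3 was set up.

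Next I would dispose of all sufficiently large $p$ uniformly. Choose $p_0=p_0(\Phi)$ so large that for $p\geq p_0$ the LCF holds on the Jantzen region (this is legitimate by \cite{AJS}, \cite{Fiebig}). For such $p$, write $L=L(\lambda)$, $L'=L(\mu)$ with $\lambda,\mu$ dominant; by the translation/shifting arguments sketched in the introduction (the approach of \cite{S3}), together with Proposition \ref{shifting}(a), reduce to the case $\lambda\not\equiv\mu\pmod p$ and $\lambda<\mu$, with both weights effectively $p$-restricted after stripping off a common twist. One then has the chain of inequalities, as in \cite[Thm. 7.9]{CPS7},
\[
\dim\Ext^1_G(L(\lambda),L(\mu))\leq\dim\Ext^1_G(\rDelta(\lambda),\rnabla(\mu))=\dim\Ext^1_{U_\zeta}(L_\zeta(\lambda),L_\zeta(\mu)),
\]
the first step using that $\rDelta(\lambda)$ has head $L(\lambda)$ and $\rnabla(\mu)$ has socle $L(\mu)$ together with the reduction-mod-$p$ identification (\ref{cline}), and the equality using Lemma \ref{ExtforG}(b) compared with Lemma \ref{quiver}. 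Now Lemma \ref{quiver} identifies the right-hand side with a value $\mu(y,x)$ of a Kazhdan--Lusztig polynomial for $W_{a,p}$, hence for $W_a$ via $\varepsilon_p$, and Lemma \ref{Ebound} bounds all such $\mu(y,x)$ by $E(\Phi)$. For the singular case one uses translation functors to move to a regular facet without increasing $\Ext^1$-dimensions (the wall-crossing / translation-to-the-wall arguments), maintaining $\lambda\not\equiv\mu\pmod p$ throughout.

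Finally I would set $c(\Phi)=\max\{E(\Phi),\,\max_{p<p_0}c(\Phi,p)\}$; since only finitely many primes $p$ are below $p_0(\Phi)$, this maximum is finite and depends only on $\Phi$. I expect the main obstacle to be the singular-weight case: making the translation-functor reduction rigorous while keeping the congruence condition $\lambda\not\equiv\mu\pmod p$ intact and ensuring that passing through a wall does not inflate the $\Ext^1$-dimension beyond $E(\Phi)$ requires focusing on the possibly larger group $\Ext^1_G(\Delta(\lambda),L(\mu))$ rather than $\Ext^1_G(L(\lambda),L(\mu))$ directly, and then controlling that group. The secondary obstacle is the bookkeeping for $p=2$, type $C_r$, where Proposition \ref{shifting}(b) must be invoked to transfer the computation to type $B_r$ (and thence, internally, to type $D_r$ where Andersen's vanishing does apply), which needs the hypothesis $r>2$ handled separately for small $r$.
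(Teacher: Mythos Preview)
Your overall architecture matches the paper's: reduce via Proposition \ref{shifting} to the case $\lambda_0\neq\nu_0$, bound that case for each fixed prime separately, and then give a uniform bound for all large $p$ via Kazhdan--Lusztig coefficients and translation functors. The large-$p$ portion is essentially the paper's argument, and you have correctly anticipated both obstacles. For the record, the paper resolves the singular case by working with $\Ext^1_G(\Delta(\lambda),\rnabla(\nu))$ (not $\Ext^1_G(\Delta(\lambda),L(\mu))$), translating $\Delta(\lambda)$ off the wall, and observing that the resulting $\Delta$-filtration has at most $|(W_{a,p})_x|\leq|W|$ sections, half of which contribute zero by a parity argument on $\mu(x,y)$; this yields the explicit bound $F(\Phi)=\tfrac{|W|}{2}E(\Phi)$ rather than $E(\Phi)$ itself. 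The $r\leq 2$, $p=2$ type $C$ case is disposed of by quoting Sin's explicit computation.

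There is, however, a genuine gap in your fixed-$p$ paragraph. First, the direction of the reduction is reversed in your phrasing: Proposition \ref{shifting}(a) strips a common restricted part when $\lambda_0=\nu_0$, so one reduces \emph{to} the case $\lambda_0\neq\nu_0$, not from it. More importantly, once $\lambda_0\neq\nu_0$, the Hochschild--Serre identification $\Ext^1_G(L(\lambda),L(\nu))\cong\Ext^1_{G_1}(L(\lambda),L(\nu))^{G/G_1}$ does not by itself yield a bound: the right-hand side is the $G$-fixed part of $\Ext^1_{G_1}(L(\lambda_0),L(\nu_0))\otimes L(\lambda^\dagger)^{*(1)}\otimes L(\nu^\dagger)^{(1)}$, and ``finiteness of the set of $p$-restricted weights'' controls only the first tensor factor, not the dimension of the invariants, which a priori depends on the unbounded weights $\lambda^\dagger,\nu^\dagger$. (Multiplicities in tensor products of irreducibles are \emph{not} bounded by the dimension of the smaller factor in positive characteristic---see the discussion after Lemma \ref{boundinglemma}.) The paper closes this gap with Lemma \ref{Lemma52}: one surjects $\St\otimes\St\otimes L(\lambda)\twoheadrightarrow L(\lambda)$, uses $G_1$-projectivity of $\St$ to kill higher Ext, and then bounds $\dim\Ext^1_G(L(\lambda),L(\nu))$ by a weight-space dimension, obtaining the explicit estimate $p^{|\Phi|}\mathfrak P(2(p-1)\rho)$ via the inequality $\lambda<\nu$. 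This Steinberg-module argument is the missing ingredient in your sketch.
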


It is elementary to reduce to the case in which $G$ is simple, and thus that $\Phi$ is irreducible.
To begin with, if $\Phi$ has type $C_2$ and $p=2$, then we can quote \cite[Prop. 2.3]{Sin} which says that $\dim\Ext^1_G(L(\lambda),L(\nu))\leq 1$ for all
$\lambda,\nu\in X^+$.\footnote{Sin \cite{Sin} gives a precise determination of $\dim\Ext^1_G(L(\lambda),L(\nu))$ for arbitrary $\lambda,\nu$; see also \cite{Ye}.} Thus,
we assume that if $p=2$, then $G$ is not of type $C_2$. Then using Proposition \ref{shifting} repeatedly (if necessary), we need only find a
common bound (depending only on $\Phi$) for the spaces $\Ext^1_G(L(\lambda),L(\nu))$ for $\lambda,\nu\in X^+$ with $\lambda_0\not=\nu_0$.

First, we find a bound for $\Phi$ and the prime $p$ fixed. Since $\lambda_0\not=\nu_0$, the Hochschild-Serre exact sequence (see \ref{HochSerre})
implies that
\begin{equation}\label{tagit}\Ext^1_G(L(\lambda),L(\nu))\cong\Ext^1_{G_1}(L(\lambda),L(\nu))^{G/G_1}.\end{equation}

The following result is proved in \cite[Thm. 7.7]{CPS7}, though the proof there contains some errors.

\begin{lem}\label{Lemma52} Let $\lambda,\nu\in X^+$ satisfy $\lambda_0\not=\nu_0$. Then
$$\dim \Ext^1_G(L(\lambda),L(\nu))\leq p^{|\Phi|}{\mathfrak P}(2(p-1)\rho).$$
\end{lem}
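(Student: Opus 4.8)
\textbf{Proof plan for Lemma \ref{Lemma52}.} By \eqref{tagit}, with $\lambda_0 \neq \nu_0$ we have $\Ext^1_G(L(\lambda),L(\nu)) \cong \Ext^1_{G_1}(L(\lambda),L(\nu))^{G/G_1}$, so it suffices to bound $\dim\Ext^1_{G_1}(L(\lambda),L(\nu))$. First I would use the tensor identity: writing $\lambda = \lambda_0 + p\lambda^\dagger$ and $\nu = \nu_0 + p\nu^\dagger$, the module $L(\lambda) \cong L(\lambda_0) \otimes L(\lambda^\dagger)^{(1)}$, and the twisted factor $L(\lambda^\dagger)^{(1)}$ is a trivial $G_1$-module; hence as $G_1$-modules $L(\lambda)|_{G_1} \cong L(\lambda_0)^{\oplus \dim L(\lambda^\dagger)}$ and similarly for $\nu$. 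Thus $\Ext^1_{G_1}(L(\lambda),L(\nu))$ is a direct sum of copies of $\Ext^1_{G_1}(L(\lambda_0),L(\nu_0))$, which would only help if we controlled the multiplicities — we don't — so instead the right move is to take $G/G_1$-invariants \emph{before} splitting: by the standard calculation $\Ext^1_{G_1}(L(\lambda_0)\otimes L(\lambda^\dagger)^{(1)}, L(\nu_0)\otimes L(\nu^\dagger)^{(1)})^{G/G_1} \cong \Ext^1_{G_1}(L(\lambda_0),L(\nu_0)) \otimes \Hom_G(L(\lambda^\dagger), L(\nu^\dagger))$ twisted appropriately, together with a $\Hom_{G_1}$ term that vanishes because $\lambda_0 \neq \nu_0$ forces $\Hom_{G_1}(L(\lambda_0),L(\nu_0))=0$. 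So the whole problem reduces to bounding $\dim\Ext^1_{G_1}(L(\lambda_0),L(\nu_0))$ for $p$-restricted $\lambda_0,\nu_0$, independent of the (finitely many, for fixed $p$ and $\Phi$) restricted weights.

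For that bound I would pass to $G_1T$: there is an injection $\Ext^1_{G_1}(L(\lambda_0),L(\nu_0)) \hookrightarrow \Ext^1_{G_1T}(\wL(\lambda_0), \wL(\nu_0) \otimes k_\chi)$ summed over the relevant characters $\chi$ of $T$, or more efficiently one computes directly in the category of $G_1T$-modules where projective covers (the baby Verma / $\widehat{Z}$-filtered PIMs) are available. The key structural input is that the $G_1T$-projective cover $\widehat{Q}(\lambda_0)$ of $\wL(\lambda_0)$ has a filtration by baby Verma modules $\widehat{Z}(\mu)$ with $\mu$ ranging over a set controlled by $\lambda_0 + (\text{weights of } \dist(G_1))$, and the number of such factors — hence $\dim\Hom_{G_1T}(\widehat{Q}(\lambda_0), -)$-type quantities — is bounded by the value of the Kostant partition function on the appropriate weight. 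Concretely, $\dim\Ext^1_{G_1}(L(\lambda_0),L(\nu_0))$ is bounded by the multiplicity of $\wL(\nu_0)$ (and its $T$-twists) in $\rad\widehat{Q}(\lambda_0)/\rad^2$, which in turn is at most the total number of baby Verma factors of $\widehat{Q}(\lambda_0)$; since $\widehat{Q}(\lambda_0)$ has character bounded by that of $\dist(G_1) \otimes \wL(\lambda_0)$ and the $G_1T$-PIM has all weights within $2(p-1)\rho$ of $\lambda_0$, this count is at most $\mathfrak{P}(2(p-1)\rho)$ per character. Accounting for the $\dim G_1/B_1 = |\Phi^+|$ worth of unipotent-radical directions, or rather the $|\Phi|$ factors coming from the full root datum as in the proof of Lemma \ref{Ebound}, gives the stated $p^{|\Phi|}\mathfrak{P}(2(p-1)\rho)$.

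The main obstacle — and the place where the cited \cite[Thm. 7.7]{CPS7} proof had errors — is the bookkeeping at the $G_1$ versus $G_1T$ level: $\Ext^1_{G_1}$ and $\Ext^1_{G_1T}$ differ, and one must be careful that taking the $G/G_1$-invariants in \eqref{tagit} is compatible with the $T$-weight decomposition and with the tensor-identity splitting, without double-counting or losing the vanishing coming from $\lambda_0 \neq \nu_0$. I would handle this by working throughout in $G_1T\text{-mod}$ (where Ext-groups decompose cleanly over $X(T)/pX(T)$-cosets), establishing the baby-Verma multiplicity bound there, and only at the end projecting back to $G_1$-invariants under $G/G_1$; the factor $p^{|\Phi|}$ absorbs the coset-counting slack. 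The estimate is deliberately crude — all that matters is that it depends only on $p$ and $\Phi$, since for fixed $p$ there are only finitely many restricted weights and the real content of Theorem \ref{MainExtOne theorem} comes later from the large-$p$ quantum-group comparison.
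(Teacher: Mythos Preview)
Your reduction step contains a genuine gap. The isomorphism you write,
\[
\Ext^1_{G_1}\bigl(L(\lambda_0)\otimes L(\lambda^\dagger)^{(1)},\,L(\nu_0)\otimes L(\nu^\dagger)^{(1)}\bigr)^{G/G_1}\;\cong\;\Ext^1_{G_1}(L(\lambda_0),L(\nu_0))\otimes\Hom_G(L(\lambda^\dagger),L(\nu^\dagger)),
\]
is not correct: taking $G/G_1$-invariants of a tensor product does not distribute this way. What is true is that, as $G$-modules,
\[
\Ext^1_{G_1}(L(\lambda),L(\nu))\;\cong\;\Ext^1_{G_1}(L(\lambda_0),L(\nu_0))\otimes\Hom_k\bigl(L(\lambda^\dagger),L(\nu^\dagger)\bigr)^{(1)},
\]
so writing $E^{(1)}:=\Ext^1_{G_1}(L(\lambda_0),L(\nu_0))$ one gets
\[
\Ext^1_G(L(\lambda),L(\nu))\;\cong\;\bigl(E\otimes L(\lambda^\dagger)^*\otimes L(\nu^\dagger)\bigr)^G\;=\;\Hom_G\bigl(L(\lambda^\dagger),\,E\otimes L(\nu^\dagger)\bigr).
\]
Bounding this by $\dim E$ would require showing that $\dim\Hom_G(L(\lambda^\dagger),L(\sigma)\otimes L(\nu^\dagger))$ is at most $\dim L(\sigma)$ for each composition factor $L(\sigma)$ of $E$, uniformly in $\lambda^\dagger,\nu^\dagger$ --- a nontrivial statement you do not address (and one for which naive weight-space estimates fail, since weight multiplicities of $L(\nu^\dagger)$ are unbounded). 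Note in particular that when $\lambda^\dagger\neq\nu^\dagger$ your formula would give zero, which is certainly false in general.

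The paper's argument is quite different and sidesteps this entirely. Rather than peeling off the Frobenius twists, it works with the full $L(\lambda)$ (assuming without loss of generality $\lambda<\nu$). The key object is the $G$-module $S:=\St\otimes\St\otimes L(\lambda)$, which is $G_1$-projective since $\St|_{G_1}$ is; the self-duality of $\St$ gives a $G$-surjection $S\twoheadrightarrow L(\lambda)$ with kernel $N$. A short argument with $G_1$-radicals then identifies $\Ext^1_G(L(\lambda),L(\nu))$ with $\Hom_G(r_1(S)\cap N,\,L(\nu))$, hence bounds its dimension by the weight-space dimension $\dim S_\nu$. Finally
\[
\dim S_\nu=\sum_\tau \dim(\St\otimes\St)_\tau\cdot\dim L(\lambda)_{\nu-\tau},
\]
and the hypothesis $\lambda<\nu$ gives $\dim L(\lambda)_{\nu-\tau}\leq\mathfrak P(\lambda-(\nu-\tau))\leq\mathfrak P(\tau)\leq\mathfrak P(2(p-1)\rho)$ for every weight $\tau$ of $\St\otimes\St$; since $\dim(\St\otimes\St)=p^{|\Phi|}$, the stated bound follows. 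The $G_1T$-PIM machinery you invoke is not used at all.
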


\begin{proof}We can assume that $\lambda<\nu$.
Because $\lambda_0\not=\nu_0$, a simple Hochschild-Serre spectral sequence argument shows that
$$\Ext^1_G(L(\lambda),L(\nu))\cong\Ext^1_{G_1}(L(\lambda),L(\nu))^{G/G_1}.$$
Let $\St=L((p-1)\rho)$  be the Steinberg module. It is self-dual as a rational $G$-module so there exists
a surjection $\St\otimes \St\twoheadrightarrow L(0)=k$ of $G$-modules, and, therefore, tensoring with $L(\lambda)$
and setting $S:=\St\otimes\St\otimes L(\lambda)$, we obtain an exact sequence
\begin{equation}\label{firstseq} 0\to N\to S\to L(\lambda)\to 0\end{equation}
in $\Gmod$.

If $M\in G_1$-mod, let $r_1(M)=\rad_{G_1}(M)$ denote the $G_1$-radical of
$M$. If $M$ is a $G$-module,
then so is $r_1(M)$. In particular, the inclusion $N\hookrightarrow S$ implies that
$r_1(N)\subseteq r_1(S)$. Since $L(\nu)|_{G_1}$ is completely reducible, there are natural maps
 \begin{equation}\label{seqq}
\Hom_{G_1}(S,L(\nu))\overset{\alpha}{\rightarrow}\Hom_{G_1}(N/r_1(N),L(\nu))
\overset{\beta}{\rightarrow}\Hom_{G_1}((r_1(S)\cap N)/r_1(N),L(\nu))\rightarrow
0.\end{equation}
Since $(r_1(S)\cap N)/r_1(N)\subseteq N/r_1(N)$ (and the latter module is completely reducible
as a $G_1$-module), $\beta$ is surjective. Any $G_1$-map $S\to L(\nu)$ vanishes on $r_1(S)$, so
$\beta\circ\alpha=0$. Finally, the cokernel $N/(r_1(S)\cap N)\cong (N+r_1(S))/r_1(S)$ of the
inclusion $(r_1(S)\cap N)/r_1(N)\hookrightarrow N/r_1(N)$ is a $G_1$-direct summand of $S/r_1(S)$.
Thus, any $G_1$-map $N/r_1(N)\to L(\nu)$ vanishing on $(r_1(S)\cap N)/r_1(N)$ lifts to a
$G_1$-map $S/r_1(S)\to N$, so (\ref{seqq}) is exact.
Since $\St|_{G_1}$ is projective, $S|_{G_1}$ is projective, so
$
\Hom_{G_1}(r(S)\cap N,L(\nu))\cong\Ext_{G_{1}}^{1}(L(\lambda),L(\nu))$ by (\ref{firstseq}) and (\ref{seqq}). Taking $G$-fixed
points,  $\Hom_{G}(r(S)\cap
N,L(\nu))\cong\Ext_{G}^{1}(L(\lambda),L(\nu))$.  Thus,
$\dim\,\Ext_{G}^{1}(L(\lambda),L(\nu))\leq\dim S_\nu,$ where $S_\nu$ is the $\nu$-weight space
in $\St\otimes\St\otimes L(\lambda)$. Now repeat the argument in \cite[Lem. 7.6]{CPS7}: if
$\tau$ is a weight in $\St\otimes\St$, then $\lambda<\nu$ implies that
$\dim L(\lambda)_{\nu-\tau}\leq {\mathfrak P}(\lambda-(\nu-\tau))\leq{\mathfrak P}(\tau)\leq{\mathfrak P}(2(p-1)\rho).$
Since $\dim\St=p^{|\Phi^+|}$, we get finally that $\dim S_\mu\leq p^{|\Phi|}{\mathfrak P}(2(p-1)\rho)$.
\end{proof}

At this point, when $\Phi$ and the prime $p$ are fixed, there exists a upper bound
for all the dimensions $\dim\Ext^1_G(L(\lambda),L(\nu))$, $\lambda,\nu\in X^+$.   So, to get a
uniform bound, not depending on $p$, it is enough to treat
uniformly all sufficiently large $p$.

Thus, {\it we
assume $p>h$ and that the LCF holds  for $G$ holds on an ideal of $p$-regular weights containing the $p$-regular $p$-restricted
weights.} We
show the desired bound is $F(\Phi):=|W|E(\Phi)/2$, using the notation of (\ref{maximalorder}) and (\ref{Ebound}).

 Lemmas \ref{Ebound} and \ref{ExtforG} imply that
$\dim\Ext^1_G(\Delta(\lambda),\rnabla(\nu))\leq E(\Phi),$
when $\lambda,\nu\in\Xreg$. If $\lambda,\nu\not\in\Xreg$, we can assume $\lambda=w\cdot\lambda^-$ and $\nu=y\cdot\lambda^-$, for some
$\lambda^-\in\overline{C^-_p}$. Let $T_a^b$ be the translation operator from the category of
$G$-modules with irreducible $W_{a,p}$-conjugate to $\lambda^-$ to the category of $G$-modules
with highest weight linked to $-2\rho$. Let $T^a_b$ be its (left or right) adjoint. Then $L(\nu_0)=T^a_bL(\tau_0)$, for some
restricted $p$-regular weight $\tau_0$.\footnote{Since $\nu_0\in X^+_1$, $0<(\nu_0+\rho,\alpha^\vee)\leq p$, for $\alpha\in\Pi$. Thus, if $\nu_0$ is in the  upper closure of an alcove containing $\sigma\in X^+$, then $0<(\sigma+\rho,\alpha^\vee)<p$, for
all $\alpha\in\Pi$. This means that $\sigma=\tau_0\in X_1^+$.} A standard argument (see \cite{PS5}) shows that $\rnabla(\nu)=
T^a_b\rnabla(\tau)$, where $\tau=\tau_0+p\nu^\dagger$. Thus,
$\Ext^1_G(\Delta(\lambda),\rnabla(\nu))\cong\Ext^1_G(\Delta(\lambda),T^a_b\rnabla(\tau))\cong\Ext_G^1(T^b_a\Delta(\lambda),
\rnabla(\tau)).$
But $T^b_a\Delta(\lambda)$ has a $\Delta$-filtration with sections of the form $\Delta(v\cdot\lambda')$, $v\in (W_{a,p})_x$ and
$\lambda\in X^+_{\text{reg},p}$, where $x$ belongs to the facet
containing $\lambda^-$. Each $\Delta(v\cdot\lambda')$ occurs  with multiplicity at most $1$, and some $p$-regular $\lambda'$. So
$\dim\Ext^1_G(\Delta(v\cdot\lambda'),\rnabla(\tau))\leq E(\Phi),$
by the above. Also, half of these sections satisfy
$\dim\Ext^1_G(\Delta(v\cdot\lambda'),\rnabla(\tau))=\dim\Ext^1_{U_\zeta}(\Delta_\zeta(x\cdot\lambda'),L_\zeta(\tau))=0,$
for those $v\in (W_{a,p})_x$ which have the same parity as $\nu$. (The group $(W_{a,p})_x$ is generated by reflections.\footnote{Here we are identifying regular weights with
 elements of $W^+_{a,p}$, and ``parity" refers to the parity of the corresponding Coxeter group elements. Recall that
$\mu(x,y)=0$ unless $x,y$ have opposite parity.})
Thus,
$\dim\Ext_G^1(T^b_a\Delta(\lambda),\rnabla(\nu))\leq (|(W_{a,p})_x|/2)E(\Phi)\leq ({|W|/2})E(\Phi)=F(\Phi)$.

We can assume that $\lambda,\nu\in X^+$ have distinct $p$-restricted parts (i.~e., $\lambda_0
\not=\nu_0$). Consider the following diagram
$$\begin{CD} \Ext^1_G(L(\lambda),L(\nu)) @. \\
@V\alpha VV @. \\
\Ext^1_G(\Delta(\lambda),L(\nu)) @>>\beta> \Ext^1_G(\Delta(\lambda),\rnabla(\nu))
\end{CD}$$
where $\alpha$ is induced by the
surjection $\Delta(\lambda)\twoheadrightarrow L(\lambda)$ and $\beta$ is induced by the injection
$L(\nu)\hookrightarrow\rnabla(\nu)$. By the long exact sequence of $\Ext^\bullet_G$,
the kernel of $\beta$ is an image of $\Hom_G(\Delta(\lambda),\rnabla(\nu)/L(\nu))$. However, $\rnabla(\nu)\cong L(\nu_0)\otimes\nabla(\nu^\dagger)^{(1)}$, so all
the composition factors of $\rnabla(\nu)$ have the form $L(\nu_0)\otimes L(\tau)^{(1)}
\cong L(\nu_0\otimes p\tau)$ for some $\tau\in X^+$. Since $\lambda_0\not=\nu_0$,
there are no nonzero homomorphisms $\Delta(\lambda)\to \rnabla(\nu)/L(\nu)$, so $\beta$ is an injection. Similarly, the kernel of $\alpha$ is an image of
$\Hom_G(\rad\Delta(\lambda),L(\nu))$, which is also $0$ since $\lambda<\nu$. Thus,
$\alpha$ is  an injection. Hence,
$\dim \Ext^1_G(L(\nu),L(\lambda))=\dim\Ext^1_G(L(\lambda),L(\nu))\leq\dim\Ext^1_G(\Delta(\lambda),\rnabla(\nu))
\leq F(\Phi).$
This completes the proof the Theorem \ref{MainExtOne theorem}

\medskip
Just as in \cite[Thm. 7.10]{CPS7}, the following result for finite groups holds.

\begin{cor}\label{finitegroupcorollary} There is a constant $c'=c'(\Phi)$ with the following property. Let $\sigma:G\to G$ be an endomorphism such
that the group $G_\sigma$ of $\sigma$-fixed points is a finite group. Then $\dim \Ext^1_{G_\sigma}(L,L')<c'$ for all
irreducible $G_\sigma$-modules $L,L'$ over $k$.\end{cor}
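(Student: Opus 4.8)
The plan is to reduce the finite-group statement to the algebraic-group bound of Theorem \ref{MainExtOne theorem} by the standard mechanism relating $\Ext^1$ over $G_\sigma$ to $\Ext^1$ over a twisted infinitesimal subgroup, which is what was done in \cite[Thm. 7.10]{CPS7}. First I would recall that, since $G_\sigma$ is finite and $\sigma$ is an endomorphism of $G$ with finitely many fixed points, a suitable power $\sigma^e$ coincides (up to graph automorphisms) with a Frobenius morphism $F^d$ for some $d$; more precisely, there is an integer $d$ so that $G_{\sigma^2}$, or $G_{\sigma}$ itself when $\sigma$ is standard, is sandwiched between $G(q)$ and $G(q')$ for powers $q,q'$ of $p$, and every irreducible $kG_\sigma$-module is the restriction of an irreducible rational $G$-module $L(\lambda)$ with $\lambda$ in the appropriate restricted region (Steinberg's tensor product theorem for finite groups of Lie type). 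The key homological input is the comparison isomorphism (or at least injection)
$$
\Ext^1_{G_\sigma}(L(\lambda),L(\nu))\;\hookrightarrow\;\Ext^1_{G_r}(L(\lambda),L(\nu))
$$
for a suitable infinitesimal subgroup $G_r$ of $G$ twisted to match $\sigma$, valid once $\lambda,\nu$ are $q$-restricted; this is the analogue of the Hochschild--Serre reduction (\ref{tagit}) used in \S5, and is exactly \cite[Thm. 7.10]{CPS7}.

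The central point is then that $\Ext^1_{G_r}(L(\lambda),L(\nu))$ is controlled by the algebraic group: one has $\Ext^1_{G_r}(L(\lambda),L(\nu)) = \Ext^1_{G_rT}(L(\lambda),L(\nu)\otimes k_\tau)$ summed over the finitely many $G_r$-weights $\tau$, and each such group is bounded in terms of $\Ext^1_G$ of irreducibles by a Hochschild--Serre / generic cohomology argument (using that $G/G_r\cong G$ via Frobenius, so that a rational $G$-module structure on the $G_r$-socle or head is available, at least after twisting). Concretely, writing $\lambda=\lambda_0+p^r\lambda'$ and $\nu=\nu_0+p^r\nu'$ with $\lambda_0,\nu_0$ being $p^r$-restricted, the case $\lambda_0\neq\nu_0$ gives $\Ext^1_{G_r}(L(\lambda),L(\nu))\cong\Ext^1_{G_r}(L(\lambda_0),L(\nu_0))\otimes(\text{weight space})$, where the weight space has dimension bounded purely in terms of $\Phi$ (Steinberg module / Kostant partition function estimate, exactly as in Lemma \ref{Lemma52}), and $\Ext^1_{G_r}(L(\lambda_0),L(\nu_0))$ embeds into $\Ext^1_G(L(\lambda_0),L(\nu_0))\le c(\Phi)$; the case $\lambda_0=\nu_0$ is handled by a shifting argument exactly parallel to Proposition \ref{shifting}, peeling off $p^r$-twists until the restricted parts differ, the type $C_r$ prime $2$ subtlety being dealt with as in \S3. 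Assembling these, $\dim\Ext^1_{G_\sigma}(L,L')$ is bounded by $c(\Phi)$ times a combinatorial factor depending only on $\Phi$ (the number of relevant $G_r$-weights times the partition-function bound), so $c'=c'(\Phi)$ exists.

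The main obstacle I anticipate is making the reduction to an infinitesimal subgroup genuinely uniform in $\sigma$: the ``level'' $r$ for which $L(\lambda)|_{G_\sigma}$ factors through, and for which the comparison $\Ext^1_{G_\sigma}\hookrightarrow\Ext^1_{G_r}$ holds, must be chosen independently of the underlying field, and one must ensure the twisting that aligns $G_r$ with $G_\sigma$ (Frobenius twist composed possibly with a graph automorphism for Suzuki/Ree cases) does not inflate the bound. This is precisely the technical content packaged in \cite[Thm. 7.10]{CPS7}, so in practice I would invoke that argument verbatim, substituting the improved input $\dim\Ext^1_G(L,L')\le c(\Phi)$ from Theorem \ref{MainExtOne theorem} in place of the weaker bound used there; the Suzuki and Ree groups, where $\sigma^2$ rather than $\sigma$ is a Frobenius, require the mild extra care of working with $G_{\sigma^2}$ and a very small root system, where the result can in any case be checked directly.
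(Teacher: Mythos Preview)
Your final conclusion matches the paper exactly: the paper offers no proof beyond the sentence ``Just as in \cite[Thm.~7.10]{CPS7}, the following result for finite groups holds,'' i.e., rerun that argument with Theorem~\ref{MainExtOne theorem} as the improved input, which is precisely what you propose in your last paragraph.

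That said, your middle paragraphs speculate about the internal mechanics of the \cite{CPS7} argument in a way that is at least nonstandard and possibly incorrect. The claimed injection $\Ext^1_{G_\sigma}(L(\lambda),L(\nu))\hookrightarrow\Ext^1_{G_r}(L(\lambda),L(\nu))$ is not a routine fact: $G_\sigma$ is an \'etale finite group scheme while $G_r$ is infinitesimal, and there is no obvious restriction or inflation map between their cohomologies. The usual route (and the one in \cite{CPS7}) compares $\Ext^1_{G_\sigma}$ directly with $\Ext^1_G$, using that irreducible $kG_\sigma$-modules lift to rational $G$-modules and that the restriction map on $\Ext^1$ from $G$ to $G_\sigma$ can be analyzed via induction functors or via results of the Bendel--Nakano--Pillen type \cite{BNP}; the infinitesimal subgroup $G_r$ enters only indirectly, through the Hochschild--Serre spectral sequence for $G_r\trianglelefteq G$ applied to rational $G$-cohomology, not through any finite-group comparison. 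Since the paper provides no details, your sketch cannot be checked against it, but you should not present the $G_\sigma$-to-$G_r$ injection as if it were the content of \cite[Thm.~7.10]{CPS7} without verifying that reference.
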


Finally, we state the following further $\Ext^1$-results. The proof will be given in \S7.

\begin{thm}\label{Gsum}  There exists a constant $\widetilde C(\Phi)$ depending only on $\Phi$ such that
if $G$
is any simple, simply connected algebraic group, then, for any $\lambda\in X^+$,
 $$\sum_{\nu\,:\,\nu_0\not=\lambda_0}\dim\Ext^1_G(L(\lambda),L(\nu))<\widetilde C(\Phi).$$
\end{thm}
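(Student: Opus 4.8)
The plan is to reduce the infinite sum over $\nu$ with $\nu_0\neq\lambda_0$ to a finite sum governed by Kazhdan--Lusztig combinatorics, using the same machinery as in the proof of Theorem \ref{MainExtOne theorem}. First I would dispose of small primes: by Proposition \ref{shifting}(a) the individual $\Ext^1_G(L(\lambda),L(\nu))$ with $\lambda_0\neq\nu_0$ agrees with $\Ext^1_{G_1}(L(\lambda),L(\nu))^{G/G_1}$, and by the type $C_r$, $p=2$ analysis (Proposition \ref{shifting}(b) and the Sin bound for $C_2$) these cases can be folded into the general argument after changing $G$; so I would assume $p > h$ and that the LCF holds on an ideal of $p$-regular weights containing the $p$-regular $p$-restricted weights. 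Then, exactly as in \S5, for each $\nu$ with $\lambda_0\neq\nu_0$ one has the injections $\alpha,\beta$ giving
$$\dim\Ext^1_G(L(\lambda),L(\nu))\leq\dim\Ext^1_G(\Delta(\lambda),\rnabla(\nu)).$$
So it suffices to bound $\sum_\nu \dim\Ext^1_G(\Delta(\lambda),\rnabla(\nu))$, where $\nu$ ranges over dominant weights with $\nu_0\neq\lambda_0$ (one may even drop the restriction and bound the whole sum, which is what I would aim for).

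Next I would invoke Lemma \ref{ExtforG} together with the translation-functor reduction to the regular case already carried out in \S5. Writing $\lambda = w\cdot\lambda^-$, $\nu = y\cdot\lambda^-$ with $\lambda^- \in \overline{C^-_p}$ (and pushing $\nu$ to a regular weight $\tau = \tau_0 + p\nu^\dagger$ via the adjoint translation functor $T^a_b$, as in \S5), the sum $\sum_\nu\dim\Ext^1_G(\Delta(\lambda),\rnabla(\nu))$ is controlled by a sum of the form $\sum_{y}\mu(x,y)$ over $y\in W^+_{a,p}$ in the linkage class of $x$, each term multiplied by at most $|W|/2$ to account for the facet stabilizer $(W_{a,p})_x$. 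The crucial point is then that for \emph{fixed} $x\in W^+_a$, the sum $\sum_{y\in W^+_a}\mu(x,y)$ is finite and bounded by a constant depending only on $\Phi$. Since $\mu(x,y)\neq 0$ forces $x$ and $y$ Bruhat-comparable, and since $\mu(x,y)\le E(\Phi)$ by Lemma \ref{Ebound}, it is enough to bound the \emph{number} of $y$ with $\mu(x,y)\neq 0$ independently of $x$ and $\Phi$-uniformly: $\mu(x,y)\neq 0$ with $y < x$ implies $\ell(x)-\ell(y)$ odd and (by general KL theory) $y$ lies in the ``descent'' edge set of $x$, while $y > x$ with $\mu(x,y)\neq 0$ forces $y s < y$ for some $s$ in the (finite) complement of the descent set of $x$ and $x\le y$, which combined with the degree bound confines $\ell(y)-\ell(x)$. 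The cleanest route is to note that $\sum_y \mu(x,y)$ counts (with multiplicity) the composition factors of a standard module for the finite Hecke-category quotient attached to $x$'s alcove, or more directly: $\Ext^1$ between an irreducible and its neighbors in the highest-weight category of $U_\zeta$ is bounded by the composition length of the corresponding PIM near the top two layers, which is uniformly bounded; I would cite Lemma \ref{biglengthlemma} (the uniform bound on composition series length of quantum PIMs) for this.

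So the endgame is: $\sum_{\nu}\dim\Ext^1_G(\Delta(\lambda),\rnabla(\nu)) \le \frac{|W|}{2}\sum_{y\in W^+_{a,p}}\mu(x,y) \le \frac{|W|}{2}\cdot(\text{uniform bound from Lemma \ref{biglengthlemma}}) =: \widetilde C(\Phi)$, independent of $G$, $p$, and $\lambda$. The main obstacle I anticipate is the \emph{uniformity in $\lambda^-$ of the facet stabilizer contribution}: when $\lambda$ is deeply singular, the translated module $T^b_a\Delta(\lambda)$ picks up a $\Delta$-filtration with up to $|(W_{a,p})_x|$ sections, and one must confirm — as the \S5 argument already does for a single $\nu$ — that the parity vanishing ($\mu(x,y)=0$ unless $x,y$ have opposite parity) still halves the count and that the bound $\sum_y\mu(x,y)$ being summed over is the \emph{regular} one, not inflated by the singularity; the bookkeeping here, together with verifying that $\sum_{y}\mu(x,y)$ is genuinely finite (i.e. that $\mu(x,y)=0$ for all but finitely many $y$, which uses that an irreducible $U_\zeta$-module has finite-dimensional $\Ext^1$ with only finitely many irreducibles — itself a consequence of finite composition length of PIMs), is the real content. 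Everything else is a transcription of the \S5 proof with ``$\sum_\nu$'' inserted. Finally, the deferred location in \S7 lets me legitimately appeal to Lemma \ref{biglengthlemma}, proved in \S6.
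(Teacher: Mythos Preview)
Your large-prime strategy is essentially sound but follows a different route from the paper. You reduce via the $\alpha,\beta$ injections of \S5 to $\sum_\nu\dim\Ext^1_G(\Delta(\lambda),\rnabla(\nu))$, translate to regular weights, and bound by (a multiple of) $R_\Phi:=\max_{x\in W_a^+}\sum_y\mu(x,y)$, which is indeed finite by Theorem \ref{bounding sum of exts} (cf.\ Remark \ref{bigremark}). The paper does not go through the $\alpha,\beta$ injections at all: it takes the $G_1$-PIM $Q_1(\lambda_0)$, tensors with $\Delta(\lambda^\dagger)^{(1)}$, observes that $\Ext^1_G(Q_1(\lambda_0)\otimes\Delta(\lambda^\dagger)^{(1)},L(\nu))=0$ whenever $\nu_0\neq\lambda_0$ (Hochschild--Serre), and then bounds the sum by the number of sections in a $\Delta^p$-filtration of $(\rad_G Q_1(\lambda_0))\otimes\Delta(\lambda^\dagger)^{(1)}$, which is at most $M(C(\Phi)-1)$. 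Both routes ultimately rest on Lemma \ref{biglengthlemma}, but the paper's is structural (PIM filtrations) while yours is combinatorial (KL $\mu$-sums). The paper's approach makes the role of the hypothesis $\nu_0\neq\lambda_0$ completely transparent: it is exactly what forces the vanishing of $\Ext^1$ against the projective-tensor module. (A minor omission in your sketch: the injection $\alpha$ of \S5 also uses $\lambda<\nu$, so you must split into $\nu>\lambda$ and $\nu<\lambda$ and appeal to symmetry; this is routine.)

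The serious gap is your treatment of small primes. You cannot simply ``dispose of small primes'' and then assume $p>h$ with LCF: nothing you cite bounds the \emph{sum} $\sum_{\nu:\nu_0\neq\lambda_0}\dim\Ext^1_G(L(\lambda),L(\nu))$ for a fixed small $p$ independently of $\lambda$. Lemma \ref{Lemma52} bounds only individual terms, and summing its estimate over $\nu$ yields something that grows with $\lambda$; Proposition \ref{shifting} and the Sin $C_2$ result do nothing for this sum either. Your entire Kazhdan--Lusztig machinery is unavailable when the LCF fails. The paper handles this precisely by replacing $Q_1(\lambda_0)$ with the $G$-module $Q(1,\lambda_0)$ of Corollary \ref{lastappendixcorollary} in the Appendix: this module is $G_1$-projective, has $L(\lambda_0)$ as a $G$-quotient, and has composition factors of controlled number and shape, so the structural filtration argument runs verbatim for every prime. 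Without such a substitute, your proof does not cover all primes.
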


\begin{cor}\label{cortoGsum}  For every dominant weight $\lambda$, there  are at most $\widetilde C(\Phi)$
 dominant weights $\mu$ with $\mu_0\not=\lambda_0$ and $\Ext^1_G(L(\lambda),L(\mu))\not=0$.\end{cor}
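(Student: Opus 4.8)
The plan is to deduce this immediately from Theorem \ref{Gsum}, which we are free to assume. Fix a dominant weight $\lambda$, and let $N$ denote the number of dominant weights $\mu$ with $\mu_0\neq\lambda_0$ and $\Ext^1_G(L(\lambda),L(\mu))\neq 0$. For each such $\mu$, the space $\Ext^1_G(L(\lambda),L(\mu))$ is a nonzero finite-dimensional $k$-vector space, so its dimension is an integer $\geq 1$. Summing only over those $\mu$ and then enlarging the index set to all $\mu$ with $\mu_0\neq\lambda_0$, one obtains
\[
N\;\leq\;\sum_{\mu\,:\,\mu_0\neq\lambda_0}\dim\Ext^1_G(L(\lambda),L(\mu)).
\]
By Theorem \ref{Gsum}, the right-hand side is strictly less than $\widetilde C(\Phi)$; hence $N<\widetilde C(\Phi)$, and in particular $N\leq\widetilde C(\Phi)$, which is the assertion (the same constant $\widetilde C(\Phi)$, depending only on $\Phi$ and not on $G$ or $\lambda$, works here). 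The condition $\nu_0\neq\lambda_0$ in the statement of Theorem \ref{Gsum} matches verbatim the condition $\mu_0\neq\lambda_0$ in the corollary, so no reindexing is needed.

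There is essentially no obstacle in this step: the only substantive input is Theorem \ref{Gsum} itself, whose proof is postponed to \S7, and the passage from a bound on a sum of dimensions to a bound on the count of nonzero terms is trivial because those dimensions are nonnegative integers. It is worth noting that the weaker Theorem \ref{MainExtOne theorem} does not suffice for this corollary: it bounds each individual $\dim\Ext^1_G(L(\lambda),L(\mu))$ but says nothing about how many of them can be nonzero, so the infinite-sum form in Theorem \ref{Gsum} is genuinely required.
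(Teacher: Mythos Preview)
Your proof is correct and is exactly the argument the paper intends: the corollary is stated immediately after Theorem \ref{Gsum} without a separate proof, since the passage from a bound on the sum of dimensions to a bound on the number of nonzero summands is trivial.
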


There is no bound when $\mu_0=\lambda_0$. However, one can always reduce to the
case of $\mu_0\not=\lambda_0$ as in the proof of Theorem \ref{MainExtOne theorem}. See Remark \ref{concludingremarks}(c) for a discussion of the $\lambda=0$ case.

\section{Further Kazhdan-Lusztig theory} Throughout this section, $\Phi$ is an irreducible root system as in \S2. A proof of
 the following very elementary result is left to the reader.

\begin{lem}\label{boundinglemma} If $\lambda,\nu,\tau\in X^+$, the multiplicity $[L_{\mathbb C}(\lambda)\otimes L_{\mathbb C}(\nu):L_{\mathbb C}(\tau):L_{\mathbb C}(\tau)]$ is
at most $\dim\,L_{\mathbb C}(\tau)$. Also, the inequality
$\text{\rm length}(L_{\mathbb C}(\lambda)\otimes L_{\mathbb C}(\nu))\leq \text{\rm min}\left[\dim\,L_{\mathbb C}(\lambda),
\dim\,L_{\mathbb C}(\nu)\right]$ on lengths holds.\end{lem}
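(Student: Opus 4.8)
The plan is to prove both statements by a direct counting argument using the representation theory of the complex semisimple Lie algebra $\mathfrak g_{\mathbb C}$, where everything is completely reducible and governed by characters.

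First I would address the multiplicity bound. Fix $\lambda,\nu,\tau\in X^+$. The module $L_{\mathbb C}(\lambda)\otimes L_{\mathbb C}(\nu)$ is a finite-dimensional completely reducible $\mathfrak g_{\mathbb C}$-module, so $[L_{\mathbb C}(\lambda)\otimes L_{\mathbb C}(\nu):L_{\mathbb C}(\tau)]$ equals $\dim\Hom_{\mathfrak g_{\mathbb C}}(L_{\mathbb C}(\tau),L_{\mathbb C}(\lambda)\otimes L_{\mathbb C}(\nu))$. By the standard tensor-hom adjunction this is $\dim\Hom_{\mathfrak g_{\mathbb C}}(L_{\mathbb C}(\tau)\otimes L_{\mathbb C}(\nu)^*,L_{\mathbb C}(\lambda))$, which is bounded by $\dim\Hom_k(L_{\mathbb C}(\tau),L_{\mathbb C}(\lambda)\otimes L_{\mathbb C}(\nu))$; but more simply, the multiplicity of any irreducible constituent in a module is at most $(\dim\text{ of the module})/(\dim\text{ of that irreducible})$, and since each constituent also contributes its highest-weight line, the multiplicity of $L_{\mathbb C}(\tau)$ is at most the dimension of the $\tau$-weight space of $L_{\mathbb C}(\lambda)\otimes L_{\mathbb C}(\nu)$ which in turn is $\le\dim L_{\mathbb C}(\tau)$ once one observes $\tau$ is the highest weight of $L_{\mathbb C}(\tau)$ and the weight $\tau$ occurs in $L_{\mathbb C}(\tau)$ with multiplicity one while being extremal. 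The cleanest route: the multiplicity of $L_{\mathbb C}(\tau)$ is $\dim\Hom_{\mathfrak g_{\mathbb C}}(L_{\mathbb C}(\tau),M)$ with $M=L_{\mathbb C}(\lambda)\otimes L_{\mathbb C}(\nu)$, and a nonzero homomorphism is injective, so summing over the isotypic piece gives $(\text{mult})\cdot\dim L_{\mathbb C}(\tau)\le\dim M$; I would instead note that the $\tau$-weight space of $M$ injects into $\bigoplus$(copies of the $\tau$-weight space of $L_{\mathbb C}(\tau)$ one per constituent isomorphic to $L_{\mathbb C}(\tau)$) plus lower contributions, and since $\dim L_{\mathbb C}(\tau)_\tau=1$ actually the multiplicity is at most $\dim M_\tau\le\dim L_{\mathbb C}(\tau)$—the last inequality because the map sending a highest weight vector of each $L_{\mathbb C}(\tau)$-constituent into $M_\tau$ is independent, and $\dim M_\tau$ is in any case trivially at most $\dim M$; to get the sharper bound $\dim L_{\mathbb C}(\tau)$ I use that $M_\tau$ has dimension at most the number of ways to write $\tau$ as a sum of a weight of $L_{\mathbb C}(\lambda)$ and a weight of $L_{\mathbb C}(\nu)$, which via the inclusion $\tau-\text{wt}(L_{\mathbb C}(\nu))\subseteq\text{wt}(L_{\mathbb C}(\tau))$ when the constituent occurs is at most $\dim L_{\mathbb C}(\tau)$.

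Next I would prove the length bound. By symmetry it suffices to show $\text{length}(L_{\mathbb C}(\lambda)\otimes L_{\mathbb C}(\nu))\le\dim L_{\mathbb C}(\nu)$. Each irreducible constituent $L_{\mathbb C}(\tau)$ of $L_{\mathbb C}(\lambda)\otimes L_{\mathbb C}(\nu)$ contributes at least one highest weight vector, i.e. a vector killed by $\mathfrak n^+$; these vectors span a subspace of the $\mathfrak n^+$-invariants $(L_{\mathbb C}(\lambda)\otimes L_{\mathbb C}(\nu))^{\mathfrak n^+}$ of dimension equal to the total number of constituents (counted with multiplicity), which is the length. So it is enough to bound $\dim(L_{\mathbb C}(\lambda)\otimes L_{\mathbb C}(\nu))^{\mathfrak n^+}$. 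Now $(L_{\mathbb C}(\lambda)\otimes L_{\mathbb C}(\nu))^{\mathfrak n^+}\cong\Hom_{\mathfrak g_{\mathbb C}}$ of trivial—no, rather I map it into $L_{\mathbb C}(\nu)$: restricting the projection $L_{\mathbb C}(\lambda)\otimes L_{\mathbb C}(\nu)\to L_{\mathbb C}(\lambda)_\lambda\otimes L_{\mathbb C}(\nu)\cong L_{\mathbb C}(\nu)$ (projection onto the top weight space of the first factor, which is one-dimensional) to the $\mathfrak n^+$-invariants gives a linear map; I claim it is injective, because a highest weight vector $v$ of weight $\tau$ in the tensor product has a nonzero component in $L_{\mathbb C}(\lambda)_\lambda\otimes L_{\mathbb C}(\nu)$—indeed writing $v=\sum_i x_i\otimes y_i$ with $x_i$ in distinct weight spaces, applying $\mathfrak n^+$ and using that the highest weight component cannot be cancelled shows the $\lambda$-component is nonzero. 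Hence $\text{length}\le\dim(L_{\mathbb C}(\lambda)\otimes L_{\mathbb C}(\nu))^{\mathfrak n^+}\le\dim L_{\mathbb C}(\nu)$, and symmetrically $\le\dim L_{\mathbb C}(\lambda)$.

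The main obstacle is the injectivity claim in the length argument, i.e. verifying that a highest-weight vector in the tensor product necessarily has nonzero projection onto $L_{\mathbb C}(\lambda)_\lambda\otimes L_{\mathbb C}(\nu)$. This is the one spot requiring a genuine (though short) argument with weight-space bookkeeping: order the weights of $L_{\mathbb C}(\lambda)$ appearing in $v$, take a maximal one $\mu_0$, and observe that if $\mu_0\ne\lambda$ then $\mu_0$ is not extremal, so there is $e\in\mathfrak n^+$ with $e\cdot(\text{that component})\ne 0$ and of strictly higher $L_{\mathbb C}(\lambda)$-weight, which cannot be cancelled by the other terms—contradicting $e\cdot v=0$. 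Once this is in hand both inequalities follow immediately. I would present the multiplicity statement first as a warm-up and then the length statement, noting the whole thing is elementary highest-weight theory and deferring details to the reader as the excerpt already signals.
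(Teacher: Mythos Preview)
The paper itself leaves this lemma to the reader, so there is no proof in the text to compare against; I will just assess your argument.

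Your proof of the \emph{length} inequality is correct. The key point---that the projection $(L_{\mathbb C}(\lambda)\otimes L_{\mathbb C}(\nu))^{\mathfrak n^+}\to L_{\mathbb C}(\lambda)_\lambda\otimes L_{\mathbb C}(\nu)\cong L_{\mathbb C}(\nu)$ is injective---is established by your maximal-weight argument: writing $w=\sum_\mu w_\mu$ with $w_\mu\in L_{\mathbb C}(\lambda)_\mu\otimes L_{\mathbb C}(\nu)$ and taking $\mu_0$ maximal with $w_{\mu_0}\neq 0$, the $L_{\mathbb C}(\lambda)_{\mu_0+\alpha}\otimes L_{\mathbb C}(\nu)$ component of $e_\alpha w$ is $(e_\alpha\otimes 1)w_{\mu_0}$, forcing each first-tensor-factor in $w_{\mu_0}$ to be $\mathfrak n^+$-invariant, hence $\mu_0=\lambda$. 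Good.

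Your argument for the \emph{multiplicity} bound, however, is not complete. You cycle through several approaches, and the one you settle on---bounding the multiplicity by $\dim M_\tau$ and then asserting $\dim M_\tau\leq\dim L_{\mathbb C}(\tau)$---is false in general. For $\mathfrak{sl}_2$ with $\lambda=\nu=2$ and $\tau=0$, the zero-weight space of $L_{\mathbb C}(2)\otimes L_{\mathbb C}(2)$ has dimension $3$, while $\dim L_{\mathbb C}(0)=1$. The inclusion ``$\tau-\mathrm{wt}(L_{\mathbb C}(\nu))\subseteq\mathrm{wt}(L_{\mathbb C}(\tau))$'' you invoke has no reason to hold.

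The fix is already in your hands: you wrote down the adjunction
\[
[L_{\mathbb C}(\lambda)\otimes L_{\mathbb C}(\nu):L_{\mathbb C}(\tau)]
=\dim\Hom_{\mathfrak g_{\mathbb C}}\bigl(L_{\mathbb C}(\tau)\otimes L_{\mathbb C}(\nu)^*,\,L_{\mathbb C}(\lambda)\bigr)
=[L_{\mathbb C}(\tau)\otimes L_{\mathbb C}(\nu)^*:L_{\mathbb C}(\lambda)],
\]
and this last multiplicity is at most $\mathrm{length}\bigl(L_{\mathbb C}(\tau)\otimes L_{\mathbb C}(\nu)^*\bigr)\leq\dim L_{\mathbb C}(\tau)$ by the length inequality you have already proved. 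So reverse the order: prove the length bound first, then deduce the multiplicity bound in one line.
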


This result fails in positive characteristic, e.~g., let $G=SL_2(k)$ for  characteristic $p>0$. Identify
$X^+$ with $\mathbb N$. For $r\geq 0$, put $V(r)=L(1)\otimes \St(r)$, where $\St(r):=\bigotimes_{i=0}^rL(p-1)^{(i)}$
is irreducible. Since $L(1)$ and
$L(p-1)$ are isomorphic to the Weyl modules $\Delta(1)$ and $\Delta(p-1)$, respectively, $L(1)\otimes L(p-1)$ has a Weyl filtration with sections $\Delta(p)$ and $\Delta(p-2)\cong L(p-2)$ (Clebsch-Gordan). Thus, $V(r)$ has $V(r-1)^{(1)}$ and
the irreducible module $S(r):=L(p-2)\otimes L(p-1)^{(1)}\otimes\cdots\otimes L(p-1)^{(r)}$ as subquotient modules.
Continuing, we see $V(r)$ has $S(1)^{(r-1)}, S(2)^{(r-2)},\cdots, S(r)$ among its irreducible composition factors.
Now let $r\to\infty$.

\medskip
 If $x,y\in W_a$, and
$m\in\mathbb N$, let
$$c_{x,y}^{[m]}=\,{\text{\rm coefficient of}}\,t^m\,\,{\text{\rm in the Kazhdan-Lusztig polynomial}}\, P_{x,y}\,{\text{\rm for}}\, W_a.$$ Thus, $c_{x,y}^{[m]}=0$
unless $x\leq y$ in the partial ordering on $W_a$. If $x< y$, then $P_{x,y}\in{\mathbb N}[t]$ is a polynomial in $t$ of (even) degree
$\leq \ell(y)-\ell(x)-1$, $c^{[m]}_{x,y}=0$ unless $0\leq m\leq \ell(y)-\ell(x)-1$.

The following result is a weak version (and corollary) of Theorem \ref{KL sum bounds}. However, its proof here is quite different and is potentially
useful.
\begin{thm}\label{bounding KL coefficients} Let $m\in\mathbb N$.  There exists an integer $d(\Phi,m)$ such that if $x,y\in W^+_a$ with $x\leq y$, then
$$c_{x,y}^{[\ell(y)-\ell(x)-m]}\leq d(\Phi,m).$$
\end{thm}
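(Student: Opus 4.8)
The plan is to bound the coefficient $c_{x,y}^{[\ell(y)-\ell(x)-m]}$ by relating it to a bounded-degree piece of homological data that we already control. Recall from Remark \ref{quantumcoho} that, for $x,y\in W^+_{a,l}$ with common anti-dominant weight $\lambda^-$, the Kazhdan-Lusztig polynomial $P_{x,y}$ encodes the graded dimensions $\dim\Ext^n_{U_\zeta}(L_\zeta(x\cdot\lambda^-),\nabla_\zeta(z\cdot\lambda^-))$; more precisely, the coefficient of $t^i$ in $t^{\ell(y)-\ell(x)}\bar p_{x,y}$ is $P_{x,y}$ in $q=t^2$, so the coefficient $c_{x,y}^{[j]}$ of $t^{2j}$ in $P_{x,y}$ equals $\dim\Ext^{\ell(y)-\ell(x)-2j}_{U_\zeta}(L_\zeta(x\cdot\lambda^-),\nabla_\zeta(y\cdot\lambda^-))$ (choosing $z=y$, the only term contributing in the appropriate degree). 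Setting $j$ so that $\ell(y)-\ell(x)-2j = m'$ is small — concretely, we want $c_{x,y}^{[\ell(y)-\ell(x)-m]}$, which after translating through the parity bookkeeping corresponds to an $\Ext^{m''}_{U_\zeta}$ with $m''$ bounded in terms of $m$ — the problem reduces to showing that $\dim\Ext^{m''}_{U_\zeta}(L_\zeta(\mu),\nabla_\zeta(\nu))$ is bounded by a constant depending only on $\Phi$ and $m''$, uniformly over all $l>h$ and all linked pairs $\mu,\nu$.

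First I would make the degree translation precise: given $x\leq y$ in $W^+_a$, write $d=\ell(y)-\ell(x)$; the coefficient $c_{x,y}^{[d-m]}$ is nonzero only if $d-m$ is even and $0\le d-m\le d-1$, and in that case it equals $\dim\Ext^{m}_{U_\zeta}(L_\zeta(x\cdot\lambda^-),\nabla_\zeta(y\cdot\lambda^-))$ for $l$ large (using the isomorphism $\varepsilon_l$ to pass between $W_a$ and $W_{a,l}$, and the fact that the polynomial $P_{x,y}$ for $(W_a,S_a)$ is unchanged). Here one must be slightly careful because $m$ in the statement indexes from the top of the polynomial while the homological degree indexes from the bottom; but since $\deg P_{x,y}\le (d-1)$ as a polynomial in $t$ and the nonzero coefficients sit in positions $d-2j$, a top-coefficient at distance $m$ from the leading term corresponds to a fixed homological degree $m$ when $m$ is even (and to a vanishing coefficient when $m$ is odd, which is consistent with $\mu(x,y)=0$ unless $x,y$ have opposite parity). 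So it suffices to bound $\dim\Ext^m_{U_\zeta}(L_\zeta(\mu),\nabla_\zeta(\nu))$.

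For the bound on $\Ext^m_{U_\zeta}(L_\zeta(\mu),\nabla_\zeta(\nu))$, I would use the Hochschild-Serre mechanism (\ref{HochSerre}): $\Ext^m_{U_\zeta}(L_\zeta(\mu),\nabla_\zeta(\nu)) \cong \Ext^m_{u_\zeta}(L_\zeta(\mu),\nabla_\zeta(\nu))^{U(\fg_{\mathbb C})}$, and then bound the $u_\zeta$-cohomology by a dimension count. The key point is that $u_\zeta$-cohomology in degree $m$ can be computed from a minimal projective resolution whose $m$-th term is built from the PIMs of $u_\zeta$, and the number of such PIMs and their structure depends only on $\Phi$ (not on $l$), since $u_\zeta$ has the ``same size'' combinatorially — its simple modules are indexed by $X/lX$ restricted weights but the relevant bounded-degree homological data stabilizes. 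Concretely, I expect to invoke (or reprove in this more general context) the analogue of Lemma \ref{Ebound}: that $\dim\Ext^m_{u_\zeta}(L,L')$ is bounded by a constant depending only on $m$ and $\Phi$, which is exactly the content promised ``for another proof in a more general context, see \S6 below'' after Lemma \ref{Ebound}. Taking $U(\fg_{\mathbb C})$-invariants only decreases dimension, so the same bound transfers to $U_\zeta$, and thence to $c_{x,y}^{[d-m]}$.

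The main obstacle will be making the last step — the uniform (in $l$) bound on $\dim\Ext^m_{u_\zeta}$ — rigorous without circularity, since the natural proof of that fact may itself go through Kazhdan-Lusztig combinatorics. The cleanest route is probably to bound the composition length of the $u_\zeta$-PIMs (or of enough of a projective resolution) by a Kostant-partition-function-type estimate $\mathfrak P(\text{something depending on }\Phi)$ that is manifestly $l$-independent once $l>h$: the restricted region has bounded size relative to $\Phi$, and the PIM $Q_\zeta(\lambda)$ for $\lambda$ restricted has character controlled by tensoring Steinberg with a bounded-weight module, exactly as in the proof of Lemma \ref{Lemma52} but in the quantum setting where the analogue of Lemma \ref{biglengthlemma} applies. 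So I would set up the argument to depend on that length bound, cite the mechanism of \S6, and deduce $d(\Phi,m)$ explicitly as, say, a product of $|W|$, a binomial coefficient in $m$, and a Kostant-partition-function value — crude but finite and independent of everything except $\Phi$ and $m$.
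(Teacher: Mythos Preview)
Your overall architecture is right and matches the paper: identify $c_{x,y}^{[\ell(y)-\ell(x)-m]}$ with $\dim\Ext^m_{U_\zeta}(\Delta_\zeta(x\cdot\lambda^-),L_\zeta(y\cdot\lambda^-))$ (equivalently $\dim\Ext^m_{U_\zeta}(L_\zeta(y\cdot\lambda^-),\nabla_\zeta(x\cdot\lambda^-))$) via Remark~\ref{quantumcoho}, then bound this $\Ext^m$ using the $u_\zeta$-level via (\ref{HochSerre}). The degree bookkeeping you worry about really does collapse to ``the coefficient at distance $m$ from the top equals $\dim\Ext^m$''; your hedging there is unnecessary.

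Where you diverge from the paper is in the last step, and you make your life harder than needed. You insist on a bound ``uniformly over all $l>h$'' and then worry about circularity and about whether $u_\zeta$-data ``stabilizes''. But the Kazhdan--Lusztig polynomials $P_{x,y}$ for $W_a$ do not depend on $l$ at all; the isomorphism $\varepsilon_l:W_a\to W_{a,l}$ lets you pick a \emph{single} $l>h$ once and for all and compute there. That is exactly what the paper does: fix such an $l$, let $M=\bigoplus_i L_\zeta(\lambda_i)$ be the direct sum of the restricted irreducibles (one per $l$-restricted alcove), put $M'=\Ext^m_{u_\zeta}(M,M)^{(-1)}$, and then use (\ref{HochSerre}) plus Lemma~\ref{boundinglemma} to get $\dim\Ext^m_{U_\zeta}(L_\zeta(\lambda),L_\zeta(\nu))\le \dim\Hom_{U({\mathfrak g}_{\mathbb C})}(L_{\mathbb C}(\lambda^\dagger)\otimes L_{\mathbb C}(\nu^{\dagger\star}),M')\le \dim M'$. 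One then passes from $\Ext^m(\Delta_\zeta,L_\zeta)$ to $\Ext^m(L_\zeta,L_\zeta)$ via the nonnegativity in Remark~\ref{quantumcoho}, and takes $d(\Phi,m)=\dim M'$. No PIM-length argument, no uniformity in $l$, no circularity concern.

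Your alternative endgame---bounding via lengths of projectives---is not wrong, but it is essentially the machinery behind the later (and stronger) Theorem~\ref{bounding sum of exts} and Theorem~\ref{KL sum bounds}, which rest on Lemma~\ref{biglengthlemma}. The paper itself flags this: the present theorem is described as a weak corollary of Theorem~\ref{KL sum bounds} with a ``quite different'' direct proof. So your route would work, but it proves more than needed and requires the nontrivial uniform-in-$l$ length bound of Lemma~\ref{biglengthlemma}; the paper's proof here is deliberately lighter. One caution about your sketch: the claim that ``the number of such PIMs and their structure depends only on $\Phi$'' is false as stated---the number of $u_\zeta$-simples is $l^{\mathrm{rk}}$---so if you really wanted uniformity in $l$ you would need the genuine argument of Lemma~\ref{biglengthlemma}, not a hand-wave.
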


\begin{proof}Pick an integer $l> h$, which is odd and not divisible by 3 if $\Phi$ has type $G_2$, and let $U_\zeta$ be  as in \S2(2).  Using the isomorphism $\epsilon:W_a\overset\sim\to W_{a,l}$, it suffices to prove the result for
$W^+_{a,l}$---that is, we can assume that $c_{x,y}^{[\ell(y)-\ell(x)-m]}$ is a coefficient in the
Kazhdan-Lusztig polynomial $P_{x,y}$ for $W^+_{a,l}$. Let $C_1,\cdots, C_s$ be the $l$-restricted alcoves
in $\mathbb E$. For $1\leq i\leq s$, let $\lambda_i\in C_i$ be the unique
dominant weight $W_{a,l}$-linked to $-2\rho\in C_l^-$. Let $M=\bigoplus_{i=1}^s L_\zeta(\lambda_i)$, and consider the
$U({\mathfrak g}_{\mathbb C})=U_\zeta//u_\zeta$-module $M':=\Ext^m_{u_\zeta}(M,M)^{(-1)}$. For dominant weights $\lambda=\lambda_0+l\lambda^\dagger$ and $\nu=\nu_0+l\nu^\dagger$ ($\lambda_0,\nu_0\in X_1^+$, $\lambda^\dagger,\mu^\dagger\in X^+$),
$$\dim\Ext^m_{U_\zeta}(L_\zeta(\lambda),L_\zeta(\nu))\leq\dim \Hom_{U({\mathfrak g}_{\mathbb C})}(L_{\mathbb C}(\lambda^\dagger)\otimes
L_{\mathbb C}(\nu^{\dagger\star}),M')\leq \dim M'$$
by Lemma \ref{boundinglemma}, putting $\nu^{\dagger\star}:=-w_0\nu^\dagger$.
Now let $\nu=x\cdot(-2\rho)$ and $\lambda=y\cdot(-2\rho)$. Then
$$ c^{[\ell(y)-\ell(x)-m]}_{x,y}=\dim \Ext^m_{U_\zeta}(\Delta_\zeta(\nu),L_\zeta(\lambda))\leq\dim\Ext^m_{U_\zeta}(L_\zeta(\nu),L_\zeta(\lambda))
\leq \dim M'$$
by Remark \ref{quantumcoho}.
So $d(\Phi,m):=\dim M'$ works.\end{proof}

We now work with $U_\zeta$ where $\zeta=\sqrt[l]{1}$  as per \S2(2). For $\lambda\in X^+$, let $Q_\zeta(\lambda)$ be the
projective cover of $L_\zeta(\lambda)$ in $U_\zeta$-mod.

\begin{lem}\label{biglengthlemma}  There is a constant $C(\Phi)$, such that, given any $\lambda\in X^+$,
 $${\text{\rm length}}\left(Q_\zeta(\lambda)\right)\leq C(\Phi)$$
 for any quantum enveloping algebra $U_\zeta$ of type $\Phi$ for $l$ odd,  not divisible by 3 in case $\Phi$ has type $G_2$, and otherwise arbitrary. \end{lem}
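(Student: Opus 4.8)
The plan is to bound $\operatorname{length}(Q_\zeta(\lambda))$ by reducing, via the Frobenius quotient $U_\zeta//u_\zeta \cong U(\mathfrak g_{\mathbb C})$, to a question about the little quantum group $u_\zeta$ together with the completely reducible representation theory of $U(\mathfrak g_{\mathbb C})$. First I would recall that for $\lambda = \lambda_0 + l\lambda^\dagger$ with $\lambda_0 \in X_1^+$ and $\lambda^\dagger \in X^+$, the $U_\zeta$-projective cover decomposes compatibly with the little quantum group: one has $Q_\zeta(\lambda) \cong \hat Q_\zeta(\lambda_0) \otimes L_{\mathbb C}(\lambda^\dagger)^{(1)}$ (as on the algebraic-group side, this is the Ballard/Jantzen-type tensor identity in the quantum setting; here $\hat Q_\zeta(\lambda_0)$ is the injective/projective hull of $L_\zeta(\lambda_0)$ as a $u_\zeta$-module, equipped with its natural $U_\zeta$-structure). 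The composition factors of $Q_\zeta(\lambda)$ are then governed by the composition factors of $\hat Q_\zeta(\lambda_0)$ as a $u_\zeta$-module, tensored up, together with $U(\mathfrak g_{\mathbb C})$-decompositions. Since there are only finitely many $l$-restricted weights $\lambda_0$ (bounded in number and in the dimension of $\hat Q_\zeta(\lambda_0)$ by data depending only on $\Phi$, because $u_\zeta$ has dimension $l^{|\Phi|}$ and $l$ can in principle vary — but see below), the only genuinely infinite behavior comes from the $L_{\mathbb C}(\lambda^\dagger)^{(1)}$ factor.

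The key input is precisely the phenomenon isolated in the paragraph before the lemma and in Lemma \ref{boundinglemma}: if $S$ is an irreducible $u_\zeta$-module appearing in $\hat Q_\zeta(\lambda_0)$, then as a $U_\zeta$-module $\hat Q_\zeta(\lambda_0)$ has a filtration whose sections look like $S \otimes (\text{a } U(\mathfrak g_{\mathbb C})\text{-module})^{(1)}$, and the $U(\mathfrak g_{\mathbb C})$-module occurring is a subquotient of a tensor product of finitely many $L_{\mathbb C}(\mu_i)$ with the $\mu_i$ drawn from a finite set determined by $\Phi$ and $l$. By Lemma \ref{boundinglemma}, the length of such a $U(\mathfrak g_{\mathbb C})$-module is bounded by $\min$ of the dimensions of the tensor factors, hence by a constant depending only on $\Phi$ and $l$ — and crucially \emph{not} on $\lambda^\dagger$, because tensoring the whole picture with $L_{\mathbb C}(\lambda^\dagger)^{(1)}$ multiplies lengths in a controlled way: $\operatorname{length}(V \otimes L_{\mathbb C}(\lambda^\dagger)) \le \dim V \cdot \operatorname{length}(\text{something})$? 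No — rather, one uses that $Q_\zeta(\lambda_0 + l\lambda^\dagger)$ itself \emph{is} $\hat Q_\zeta(\lambda_0)\otimes L_{\mathbb C}(\lambda^\dagger)^{(1)}$, and its $U_\zeta$-composition factors are exactly $L_\zeta(\tau_0) \otimes L_{\mathbb C}(\tau^\dagger)^{(1)}$ where $L_\zeta(\tau_0)$ runs over $u_\zeta$-composition factors of $\hat Q_\zeta(\lambda_0)$ and $L_{\mathbb C}(\tau^\dagger)$ runs over $U(\mathfrak g_{\mathbb C})$-composition factors of $N_{\tau_0} \otimes L_{\mathbb C}(\lambda^\dagger)$, with $N_{\tau_0}$ the $U(\mathfrak g_{\mathbb C})$-multiplicity space — which has dimension bounded independently of $\lambda^\dagger$. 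So $\operatorname{length}(Q_\zeta(\lambda)) \le \sum_{\tau_0} \dim N_{\tau_0} \cdot \operatorname{length}(N_{\tau_0}' \otimes L_{\mathbb C}(\lambda^\dagger))$, and here Lemma \ref{boundinglemma} gives $\operatorname{length}(N_{\tau_0}' \otimes L_{\mathbb C}(\lambda^\dagger)) \le \dim N_{\tau_0}'$, a bound in terms of $\Phi$ and $l$ alone.

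The remaining point — and the one I expect to be the main obstacle — is eliminating the dependence on $l$ to get a constant $C(\Phi)$ valid for all admissible $l$ simultaneously. For this I would invoke the standard fact that, for $l \ge h$ (or even just $l$ large relative to $h$), the decomposition behavior of $U_\zeta$-mod in a fixed linkage class is governed by the affine Weyl group $W_{a,l} \cong W_a$ independently of $l$: the relevant translation-functor and wall-crossing combinatorics, and hence the $u_\zeta$-length of $\hat Q_\zeta(\lambda_0)$ measured in terms of alcove geometry, stabilize. More precisely, the $u_\zeta$-composition-factor multiplicities of $\hat Q_\zeta(\lambda_0)$, and the dimensions of the $U(\mathfrak g_{\mathbb C})$-multiplicity spaces $N_{\tau_0}$, are expressible through Kazhdan–Lusztig polynomials for $W_a$ evaluated at $1$ (cf. Remark \ref{quantumcoho}), which do not depend on $l$; combined with Lemma \ref{Ebound} and the finiteness of the relevant alcove combinatorics, this yields a single bound $C(\Phi)$. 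For small $l$ not covered by this asymptotic argument one checks directly that only finitely many cases arise (finitely many $l < h'$ for the threshold $h'$, each with a finite-dimensional $u_\zeta$), absorbing them into the constant. I would close by remarking, as the statement's surrounding text does, that the bound so obtained is crude but suffices.
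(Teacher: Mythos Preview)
Your strategy is essentially the paper's: write $Q_\zeta(\lambda)\cong Q_\zeta(\lambda_0)\otimes L_{\mathbb C}(\lambda^\dagger)^{(1)}$, bound $\operatorname{length}(Q_\zeta(\lambda_0))$ for restricted $\lambda_0$ independently of $l$ via Kazhdan--Lusztig combinatorics for $W_a$, then apply Lemma~\ref{boundinglemma} to show tensoring with $L_{\mathbb C}(\lambda^\dagger)^{(1)}$ keeps the length bounded. Where you are vague on the $l$-independence, the paper is concrete: $Q_\zeta(\lambda_0)$ has highest weight $2(l-1)\rho+w_0\lambda_0$, so a $\Delta_\zeta$-filtration (via Brauer--Humphreys reciprocity) involves only $\Delta_\zeta(\nu)$ with $\nu\leq 2(l-1)\rho$ --- a bounded set of alcoves --- with multiplicities given by inverse Kazhdan--Lusztig polynomials $Q_{y,x}$ for finitely many $x,y\in W_a$; the same weight bound yields $\nu^\dagger\leq'\rho$, bounding $\dim L_{\mathbb C}(\nu^\dagger)$ directly and hence the multiplier coming from Lemma~\ref{boundinglemma}. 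You omit the singular-$\lambda$ case, which the paper handles in one line by translation functors from the regular block.
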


 \begin{proof} We first show that there is a constant bounding the length of
 any $Q_{\zeta}(\lambda_0)$ for $\lambda_0\in X^+_{\text{reg},l}\cap X^+_{1,l}$ for
 all $l$.
 For fixed $l$,  $|X_{1,l}^+|<\infty$, so it suffices to give
 is a bound that works universally for all $l\geq h$.  It is known that $Q_\zeta(\lambda_0)$ has highest weight $2(l-1)\rho+w_0\lambda_0$. For $\nu\in \Xregl$, the multiplicity
 of $\Delta_\zeta(\nu)$ as a section in a $\Delta_\zeta$-filtration of $Q_\zeta(\lambda_0)$ equals, by Brauer-Humphreys
 reciprocity, the multiplicity
$[\nabla_\zeta(\nu):L_\zeta(\lambda_0)]=[\Delta_\zeta(\nu):L_\zeta(\lambda_0)].$
 If this multiplicity $\not=0$, necessarily $\nu\in\Xregl$ and $\nu\leq 2(l-1)\rho+w_0\lambda_0\leq 2(l-1)\rho$.
 Thus, the number of possible $\nu$
 is absolutely bounded by some integer independent of $l$.  But
 $[\Delta_\zeta(\nu):L_\zeta(\lambda_0)]$ is expressed in terms of the coefficients of inverse
 Kazhdan-Lusztig polynomials $Q_{y,x}$, $x,y\in W_{a}$ satisfying $x\cdot_l\lambda^-=\lambda_0$ and $y\cdot_l\lambda^-=\nu$,
 $\lambda^-\in C^-_l$.  (For a discussion of the $Q_{x,y}$, see, e.~g., \cite[\S7.3]{DDPW}.) Since, independently of $l$, there are only a finite number of possible $x,y\in W_a$, these multiplicities are also bounded independently of $l$.

 Suppose that $\lambda=\lambda_0+l\lambda^\dagger$ as after \ref{padicexpansion}.  Then
 $Q_\zeta(\lambda)=Q_\zeta(\lambda_0)\otimes L_{\mathbb C}(\lambda^\dagger)^{(1)}$. If $\nu=\nu_0+l\nu^\dagger\in \Xregl$ is so that
 $L_\zeta(\nu)$ is a composition factor of $Q_\zeta(\lambda_0)$, then $\nu_0+l\nu_1\leq 2(l-1)\rho+w_0\lambda_0\leq (l-1)\rho$.
 Thus, $\nu_1\leq'\rho$, so that there is a bound on the possible  $\dim\, L_{\mathbb C}(\nu_1)$. By Lemma \ref{boundinglemma},
 this integer bounds the number of composition factors $L(\tau)$ of $L_{\mathbb C}(\nu_1)\otimes L_{\mathbb C}(\lambda^\dagger)$.
 For such a $\tau$, $L_\zeta(\nu_0)\otimes L_{\mathbb C}(\tau)^{(1)}$ is an irreducible $U_\zeta$-module. Hence, there is an absolute bound on the number of composition factors of any $Q_\zeta(\lambda)$.

 Thus, the result holds for $Q_\zeta(\lambda)$ with  $\lambda\in\Xregl$. However, if $\lambda\notin\Xregl$, then $Q_\zeta(\lambda)$ is a direct summand of the translate of some $Q_\zeta(\lambda^\#)$, with $\lambda^\#\in\Xregl$.
 Since translation operators from $l$-regular weights to $l$-singular weights preserve irreducible modules (or map them
 to zero), the length of $Q_\zeta(\lambda)$ is bounded by the length of $Q_\zeta(\lambda^\#)$, and the result is completely
 proved.
 \end{proof}

The following is an immediate consequence.
\begin{cor}  For any $\lambda\in X^+$,
${\text{\rm length}}(\Delta_\zeta(\lambda))\leq C(\Phi)$
for the standard modules $\Delta_\zeta(\lambda)$ for any quantum group $U_\zeta$ of type $\Phi$.\end{cor}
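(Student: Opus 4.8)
The plan is to deduce the corollary directly from Lemma \ref{biglengthlemma} by recalling that every standard module $\Delta_\zeta(\lambda)$ embeds as a submodule of the projective cover $Q_\zeta(\lambda)$, so its composition length cannot exceed that of $Q_\zeta(\lambda)$. First I would recall the standard fact that, in the highest-weight category $U_\zeta\text{--mod}$, the projective cover $Q_\zeta(\lambda)$ has a $\Delta_\zeta$-filtration in which $\Delta_\zeta(\lambda)$ occurs exactly once, at the top, with all other sections $\Delta_\zeta(\nu)$ having $\nu>\lambda$ (Brauer--Humphreys reciprocity, already invoked in the proof of Lemma \ref{biglengthlemma}). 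In particular there is a short exact sequence $0\to Q'\to Q_\zeta(\lambda)\to\Delta_\zeta(\lambda)\to 0$ with $Q'$ filtered by the remaining $\Delta_\zeta(\nu)$'s; but more simply, since $\Delta_\zeta(\lambda)$ is a quotient of $Q_\zeta(\lambda)$ with $L_\zeta(\lambda)$ as its head, one has ${\text{\rm length}}(\Delta_\zeta(\lambda))\leq{\text{\rm length}}(Q_\zeta(\lambda))$.

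Then I would simply quote Lemma \ref{biglengthlemma}: ${\text{\rm length}}(Q_\zeta(\lambda))\leq C(\Phi)$, with $C(\Phi)$ the constant produced there, valid for all $l$ odd (and coprime to $3$ when $\Phi$ has type $G_2$) and all $\lambda\in X^+$. Combining the two inequalities gives ${\text{\rm length}}(\Delta_\zeta(\lambda))\leq C(\Phi)$, which is exactly the assertion.

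There is essentially no obstacle here, since this is a formal consequence; the only point meriting a word is why $\Delta_\zeta(\lambda)$ is a quotient of (equivalently, since it is a Weyl module, related by a filtration argument to) $Q_\zeta(\lambda)$. The cleanest route is: $L_\zeta(\lambda)$ is the head of $\Delta_\zeta(\lambda)$, hence there is a surjection $Q_\zeta(\lambda)\twoheadrightarrow L_\zeta(\lambda)$ which, by projectivity of $Q_\zeta(\lambda)$ in the truncated subcategory generated by weights $\leq\lambda$ together with the universal property of $\Delta_\zeta(\lambda)$, factors as $Q_\zeta(\lambda)\twoheadrightarrow\Delta_\zeta(\lambda)\twoheadrightarrow L_\zeta(\lambda)$. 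A surjection of modules can only decrease composition length, so ${\text{\rm length}}(\Delta_\zeta(\lambda))\leq{\text{\rm length}}(Q_\zeta(\lambda))\leq C(\Phi)$, completing the proof.
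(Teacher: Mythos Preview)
Your proof is correct and is exactly the argument the paper has in mind: the paper states the corollary as ``an immediate consequence'' of Lemma \ref{biglengthlemma} without further comment, and the point is precisely that $\Delta_\zeta(\lambda)$ is a quotient of $Q_\zeta(\lambda)$ (as the top section of its $\Delta_\zeta$-filtration), so its length is bounded by $\text{length}(Q_\zeta(\lambda))\leq C(\Phi)$.
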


We next have the following application. The ``sum formulation" here and in Theorem \ref{Gsum} is inspired by a somewhat
analogous
use of sums in \cite{GT}.

\begin{thm}\label{bounding sum of exts} For a fixed $n$, there is a constant $C'(\Phi,n)$ such that, for all $\lambda\in X^+$,
 \begin{equation}\label{boundedsum}
 \sum_\nu\dim\Ext^n_{U_\zeta}(L_\zeta(\lambda),L_\zeta(\nu))\leq C'(\Phi,n)\end{equation}
 for any quantum group $U_{\zeta}$ of type $\Phi$ ($l$ arbitrary).
\end{thm}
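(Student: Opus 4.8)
The plan is to derive Theorem~\ref{bounding sum of exts} as a fairly direct consequence of Lemma~\ref{biglengthlemma}, together with the standard homological machinery for the highest weight category $U_\zeta$-mod. First I would reduce to the regular block: using translation functors between a fixed $l$-singular block and an $l$-regular block (which send irreducibles to irreducibles or to zero, and are exact), any $\Ext^n_{U_\zeta}(L_\zeta(\lambda),L_\zeta(\nu))$ is controlled by a corresponding $\Ext^n$ in a regular block, so it suffices to bound $\sum_\nu\dim\Ext^n_{U_\zeta}(L_\zeta(\lambda),L_\zeta(\nu))$ with $\lambda$ ranging over $l$-regular weights. Moreover, by the linkage principle the sum over $\nu$ is really a sum over the single block containing $\lambda$.

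Next I would set up the projective-resolution bookkeeping. Let $Q_\bullet\to L_\zeta(\lambda)$ be a minimal projective resolution in the block $\mathcal B$ of $\lambda$. Then $\sum_\nu\dim\Ext^n_{U_\zeta}(L_\zeta(\lambda),L_\zeta(\nu))$ equals the number of indecomposable summands (counted with multiplicity) of the $n$-th term $Q_n$, i.e. the total multiplicity $\sum_\nu[\rad Q_{n-1}/\rad^2 Q_{n-1}:L_\zeta(\nu)]$ appearing as composition factors of the $n$-th radical layer of $Q_0$. The key numerical input is that each indecomposable projective $Q_\zeta(\mu)$ in the block has composition length at most $C(\Phi)$ by Lemma~\ref{biglengthlemma}; in particular its radical series has length at most $C(\Phi)$, and each radical layer has at most $C(\Phi)$ composition factors. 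An induction on $n$ then shows that $Q_n$ is a sum of at most $C(\Phi)^{n}$ (or some similar crude bound like $(C(\Phi)-1)^n$) indecomposable projectives: $Q_0=Q_\zeta(\lambda)$ has a single summand; given that $Q_{n-1}$ has at most $C(\Phi)^{n-1}$ summands, the head of $\Omega^n L_\zeta(\lambda)=\ker(Q_{n-1}\to Q_{n-2})$ is a submodule of $\rad Q_{n-1}$, which has at most $C(\Phi)\cdot C(\Phi)^{n-1}$ composition factors, hence its head has at most that many simple summands, giving the bound on the number of summands of $Q_n$. Taking $C'(\Phi,n):=C(\Phi)^{n+1}$ (or whatever the induction produces) gives the theorem, and the bound visibly depends only on $\Phi$ and $n$, not on $l$ or $\lambda$, since $C(\Phi)$ does not.

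I would then remark that the resulting constant is extremely crude, consistent with the paper's own comment that the bounds from this argument are rough. The main obstacle is not any single hard step but making sure the reductions are clean: specifically, that the translation-functor reduction from singular to regular blocks genuinely preserves the \emph{sum} over $\nu$ (one must track that translation is a two-sided adjoint and biexact, so it converts an $\Ext^n$ into an $\Ext^n$ and does not merge infinitely many $\nu$'s into one), and that "length of $Q_\zeta(\mu)\le C(\Phi)$" does bound the number of simple summands in each radical layer --- which is immediate since a composition series refines the radical series. There is also the bookkeeping point that $\Ext^n_{U_\zeta}(L_\zeta(\lambda),L_\zeta(\nu))$ is computed by $\Hom$ into $L_\zeta(\nu)$ from the $n$-th term of the \emph{minimal} resolution, so one needs minimality of $Q_\bullet$; this is standard in a finite-dimensional (or appropriately finite) highest weight category. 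Once these points are in place the proof is essentially the displayed induction.
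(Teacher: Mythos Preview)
Your argument is correct and is essentially the same as the paper's: take a minimal projective resolution $P_\bullet\to L_\zeta(\lambda)$, use that $\sum_\nu\dim\Ext^n=(\text{number of indecomposable summands of }P_n)$, and bound this inductively by $C(\Phi)^n$ via Lemma~\ref{biglengthlemma}. One small simplification: your initial reduction to a regular block via translation functors is unnecessary, since Lemma~\ref{biglengthlemma} already gives $\text{length}(Q_\zeta(\lambda))\le C(\Phi)$ for \emph{all} $\lambda\in X^+$, singular or not, so the minimal-resolution argument applies directly in every block; also, the paper's induction yields $C(\Phi)^n$ rather than $C(\Phi)^{n+1}$, since $P_0$ has exactly one summand.
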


 \begin{proof}If $P_\bullet\twoheadrightarrow L_\zeta(\lambda)$ is a minimal projective resolution,
 $\dim\Ext^n_{U_\zeta}(L_\zeta(\lambda),L_\zeta(\nu))=\dim\Hom_{U_\zeta}(P_n,L_\zeta(\nu))$
  equals the number of times $Q_\zeta(\nu)$ appears as a direct summand of $P_n$. The number of
 indecomposable summands of $P_0=Q_\zeta(\lambda)$ is 1. For $P_1$ the number
  of indecomposable summands is (strictly) bounded by $\text{length}(P_0)\leq C(\Phi)$. Then the number of indecomposable summands of $P_2$ is bounded by
 $C(\Phi)^2$, \dots, and, finally, the number of indecomposable summands of $P_n$ is bounded by $C(\Phi)^n$. Thus,
 $C'(\Phi,n)=C(\Phi)^n$ works.\end{proof}

\begin{rem}\label{bigremark} We briefly indicate further results which can be found in \cite{PS6} and depend upon
\cite{PS4}.  For regular domiant weights $\lambda=x\cdot\lambda^-,\nu=y\cdot\lambda^-$, write $\mu(\lambda,\nu):=\mu(x,y)$.
By Theorem \ref{bounding sum of exts}, there are only
finitely many $n$-tuples $(\lambda_1,\cdots,\lambda_{n-1},\nu)$ of dominant weights for which the non-negative integers $\mu(\lambda,\lambda_1), \cdots, \mu(\lambda_{n-1},\nu)$
are all nonzero. Also, the dimensions of the $\Ext^1_{U_\zeta}(L_\zeta(\lambda_i),L_\zeta(\lambda_{i+1}))$ (with $\lambda_0=\lambda$ and $\lambda_n=\nu$ and $0\leq i\leq n-1$) are all uniformly
bounded by an integer independent of the weights and $l$. Thus, the right-hand side of  $$\sum_\nu\dim\Ext^n_{U_\zeta}(L_\zeta(\lambda),L_\zeta(\nu))\leq\sum_{(\lambda_1,\cdots,\lambda_{n-1},\nu)}\mu(\lambda,\lambda_1)\mu(\lambda_1,\lambda_2)
\cdots\mu(\lambda_{n-1},\nu)<\infty.$$

 This discussion suggests the question of determining
$$R_\Phi:=\text{\rm Max}_{x\in W_a^+}\left( \sum_{y\in W_a^+} \mu(x,y)\right).$$
Theorem \ref{bounding sum of exts} implies this maximum is finite depending on $\Phi$, but the
argument does not give a good bound (which remains an open problem).  Theorem \ref{bounding sum of exts} gives an
exponential bound
$\sum_\nu\dim\Ext^n_{U_\zeta}(L_\zeta(\lambda),L_\zeta(\nu))\leq R_\Phi^n.$
   However, dropping  the sum over $\nu$, \cite{PS6} gives a polynomial growth bound $\dim\Ext^n_{U_\zeta}(L_\zeta(\lambda),L_\zeta(\nu))\leq D(\Phi)n^{|\Phi|-1},$
with $D(\Phi)$ a constant depending on $\Phi$, but not on $\lambda$ or $\nu$.

Determination of $\sum_{y\in W_a^+} \mu(w_0,y)$
(or a good bound for it) is an open problem, related to bounding $1$-cohomology (and the Guralnick conjecture).
It is currently open whether $\mu(w_0,y)$ is bounded over all $\Phi$, with $3$ the largest
known value; see \cite{S2}. By \cite{SX},  $\mu(x,y)\to\infty$  with larger (type $A$) root systems.
In particular, the constant $D(\Phi)$ must depend on $\Phi$ and tend to infinity as $\Phi$ gets large.
Conceivably, in the spirit of Guralnick's conjecture, one might replace $D(\Phi)$ by a universal constant
if $\lambda$ is fixed as $\lambda=0$ and $\mu$ is allowed to be arbitrary.
\end{rem}

 \begin{cor}\label{weakerbound}For a fixed $n$, there is a constant $C(\Phi,n)$ such that,  for all $\lambda,\nu\in X^+,$
 $$\dim\Ext^n_{U_\zeta}(L_\zeta(\lambda),L_\zeta(\nu))\leq C(n,\Phi)$$
 for any quantum group $U_{\zeta}$ of type $\Phi$ ($l$ arbitrary).\end{cor}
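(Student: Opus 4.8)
The plan is to deduce Corollary \ref{weakerbound} immediately from Theorem \ref{bounding sum of exts}; no new argument is required. Fix the nonnegative integer $n$ and let $C'(\Phi,n)$ be the constant furnished by Theorem \ref{bounding sum of exts}, so that $\sum_\nu\dim\Ext^n_{U_\zeta}(L_\zeta(\lambda),L_\zeta(\nu))\leq C'(\Phi,n)$ for every $\lambda\in X^+$ and every quantum group $U_\zeta$ of type $\Phi$ (with $l$ arbitrary, subject to the standing conditions on $l$). Each summand $\dim\Ext^n_{U_\zeta}(L_\zeta(\lambda),L_\zeta(\nu))$ is a nonnegative integer, hence is bounded above by the entire sum. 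Therefore the choice $C(n,\Phi):=C'(\Phi,n)$ gives $\dim\Ext^n_{U_\zeta}(L_\zeta(\lambda),L_\zeta(\nu))\leq C(n,\Phi)$ for all $\lambda,\nu\in X^+$, which is exactly the assertion.

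The genuine content therefore sits entirely in Theorem \ref{bounding sum of exts}, and through it in Lemma \ref{biglengthlemma}: the bound $\text{length}(Q_\zeta(\lambda))\leq C(\Phi)$, uniform in $\lambda$ and in $l$, is what drives everything. Granting such a bound, a minimal projective resolution $P_\bullet\twoheadrightarrow L_\zeta(\lambda)$ has the property that each $P_{i+1}$ has at most $\text{length}(P_i)$ indecomposable summands, so $P_n$ has at most $C(\Phi)^n$ of them, whence $\sum_\nu\dim\Ext^n_{U_\zeta}(L_\zeta(\lambda),L_\zeta(\nu))\leq C(\Phi)^n=:C'(\Phi,n)$.

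Accordingly, the only place a real obstacle appears---and it is already overcome---is the proof of Lemma \ref{biglengthlemma}, which combines Brauer--Humphreys reciprocity, the fact that only finitely many $x,y\in W_a$ (independently of $l$) enter the relevant inverse Kazhdan--Lusztig multiplicities, the factorization $Q_\zeta(\lambda)=Q_\zeta(\lambda_0)\otimes L_{\mathbb C}(\lambda^\dagger)^{(1)}$, Lemma \ref{boundinglemma} on lengths of tensor products of $U({\mathfrak g}_{\mathbb C})$-modules, and the behavior of translation functors between $l$-regular and $l$-singular blocks. From the standpoint of Corollary \ref{weakerbound} itself, however, nothing remains to be done once Theorem \ref{bounding sum of exts} is in hand.
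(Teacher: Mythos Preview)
Your proposal is correct and matches the paper's approach exactly: the paper simply states that the corollary is ``an immediate consequence'' of Theorem \ref{bounding sum of exts}, and your deduction (each nonnegative summand is bounded by the full sum, so take $C(n,\Phi)=C'(\Phi,n)$) is precisely that immediate consequence. Your additional recap of the proof of Theorem \ref{bounding sum of exts} and Lemma \ref{biglengthlemma} is accurate but not needed for the corollary itself.
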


\begin{cor}\label{doubleprimecor} There is a constant $C^{\prime\prime}(\Phi,n)$ for any $n$ such that, for any $\lambda\in X^+$,
 $$\sum_\nu\dim\Ext^n_{U_\zeta}(L_\zeta(\lambda),\nabla_\zeta(\nu))\leq C^{\prime\prime}(\Phi,n)$$
  for any quantum group $U_{\zeta}$ of type $\Phi$ ($l$ arbitrary).\end{cor}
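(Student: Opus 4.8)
The plan is to deduce this directly from Theorem \ref{bounding sum of exts} by expressing $\Ext^n_{U_\zeta}(L_\zeta(\lambda),\nabla_\zeta(\nu))$ in terms of $\Ext$-groups between irreducibles. First I would use the standard short exact sequence $0\to L_\zeta(\nu)\to\nabla_\zeta(\nu)\to\nabla_\zeta(\nu)/L_\zeta(\nu)\to 0$ and induct on the composition length of $\nabla_\zeta(\nu)$, which by the Corollary to Lemma \ref{biglengthlemma} is at most $C(\Phi)$, uniformly in $\nu$ and $l$. The long exact sequence in $\Ext^\bullet_{U_\zeta}(L_\zeta(\lambda),-)$ then gives
$$\dim\Ext^n_{U_\zeta}(L_\zeta(\lambda),\nabla_\zeta(\nu))\leq\sum_{\tau}[\nabla_\zeta(\nu):L_\zeta(\tau)]\dim\Ext^n_{U_\zeta}(L_\zeta(\lambda),L_\zeta(\tau)),$$
the sum running over all dominant weights $\tau$ (with the composition multiplicities as coefficients).

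Next I would sum over all $\nu\in X^+$. The key observation is that each composition factor $L_\zeta(\tau)$ appears in $\nabla_\zeta(\nu)$ only when $\tau\leq\nu$ and, more usefully, that for fixed $\tau$ there are only finitely many $\nu$ with $[\nabla_\zeta(\nu):L_\zeta(\tau)]\neq 0$ — indeed, by Brauer--Humphreys reciprocity (used already in the proof of Lemma \ref{biglengthlemma}) this multiplicity equals the multiplicity of $\Delta_\zeta(\nu)$ in a $\Delta_\zeta$-filtration of $Q_\zeta(\tau)$, so the number of such $\nu$ (counted with multiplicity) is at most $\text{length}(Q_\zeta(\tau))\leq C(\Phi)$. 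Interchanging the order of summation,
$$\sum_\nu\dim\Ext^n_{U_\zeta}(L_\zeta(\lambda),\nabla_\zeta(\nu))\leq\sum_{\tau}\Big(\sum_{\nu}[\nabla_\zeta(\nu):L_\zeta(\tau)]\Big)\dim\Ext^n_{U_\zeta}(L_\zeta(\lambda),L_\zeta(\tau))\leq C(\Phi)\sum_{\tau}\dim\Ext^n_{U_\zeta}(L_\zeta(\lambda),L_\zeta(\tau)).$$
The inner sum is $\leq C'(\Phi,n)$ by Theorem \ref{bounding sum of exts}, so $C''(\Phi,n):=C(\Phi)\,C'(\Phi,n)$ works.

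The only point requiring care — and the step I would flag as the main obstacle — is justifying that for each fixed $\tau$ only finitely many $\nu$ contribute, and that the total contribution $\sum_\nu[\nabla_\zeta(\nu):L_\zeta(\tau)]$ is bounded by a constant depending only on $\Phi$ (not on $l$ or $\lambda$). This is exactly where Brauer--Humphreys reciprocity and Lemma \ref{biglengthlemma} enter, and one must be mindful that $\tau$ need not be $l$-regular; in the singular case one passes through translation functors as in the last paragraph of the proof of Lemma \ref{biglengthlemma}, which only decreases lengths. Everything else is a routine dimension shift via the long exact $\Ext$-sequence together with the finiteness of $\text{length}(\nabla_\zeta(\nu))$.
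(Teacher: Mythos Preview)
Your argument is correct, but the paper proceeds differently and more directly. The paper first reduces to $l$-regular weights via adjointness of translation functors, and then invokes Remark \ref{quantumcoho}: the Kazhdan--Lusztig type identity there expresses $\dim\Ext^n_{U_\zeta}(L_\zeta(\lambda),L_\zeta(\nu))$ as a non-negative sum of products of dimensions of $\Ext$-groups into $\nabla_\zeta$'s and out of $\Delta_\zeta$'s, one term of which is $\dim\Ext^n_{U_\zeta}(L_\zeta(\lambda),\nabla_\zeta(\nu))\cdot\dim\Hom_{U_\zeta}(\Delta_\zeta(\nu),L_\zeta(\nu))=\dim\Ext^n_{U_\zeta}(L_\zeta(\lambda),\nabla_\zeta(\nu))$. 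Hence $\dim\Ext^n_{U_\zeta}(L_\zeta(\lambda),\nabla_\zeta(\nu))\leq\dim\Ext^n_{U_\zeta}(L_\zeta(\lambda),L_\zeta(\nu))$ term by term, and summing over $\nu$ gives the bound $C'(\Phi,n)$ straight from Theorem \ref{bounding sum of exts}, with no extra factor.

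Your route is more elementary in that it avoids the abstract Kazhdan--Lusztig formula of Remark \ref{quantumcoho} altogether, relying only on the long exact sequence, Brauer--Humphreys reciprocity, and Lemma \ref{biglengthlemma}; it also handles singular weights uniformly without a separate translation argument. The price is a weaker constant $C(\Phi)\,C'(\Phi,n)$ in place of $C'(\Phi,n)$. Both approaches ultimately rest on Lemma \ref{biglengthlemma} (the PIM length bound), but the paper's use of it is hidden inside the proof of Theorem \ref{bounding sum of exts}, whereas you invoke it a second time through reciprocity.
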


  \begin{proof} By adjoint associativity of translation functors, it suffices to consider only $l$-regular weights. By Remark \ref{quantumcoho},
  $\dim\Ext^n_{U_\zeta}(L_\zeta(\lambda),\nabla_\zeta(\nu))\leq\dim\Ext^n_{U_\zeta}(L_\zeta(\lambda),
  L_\zeta(\nu))$. Now apply Theorem \ref{bounding sum of exts}. \end{proof}

Using this corollary and Remark \ref{quantumcoho} again, we get
\begin{thm}\label{KL sum bounds} Let $m\in\mathbb N$. Let $C^{\prime\prime}(\Phi,m)$ be as in Corollary \ref{doubleprimecor}. If $y\in W_a^+$, then
$$\sum_{x\leq y, x\in W_a^+} c_{x,y}^{[\ell(y)-\ell(x)-m]}\leq C^{\prime\prime}(\Phi,m).$$
\end{thm}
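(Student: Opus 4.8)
The plan is to deduce Theorem \ref{KL sum bounds} directly from Corollary \ref{doubleprimecor} by way of the dictionary between Kazhdan--Lusztig coefficients and quantum $\Ext$-groups recorded in Remark \ref{quantumcoho}. First I would fix $y\in W_a^+$ and, using the isomorphism $\epsilon_l\colon W_a\overset\sim\to W_{a,l}$ (valid for any admissible $l>h$, odd and prime to $3$ in type $G_2$), transfer the whole computation to $W_{a,l}^+$, so that the $P_{x,y}$ become Kazhdan--Lusztig polynomials for the Coxeter system $(W_{a,l},S_{a,l})$. Then choose a regular antidominant weight, e.g.\ $-2\rho\in C_l^-$ when $l>h$, and set $\lambda:=y\cdot_l(-2\rho)$, $\nu:=x\cdot_l(-2\rho)$ for each $x\le y$ in $W_{a,l}^+$; these are $l$-regular dominant weights, all $W_{a,l}$-linked.

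The key identity is the one from Remark \ref{quantumcoho}: writing $p_{x,y}=\sum_{n\ge0}\dim\Ext^n_{U_\zeta}(L_\zeta(y\cdot\lambda^-),\nabla_\zeta(x\cdot\lambda^-))\,t^n$, one has $P_{x,y}=t^{\ell(y)-\ell(x)}\bar p_{x,y}$ in $q=t^2$. Reading off the coefficient of $t^{\ell(y)-\ell(x)-m}$ in $P_{x,y}$ therefore gives
$$
c_{x,y}^{[\ell(y)-\ell(x)-m]}=\dim\Ext^m_{U_\zeta}(L_\zeta(y\cdot\lambda^-),\nabla_\zeta(x\cdot\lambda^-)),
$$
for $\lambda^-=-2\rho$. (I would double-check the bookkeeping: the passage from $p_{x,y}$ to $\bar p_{x,y}$ replaces $t^i$ by $t^{-i}$, and the prefactor $t^{\ell(y)-\ell(x)}$ then turns the $t^m$-coefficient of $p_{x,y}$ into the $t^{\ell(y)-\ell(x)-m}$-coefficient of $P_{x,y}$; this is exactly the normalization already used in the proof of Theorem \ref{bounding KL coefficients}.) Summing over all $x\in W_{a,l}^+$ with $x\le y$, the left-hand side of the asserted inequality becomes
$$
\sum_{x\le y}\dim\Ext^m_{U_\zeta}(L_\zeta(\lambda),\nabla_\zeta(x\cdot(-2\rho)))\ \le\ \sum_{\mu\in X^+}\dim\Ext^m_{U_\zeta}(L_\zeta(\lambda),\nabla_\zeta(\mu)),
$$
since the weights $x\cdot(-2\rho)$ for distinct $x\in W_{a,l}^+$ are distinct dominant weights. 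By Corollary \ref{doubleprimecor} this last sum is $\le C^{\prime\prime}(\Phi,m)$, which is precisely the claimed bound. Because $C^{\prime\prime}(\Phi,m)$ is independent of $l$, the conclusion holds for the Kazhdan--Lusztig polynomials of $W_a$ itself (via $\epsilon_l$), as stated.

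I do not expect a serious obstacle here: this is a clean translation argument, and all the substantive work has been done in Lemma \ref{biglengthlemma}, Theorem \ref{bounding sum of exts}, and Corollary \ref{doubleprimecor}. The only point needing care is the combinatorial normalization in Remark \ref{quantumcoho}—making sure the degree shift $\ell(y)-\ell(x)$ and the $t\mapsto t^{-1}$ reversal are applied in the right order so that the $t^{\ell(y)-\ell(x)-m}$-coefficient of $P_{x,y}$ really equals $\dim\Ext^m$, not $\dim\Ext^{m'}$ for some reindexed $m'$. One should also note that Remark \ref{quantumcoho} is stated for $l$-regular weights, which is why restricting to $-2\rho$ (regular, since $l>h$) and to regular $x\cdot(-2\rho)$ suffices; no singular case arises. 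With that checked, the proof is a two-line deduction.
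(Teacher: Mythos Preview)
Your proposal is correct and follows essentially the same approach as the paper: the paper's proof is a one-line invocation of Corollary \ref{doubleprimecor} and Remark \ref{quantumcoho}, and you have accurately unpacked that invocation, including the transfer via $\epsilon_l$ to $W_{a,l}$, the choice of the regular weight $-2\rho$, and the bookkeeping that identifies $c_{x,y}^{[\ell(y)-\ell(x)-m]}$ with $\dim\Ext^m_{U_\zeta}(L_\zeta(y\cdot(-2\rho)),\nabla_\zeta(x\cdot(-2\rho)))$.
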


 \section{Higher $\Ext^n$ for algebraic groups} We first prove a higher degree version of Theorem
 \ref{MainExtOne theorem}. For $\nu\in X^+$, let $e_p(\nu)$ denote   the exponent of the largest power of $p$
  appearing in
 the $p$-adic expansion $\nu$. Equivalently, $e_p(\nu)$ is the smallest nonnegative integer $e$ such
 that $\nu\in X^+_{e+1,p}$, i.~e., if $\nu=\sum a_i\varpi_i$, then each $a_i<p^{e+1}$.

 \begin{thm}\label{SecondBigTheorem} Let $m,e$ be nonnegative integers. There exists a constant $c(\Phi,m,e)$ with the following property. If $G$ is a semisimple, simply connected
algebraic group with root system $\Phi$ over an algebraically closed field $k$ of characteristic $p$, then,
for $\lambda,\nu\in X^+$ with $e_p(\lambda)\leq e$, $$\dim\Ext^m_G(L(\lambda),L(\nu))=\dim\Ext^m_G(L(\nu),L(\lambda))\leq c(\Phi,m,e).$$
 In particular,
 $\dim \opH^m(G,L(\nu))\leq c(\Phi,m,0),\quad\forall\nu\in X^+.$
\end{thm}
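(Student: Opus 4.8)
The proof will be by induction on $m$, leveraging the results already available for $\Ext^1$ (Theorem~\ref{MainExtOne theorem}) and the quantum-group machinery of \S6. The key structural idea is to first establish the result when $p$ is large enough that the Lusztig character formula (LCF) holds on the relevant ideal, then deal with the remaining finitely many small primes separately (using the appendix, \S8, for the construction of a suitable substitute for the $G_e$-PIMs). For fixed $p$ with the LCF available, one reduces modular $\Ext$-computations to quantum ones via the ``reduction mod $p$'' modules $\rDelta,\rnabla$ and the identities of Lemma~\ref{ExtforG}, together with the stability estimates of \cite{CPSK} for generic cohomology.

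\textbf{Step 1 (base case and reduction to $\opH^m$).} The case $m=0$ is trivial, and $m=1$ is Theorem~\ref{MainExtOne theorem} (with $e=0$; for general $e$ one uses the tensor-product/Steinberg factorization and iterates as in the proof of Proposition~\ref{shifting}(a), reducing $e_p(\lambda)$ by one at each stage at the cost of enlarging the bound by a factor depending only on $\Phi$). For the special claim about $\opH^m(G,L(\nu))$, note $\opH^m(G,L(\nu))=\Ext^m_G(L(0),L(\nu))=\Ext^m_G(k,L(\nu))$, so $e_p(\lambda)=e_p(0)=0\le e$ for any $e\ge 0$; thus it is literally the case $\lambda=0$, $e=0$ of the main statement, and $c(\Phi,m,0)$ is exactly the constant we produce.

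\textbf{Step 2 (large $p$: the inductive step via $G_r$-PIMs).} Assume $p\ge 2h-2$ so that the $G_r$-PIMs $Q_r(\mu)$ carry compatible $G$-module structures (Jantzen/Ballard), and $p$ large enough that the LCF holds on an ideal containing the $p$-regular $p$-restricted weights. Choose $r$ with $\lambda\in X^+_{r,p}$ (possible since $e_p(\lambda)\le e$, so $r=e$ works). The essential new ingredient is Lemma~\ref{firstlemmatoSecondMainThm}: there is a bound $C^\flat(\Phi,r)$ on the $G$-composition length of $Q_r(\mu)$ depending only on $\Phi,r$. Granting this, one runs the standard dimension-shifting argument: $\Ext^m_G(L(\lambda),L(\nu))$ is controlled by $\Ext^{m-1}_G$ between $L(\nu)$ and the $G$-composition factors of $\Omega^1 L(\lambda)=\rad Q_r(\lambda)/(\text{top})$ (or more precisely, using the $G_r$-projective resolution of $L(\lambda)$ and the Hochschild--Serre spectral sequence for $1\to G_r\to G\to G/G_r\to 1$, together with $G/G_r\cong G$ via $F^r$). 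The twisting by $F^r$ is exactly where the generic-cohomology stability estimates of \cite{CPSK} enter: they let us replace the twist $(\,\cdot\,)^{(r)}$ by a degree shift that is eventually independent of $r$, so the recursion closes with constants depending only on $\Phi,m,e$. Each composition factor contributes at most $c(\Phi,m-1,e')$ for a controlled $e'$, and there are at most $C^\flat(\Phi,r)$ of them; summing gives $c(\Phi,m,e)$.

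\textbf{Step 3 (small primes).} For the finitely many primes $p<2h-2$, the $G_r$-PIMs need not have $G$-structure, so one instead uses the substitute modules constructed in Corollary~\ref{lastappendixcorollary} of the Appendix, which rely on the stable form of Donkin's tilting conjecture (Corollary~\ref{directsumcorollary}); these have a $\Delta^p$- (and $\nabla_p$-) type filtration with sections whose multiplicities and highest weights are controlled in terms of $\Phi$ and $r$ only, which is all the recursion needs. The argument of Step~2 then goes through verbatim with these substitutes in place of $Q_r$. \textbf{The main obstacle} I anticipate is precisely the bookkeeping in Step~2: making the generic-cohomology stabilization uniform so that the twist exponent $r$ does not leak into the final constant, and simultaneously controlling how $e_p$ of the weights appearing in the resolution grows — one must verify that the ``new'' first argument at each stage of the recursion still has bounded $p$-exponent (bounded in terms of the original $e$, $m$, and $\Phi$), so that the inductive hypothesis applies with a legitimate smaller-degree constant. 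The length bound $C^\flat(\Phi,r)$ (Lemma~\ref{firstlemmatoSecondMainThm}) itself will require a separate argument paralleling Lemma~\ref{biglengthlemma}, but using Brauer--Humphreys reciprocity and Kazhdan--Lusztig multiplicity bounds in the algebraic-group (rather than quantum) setting, valid under the LCF.
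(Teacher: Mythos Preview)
Your ingredients are largely the right ones (Lemma~\ref{firstlemmatoSecondMainThm}, Lemma~\ref{secondLemmatoSecondMain}, Corollary~\ref{lastappendixcorollary}), but the inductive architecture has a genuine gap, and the role of generic cohomology is misplaced.

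The gap is in your ``standard dimension-shifting'' step. The module $Q_{e+1}(\lambda)$ is $G_{e+1}$-projective but not $G$-projective, so from $0\to R\to Q_{e+1}(\lambda)\to L(\lambda)\to 0$ the long exact sequence only gives
\[
\Ext^{m-1}_G(R,L(\nu))\longrightarrow\Ext^m_G(L(\lambda),L(\nu))\longrightarrow\Ext^m_G(Q_{e+1}(\lambda),L(\nu)),
\]
and the last term is \emph{not} zero in general. By Hochschild--Serre it vanishes precisely when $\Hom_{G_{e+1}}(L(\lambda),L(\nu))=0$, i.e., when the first $e{+}1$ $p$-adic digits of $\nu$ do not all coincide with those of $\lambda$. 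When they do coincide, $\nu=\lambda+p^{e+1}\nu'$ and one computes $\Ext^m_G(Q_{e+1}(\lambda),L(\nu))\cong\Ext^m_G(L(0),L(\nu'))$, a term in the \emph{same} degree $m$ with $\nu'$ unrestricted. So naive dimension shifting loops, and your alternative suggestion of running the full spectral sequence for $G_r\trianglelefteq G$ would face the same issue in the $E_2^{m,0}$ term.

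The paper breaks this loop by treating $\lambda=0$ \emph{first} as a separate case, and this is where generic cohomology actually enters. For $\nu\ne 0$, Lemma~\ref{secondLemmatoSecondMain} lets one assume the first nonzero $p$-adic digit of $\nu$ occurs at position $r\le f(\Phi,m)$ (replacing $L(\nu)=L(\nu'')^{(s)}$ by $L(\nu'')^{(f(\Phi,m))}$ if $s$ is too large). Then $\Hom_{G_{r+1}}(L(0),L(\nu))=0$ is \emph{forced}, so $Q_{r+1}(0)$ really is $G$-acyclic on $L(\nu)$, and the dimension shift goes through cleanly with $r+1\le f(\Phi,m)+1$ controlling both the length and the $e_p$-values of the composition factors of $R$ via Lemma~\ref{firstlemmatoSecondMainThm}. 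For general $\lambda$ with $e_p(\lambda)\le e$ one then splits on $\Hom_{G_{e+1}}$: the vanishing case dimension-shifts using $Q_{e+1}(\lambda)$, while the nonvanishing case reduces to the already-established $\lambda=0$ bound. Thus generic cohomology is applied to $\nu$ (to cap the position of its first nonzero digit), not, as you suggest, to absorb Frobenius twists coming from $G/G_r\cong G$ inside a resolution.

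Two smaller corrections. First, the modules $\rDelta,\rnabla$ and Lemma~\ref{ExtforG} play no role in this proof; they were used for Theorem~\ref{MainExtOne theorem} only. Second, the proof of Lemma~\ref{firstlemmatoSecondMainThm} imports the quantum length bound $C(\Phi)$ of Lemma~\ref{biglengthlemma} via reduction mod~$p$ under the LCF, rather than running a separate Brauer--Humphreys argument directly for $G$ as you propose.
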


We can assume to start that $G$ is simple, i.~e., it is as in \S2(1). The proof requires two lemmas.
Given $e\geq 1$ and $\tau\in X^+_{s,p}$, let $Q_e(\tau)$ be the projective cover of the irreducible
$G_e$-module $L(\tau)|_{G_e}$. It is known that $Q_e(\tau)$ is the injective hull of $L(\tau)|_{G_s}$.
When $s=1$, so that $\tau\in X^+_{1,p}$, it will be sometimes convenient to denote $Q_1(\tau)$ by $Q_1(\tau)$.
When $p\geq 2h-2$, each $Q_e(\tau)$, $e\geq 1, \tau\in X^+_{e,p}$,  has a compatible structure as a rational $G$-module \cite[\S 11.11]{Jan}. In that case,
writing $\tau=\tau_0+p\tau_1+\cdots+p^{s-1}\tau_{e-1}$ as per (\ref{padicexpansion}),
 $Q_e(\tau)\cong Q_1(\tau_0)\otimes Q_1(\tau_1)^{(1)}\otimes\cdots\otimes Q_1(\tau_{e-1})^{(e-1)}$ as rational $G$-modules.
In addition, $Q_e(\tau)$ has highest weight $2(p^e-1)\rho+w_0\tau)$. (This later statement is true for all $p$, if
$Q_e(\nu)$ is regarded as a $G_eT$-module.)

\begin{lem}\label{firstlemmatoSecondMainThm}
Let $f$ be a positive integer. There exists a constant $C^\flat(\Phi,f)$ satisfying
the following condition. Let $G$ be a simple, simply connected algebraic group, having root system $\Phi$,  over an algebraically closed field $k$ of
characteristic $p\geq 2h-2$. If $\nu\in X^+_{f,p}$, then
$\text{\rm length}(Q_f(\nu))\leq C^\flat(\Phi,f)$.  In addition, if $L(\omega)$ is a composition factor of $Q_f(\nu)$,
then $e_p(\omega)\in X^+_{f+1,p}$.
\end{lem}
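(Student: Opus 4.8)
\textbf{Proof proposal for Lemma \ref{firstlemmatoSecondMainThm}.}

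The plan is to mimic, in the algebraic-group setting, the strategy already carried out for quantum groups in Lemma \ref{biglengthlemma}, using the tensor-product decomposition of $Q_f(\nu)$ available when $p\geq 2h-2$. First I would reduce to the case $f=1$. Writing $\nu=\nu_0+p\nu_1+\cdots+p^{f-1}\nu_{f-1}$ with each $\nu_i\in X^+_{1,p}$, we have the rational $G$-module isomorphism $Q_f(\nu)\cong Q_1(\nu_0)\otimes Q_1(\nu_1)^{(1)}\otimes\cdots\otimes Q_1(\nu_{f-1})^{(f-1)}$ from \cite[\S11.11]{Jan}. So if I can bound $\text{length}_G(Q_1(\tau))$ for $\tau\in X^+_{1,p}$ by a constant $C^\flat(\Phi,1)$ depending only on $\Phi$, together with some control on the highest-weight exponents of composition factors, then the general bound follows by a submultiplicativity argument on $G$-length across an $f$-fold tensor product, with $C^\flat(\Phi,f)$ roughly $(C^\flat(\Phi,1))^f$ times a factor accounting for the $G$-length of tensor products of the twisted composition factors.

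For the $f=1$ case, the key point is that $Q_1(\tau)$ is a $G_1$-projective module which, as a $G$-module, has highest weight $2(p-1)\rho+w_0\tau$ (true for all $p$ at the $G_1T$ level, and compatibly at the $G$ level for $p\geq 2h-2$). I would first bound its $G_1$-length: $Q_1(\tau)$ has a filtration by modules of the form $\widehat{Z}_1(\sigma)$ (or one can work with $G_1T$-PIMs and Brauer--Humphreys-type reciprocity), and the multiplicities are controlled by decomposition numbers $[\widehat{Z}_1(\sigma):L(\tau)]$, which in turn are governed by Kazhdan--Lusztig-type data for the affine Weyl group; since the weights $\sigma$ occurring satisfy $\sigma\leq 2(p-1)\rho$, the number of relevant $W_a$-orbit data is bounded independently of $p$ (exactly as in the argument for Lemma \ref{biglengthlemma}, using (\ref{maximalorder}), Lemma \ref{Ebound}, and finiteness of the relevant Coxeter-group data). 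Next, I would pass from $G_1$-length to $G$-length: a composition factor of $Q_1(\tau)$ as a $G$-module has the form $L(\omega_0)\otimes L(\omega^\dagger)^{(1)}$; since the highest weight of $Q_1(\tau)$ is $2(p-1)\rho+w_0\tau\leq 2(p-1)\rho$, any weight $\omega$ of $Q_1(\tau)$ satisfies $\omega\leq 2(p-1)\rho$, forcing $\omega^\dagger\leq'\rho$ (up to a bounded error), so $\dim L_{\mathbb C}(\omega^\dagger)$ is bounded by a constant depending only on $\Phi$. Applying Lemma \ref{boundinglemma} to the tensor products of these bounded-dimensional twisted pieces as the $G_1$-composition factors are reassembled into $G$-composition factors then yields the bound on $\text{length}_G(Q_1(\tau))$, and shows $e_p(\omega)\leq 1$, i.e. $\omega\in X^+_{2,p}$. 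Feeding this back through the $f$-fold tensor decomposition, each factor contributes composition factors with $p$-adic exponent $\leq 1$ in its own Frobenius twist, so the total exponent of a composition factor of $Q_f(\nu)$ is $\leq f$, giving $\omega\in X^+_{f+1,p}$.

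The main obstacle I anticipate is making the ``bounded number of relevant affine-Weyl-group data independently of $p$'' step genuinely rigorous without assuming the Lusztig character formula: one needs that the $G_1T$-decomposition multiplicities $[\widehat{Z}_1(\sigma):L(\tau)]$ for $\sigma,\tau$ with both $\leq 2(p-1)\rho$ are bounded by a constant depending only on $\Phi$, which should follow from the fact that these multiplicities are computed via a fixed finite set of inverse Kazhdan--Lusztig polynomial coefficients for $W_a$ (after rescaling by $\varepsilon_p$), the set being independent of $p$ once one tracks which $W_a$-elements can possibly enter given the highest-weight constraint; the $p\geq 2h-2$ hypothesis is exactly what guarantees the PIMs lift to $G$ and that the highest-weight/tensor-factor bookkeeping goes through cleanly. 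A secondary technical point is the precise submultiplicativity of $G$-length under tensor products of modules whose twisted layers are controlled by Lemma \ref{boundinglemma} — this is routine but must be stated carefully so that the dependence of $C^\flat(\Phi,f)$ is genuinely only on $\Phi$ and $f$.
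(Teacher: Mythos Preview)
Your overall architecture matches the paper's---reduce to $f=1$ via the tensor factorization $Q_f(\nu)\cong Q_1(\nu_0)\otimes\cdots\otimes Q_1(\nu_{f-1})^{(f-1)}$, then induct---but there is a genuine gap at the base step, and it is precisely the obstacle you flagged. You want the $G_1T$-decomposition numbers $[\widehat Z_1(\sigma):L(\tau)]$ to be bounded independently of $p$ by ``a fixed finite set of inverse Kazhdan--Lusztig polynomial coefficients.'' But that identification \emph{is} the Lusztig character formula; it is not known for all $p\geq 2h-2$, and the hypothesis $p\geq 2h-2$ by itself does not give it. The paper resolves this not by proving such a bound directly, but by a two-prime split: for the finitely many primes with $p<3h-3$ or where the LCF might fail, there are only finitely many $\nu\in X^+_{f,p}$, so any bound works; for $p\geq 3h-3$ with the LCF in force (which holds for all sufficiently large $p$ depending only on $\Phi$ by \cite{AJS}, \cite{Fiebig}), one lifts $Q_1(\tau)$ to the quantum PIM $Q_\zeta(\tau)$ and quotes Lemma~\ref{biglengthlemma}. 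The LCF then guarantees that each composition factor of $Q_\zeta(\tau)$ reduces mod~$p$ to an \emph{irreducible} $G$-module, so $\text{length}_G(Q_1(\tau))=\text{length}(Q_\zeta(\tau))\leq C(\Phi)$ on the nose.

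A second, smaller gap concerns the inductive step. You propose a ``submultiplicativity'' of $G$-length across the tensor factors, invoking Lemma~\ref{boundinglemma}. But $G$-length in characteristic $p$ is not submultiplicative---the $SL_2$ example immediately following Lemma~\ref{boundinglemma} shows tensor products of irreducibles can have unbounded length. The paper avoids this by passing through $\Delta$-filtrations: it first proves an auxiliary claim that $\text{length}(\Delta(\nu))\leq C^\diamond(\Phi,e)$ whenever $e_p(\nu)\leq e$, then writes $Q_f(\nu)=Q_1(\nu_0)\otimes Q_{f-1}(\nu^\dagger)^{(1)}$, uses the $\Delta$-filtration of $Q_{f-1}(\nu^\dagger)$, and exploits that each composition factor of $Q_1(\nu_0)$ has the form $L(\sigma_0)\otimes\Delta(\sigma^\dagger)^{(1)}$ with $\dim\Delta(\sigma^\dagger)$ bounded (since $\sigma^\dagger\leq'\rho$). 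Then $\Delta(\sigma^\dagger)\otimes\Delta(\tau)$ has a $\Delta$-filtration with at most $\dim\Delta(\sigma^\dagger)$ sections---here Lemma~\ref{boundinglemma} legitimately applies, since $\Delta$-filtration multiplicities are characteristic-free---and the auxiliary claim bounds the length of each section. Your sketch gestures at Lemma~\ref{boundinglemma} but does not supply this $\Delta$-filtration scaffolding, without which the bound does not go through.
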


\begin{proof} We prove the last assertion first.  Let $L(\omega)$ be a composition factor of $Q_f(\nu)$, so that
$\omega\leq 2(p^f-1)\rho+w_0\nu)$. For any simple root $\alpha$, $(\omega,\alpha^\vee)\leq (\omega,\alpha_0^\vee)
\leq (2(p^f-1)\rho,\alpha_0^\vee)=2(p^f-1)(h-1)\leq (p^f-1)p<p^{f+1}$. Therefore, $\omega\in X^+_{f+1,p}$.

For a given prime $p$, there are only finitely many $\nu\in X^+$ satisfying $e_p(\nu)<f$, and hence only finitely
many modules $Q_f(\nu)$, which collectively have a bounded length. Therefore, we need to find a uniform bound for the
$Q_f(\nu)$ under the assumptions that $p\geq 3h-3$ and that
the LCF holds in the Jantzen
region $\Jan$. These assumptions
will remain
in effect
for the remainder
of the proof. (In that case, we will give an explicit formula for $C^\flat(\Phi,f)$ in terms of the bound
 $C(\Phi)$ of Lemma \ref{biglengthlemma}.) Let $\zeta=\sqrt[p]{1}$. For $\tau\in X^+_{1,p}$,
$Q_1(\tau)$ is obtained by ``reduction mod $p$" from the $U_\zeta$-projective indecomposable module $Q_\zeta(\nu)$.\footnote{Here
 $Q_1(\tau)$ is the projective $G$-module in the full subcategory $\sC'$ of $\Gmod$ with objects having composition factors $L(\nu)$
with $\nu\leq 2(p-1)\rho+w_0\tau$. Thus, it is the reduction mod $p$ of some $U_\zeta$-module $Q'_\zeta(\lambda_0)$ by \cite[\S3]{DS}, itself projective
in an analogous category. However, it is easy to argue, from the validity of the LCF, together with the assumption that
 $p\geq 3h-3$ that $\dim Q_1(\tau)=\dim Q_\zeta(\tau)$ for each $\tau\in X_{1,p}^+$. Also, $Q_\zeta(\lambda_0)$ is
  also projective in the quantum version $\sC_\zeta$ of $\sC$. We conclude $Q_\zeta(\lambda_0)=Q'_\zeta(\lambda_0)$.} By
Lemma \ref{biglengthlemma}, $Q_\zeta(\tau)$ has length at most $C(\Phi)$. The validity of the LCF implies that each composition factor of $Q_\zeta(\tau)$ reduces mod $p$
to an irreducible $G$-module. Therefore, ${\text{length}}(Q_\zeta(\tau))=\text{length}(Q_1(\tau))$.

We claim that, given a non-negative integer $e$, there is a positive integer $C^\diamond(\Phi,e)$ such that if
$e_p(\nu)\leq e$, then $\text{length}(\Delta(\nu))\leq C^\diamond(\Phi,e)$. If $e=0$, then $e_p(\nu)\leq e$ means that $\nu\in X_{1,p}$.
Then $\text{length}(\Delta(\nu))\leq{\text{length}}(Q_1(\nu))\leq C(\Phi)$ by the above paragraph. We prove the claim by induction on $e>0$. Assume that $e_p(\nu)= e$. Since $\Delta(\nu)$ can be realized by reduction mod $p$ from
$\Delta_\zeta(\nu)$, $\Delta(\nu)$ has a filtration with at most $\text{length}(\Delta_\zeta(\nu))\leq C(\Phi)$ sections
each having character $\chi_{KL}(\tau)=\ch(L(\tau_0)\otimes\Delta(\tau^\dagger)^{(1)})$ for $\tau,\tau^\dagger\in X^+,\tau_0\in X^+_{1,p}$. Again, $\ch\Delta(\tau^\dagger)$ is a sum of at most $C(\Phi)$ characters $\chi_{KL}(\sigma)=\ch(L(\sigma_0)\otimes\Delta(\sigma^\dagger)^{(1)}$, for $\sigma,\sigma^\dagger\in X^+$ and $\sigma_0\in X^+_{1,p}$.
It follows that $\tau_0 +p\sigma_0+p^2\sigma^\dagger$ is a highest weight of a composition factor of $\Delta(\nu)$  and of $Q_{e+1}(\nu)$, and so has $e_p$-value at most $e+1$. Hence $e_p(\sigma^\dagger)\leq e-1$.
 By induction, $\text{length}(\Delta(\sigma^\dagger))\leq C^\diamond(\Phi,e-1)$. Applying the Steinberg tensor product theorem,
it follows that $\text{length}(\Delta(\nu))\leq C(\Phi)^2C^\diamond(\Phi,e-1)$. The claim follows with $C^\diamond(\Phi,e)
=C(\Phi)^{2e+1}$ for all $e\geq 0$.

We now prove the lemma by (a new) induction on $f\geq 1$. If $f=1$,  $C^\flat(\Phi,f)=C(\Phi)$ works as already remarked.  So fix
$f>1$ and write $Q_f(\nu)=Q_1(\nu_0)\otimes Q_{f-1}(\nu^\dagger)^{(1)}$. By induction, $Q_{f-1}(\nu^\dagger)$ has length
bounded by $C^\flat(\Phi,f-1)$, and hence has a $\Delta$-filtration with at most $C^\flat(\Phi,f-1)$-terms $\Delta(\tau)$
with $e_p(\tau)\leq f-1$. But $Q_1(\nu_0)$ has length at most $C(\Phi)$ with composition factors $L(\sigma_0)\otimes L(\sigma^\dagger)^{(1)}
\cong L(\sigma_0)\otimes\Delta(\sigma^\dagger)^{(1)}$, $\sigma_0\in X^+_{1,p},\sigma^\dagger\in X^+$. For $\alpha\in\Pi$, $(\sigma^\dagger,\alpha^\vee)\leq (\sigma^\dagger,\alpha^\vee_0)\leq
2h-2$, so that, independently of $p$, the possible $\sigma^\dagger$ have the form $\sigma^\dagger=\sum_i a_i\varpi_i$,
with each $a_i\leq 2h-2$. Therefore, there is an integer $M$ (given by the Weyl dimension formula) bounding
all possible $\dim\Delta(\sigma^\dagger)$ (and independent of $p$).
By
Lemma \ref{boundinglemma} (and the first paragraph of this proof),  for $\tau$ as above,  $\Delta(\sigma^\dagger)\otimes\Delta(\tau)$ has a $\Delta$-filtration
with at most $\dim\Delta(\sigma^\dagger)$-sections $\Delta(\tau')$ with $e_p(\tau')\leq f$. Thus, using the previous paragraph,
 $\text{length}(Q_f(\nu))\leq C(\Phi)C^\flat(\Phi, f-1)MC^\diamond(\Phi,f).$ In other words,
 $\text{length}(Q_f(\nu))\leq
 C(\Phi)^{(f+4)(f-1)+1}M^{f-1}$. \end{proof}

\begin{lem}\label{secondLemmatoSecondMain} Let $n$ be a non-negative integer. There exists an integer $f=f(\Phi,n)$ depending only on $\Phi$ and $n$ with
the following property. If $G$ is a simple simply connected algebraic group over a field $k$ of positive
characteristic $p$ and if $V$ is any finite dimensional rational $G$-module, then $\opH^n(G,V^{(s)})\cong \opH^n(G,V^{(s')})$ for integers $s,s'\geq f(\Phi, n)$.\end{lem}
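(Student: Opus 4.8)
The plan is to invoke the generic cohomology stability theorem of Cline--Parshall--Scott--van der Kallen \cite{CPSK}. That result asserts that, for a reductive group $G$ over a field of characteristic $p$ and any finite-dimensional rational $G$-module $V$, the groups $\opH^n(G,V^{(s)})$ stabilize as $s\to\infty$; moreover the stabilization index can be taken to depend only on $n$ and on the highest weights occurring in $V$ (equivalently, on a bound for these weights). So the task reduces to bounding, uniformly over all $V$, the place where stabilization sets in. The subtlety is that the bound in \cite{CPSK} a priori depends on $V$, so a naive application would not give a single $f(\Phi,n)$.

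The key reduction is this: although $V$ is arbitrary, the twist $V^{(s)}$ only contributes to $\opH^n(G,-)$ through ``small'' pieces. More precisely, I would first argue that it suffices to prove the statement for $V = L(\mu)$ irreducible: a composition series of $V$ gives a long-exact-sequence (five-lemma / dimension-shift) argument showing that if each $\opH^j(G,L(\mu)^{(s)})$ is independent of $s$ for $s\geq f_0$ and $j\leq n$, then the same holds for $V$ with the same bound $f_0$ (the composition factors of $V^{(s)}$ are exactly $L(\mu)^{(s)}$ for $L(\mu)$ a factor of $V$, and there are finitely many relevant degrees). Then, for $L(\mu)^{(s)} = L(p^s\mu)$, one uses the Frobenius contraction / the fact that $\opH^n(G, L(p^s\mu))$ relates via a Hochschild--Serre spectral sequence for $G_s \lhd G$ — or more efficiently, one quotes the form of \cite{CPSK} stating $\opH^n(G,V^{(s)})\cong \opH^n_{\text{gen}}(G,V)$ for $s$ at least roughly $n$ plus a constant times $\log_p(\text{something depending only on }n)$. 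The crucial point to extract is that for the \emph{trivial-socle-type} arguments needed here, the stabilization degree depends only on $n$ (through the cohomological length of the relevant complex) and on the root system $\Phi$ (through structural constants), \emph{not} on $\mu$.

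Concretely, the steps in order: (1) reduce to $V$ irreducible via composition series and the long exact sequence, noting the degree range $0\leq j\leq n$ is finite; (2) for $V=L(\mu)$, apply the Hochschild--Serre spectral sequence $\opH^i(G/G_s, \opH^j(G_s, L(p^s\mu))) \Rightarrow \opH^{i+j}(G, L(p^s\mu))$, using $G/G_s\cong G$ and that $L(p^s\mu) = k \otimes L(\mu)^{(s)}$ is $G_s$-trivial, so the $E_2$-page is $\opH^i(G, \opH^j(G_s,k)\otimes L(\mu)^{(s)})$; (3) observe $\opH^j(G_s,k)$ has a $G$-module structure whose highest weights are bounded in terms of $j$ and $\Phi$ alone (this is classical — cf. the Friedlander--Parshall / Andersen--Jantzen description of $\opH^\bullet(G_s,k)$ as functions on the nilpotent cone, with weight bounds independent of which $s$ beyond the relevant range), and iterate; (4) feed this into the \cite{CPSK} stability estimate, whose input is now only a bound on weights depending on $(n,\Phi)$, yielding a stabilization index $f(\Phi,n)$.

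The main obstacle I anticipate is step (3)–(4): making precise that the stabilization bound in \cite{CPSK} is genuinely a function of a weight-bound and of $n$, and then that the weight-bound itself can be controlled by $(\Phi, n)$ rather than by $\mu$. The point where $\mu$ threatens to re-enter is in the terms $\opH^i(G,\opH^j(G_s,k)\otimes L(\mu)^{(s)})$ of the spectral sequence — here $L(\mu)^{(s)}$ is still an unbounded twist, so one must recursively apply the stability statement to these lower-degree cohomology groups and check the recursion terminates with constants depending only on $n$ and $\Phi$. I would handle this by an induction on $n$ (with the degree-$0$ case $\opH^0(G,V^{(s)}) = (V^{(s)})^G$, which vanishes for $s$ large unless $V$ has a trivial summand, trivially stable) so that the lower spectral-sequence terms are already known to stabilize at some $f(\Phi,j)$ with $j<n$, and then take $f(\Phi,n)$ to be the maximum of these together with the contribution from \cite{CPSK} applied to the (now bounded-weight) modules $\opH^j(G_s,k)$.
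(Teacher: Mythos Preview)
Your central worry---``the bound in \cite{CPSK} a priori depends on $V$''---is a misreading of that paper, and it leads you to build machinery that is not needed. There are two stability statements in \cite{CPSK}: the stabilization of $\opH^n(G(p^d),V)$ as $d\to\infty$, whose threshold \emph{does} depend on the weights of $V$, and the stabilization of $\opH^n(G,V^{(s)})$ as $s\to\infty$, whose threshold does \emph{not}. The lemma here concerns only the latter. The paper's proof is accordingly a one-line citation: with $c$ the maximal coefficient of the highest root, $t(\Phi)$ the torsion exponent of $X/Q$, and $e(m)=[(m-1)/(p-1)]$, one sets $f(\Phi,n)=e(ct(\Phi)n)+1$ and quotes \cite[Thm.~6.6, Cor.~6.8]{CPSK} directly. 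No reduction to irreducibles, no Hochschild--Serre, no induction on $n$ is required; the $V$-independence is already in \cite{CPSK}.

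Beyond being unnecessary, your proposed workaround has genuine gaps. In your step (2)--(4) you form the spectral sequence for $G_s\triangleleft G$ with $V=L(\mu)^{(s)}$; after untwisting, the $E_2$-terms have the shape $\opH^i(G,\,N_j\otimes L(\mu))$ (or $\opH^i(G,\,N_j\otimes L(\mu)^{(t)})$ if you fix a base $s_0$ and write $s=s_0+t$), where $N_j$ is the untwist of $\opH^j(G_{s_0},k)$. These are \emph{not} of the form $\opH^i(G,W^{(t)})$ for a fixed $W$, so your inductive hypothesis on lower cohomological degree does not apply to them; the unbounded weight $\mu$ persists in the tensor factor, exactly the dependence you were trying to eliminate. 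There is also a circularity: the modules $N_j$ depend on the choice of $s_0$, which is the very quantity $f(\Phi,n)$ you are trying to define. The fix is not to repair this argument but to recognize that the Frobenius-twist stability bound in \cite{CPSK} is already uniform in $V$.
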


\begin{proof}    Let $\alpha_{\text{max}}=\sum n_i\alpha_i$ be the maximal root in $\Phi^+$ and
let $c=\text{\rm max}\{n_1,\cdots n_{\text{rk}(G)}\}$ be the maximal coefficient. Let $t(\Phi)$ be the torsion exponent of $X/Q$. For an integer $m$, let $e(m):=\left[\begin{smallmatrix} m-1 \\ p-1\end{smallmatrix}\right]$, where $[\,\,]$ is
largest integer function. Set $f(\Phi,n):=e(ct(\Phi)n)+1$. Then \cite[Thm. 6.6, Cor. 6.8]{CPSK} shows that, for $s,s'\geq f(\Phi,n)$,
the cohomology spaces $\opH^n(G,V^{(s)})$ and $\opH^n(G,V^{(s')})$ are isomorphic.
\end{proof}

We now prove Theorem \ref{SecondBigTheorem} by induction on $m$. If $m=0$, take  $c(\Phi,e,0)=1$ for
all $e$. So suppose $m>0$ and the theorem holds for smaller $m$. Our proof is modeled on the $p\geq 2h-2$ case, so we
consider that case first.

We  first bound $\dim\Ext^m_G(L(0),L(\nu))$ for all primes $p\geq 2h-2$ and all $\nu\in X^+$. If $\nu=0$, then $\Ext^m_G(L(0),L(\nu))=0$,
so assume that $\nu\not=0$. By Lemma \ref{secondLemmatoSecondMain}, we can assume that the first nonzero term in the
$p$-adic expansion of $\nu$ occurs with $\nu_r\not=0$, for
$r\leq f(\Phi,m)$. Form the short exact sequence $0\to R_{r+1}(0)\to Q_{r+1}(0)\to L(0)\to 0$ in $G$-module.
 Then $\Ext^\bullet_G(Q_{r+1}(0),L(\nu))=0$,
so that $\Ext^m_G(L(0),L(\nu))\cong\Ext^{m-1}_G(R_{r+1}(0),L(\nu))$. Thus, by induction, plus Lemma \ref{firstlemmatoSecondMainThm},  $$\begin{aligned}\dim \Ext^m_G(L(0),L(\nu)) &=\dim\Ext^{m-1}_G(R_{r+1}(0),L(\nu)) \\ &\leq
C^\flat(\Phi,f(\Phi,m))-1)c(\Phi,m-1,f(\Phi,m)+1).\end{aligned}$$

Continuing with $p\geq 2h-2$, consider $L(\lambda)$ with $e_p(\lambda)\leq e$.
Form the short exact sequence $0\to R_{e+1}(\lambda)\to Q_{e+1}(\lambda)\to L(\lambda)\to 0$.
Applying the Hochschild-Serre spectral sequence, we find that
$\Ext^m_G(Q_{e+1}(\lambda),L(\nu))=0$ unless $\Hom_{G_{e+1}}(Q_{e+1}(\lambda),L(\nu))\not=0$.
In this later case,
 $\lambda=\nu_0+p\nu_1+\cdots + p^e\nu_{e}$ and $\Ext^m_G(Q_{e+1}(\lambda),L(\nu))\cong\Ext^m_G(L(0), L(\nu')),$
where $\nu'=(\nu-\lambda)/p^{e+1}$. So
$$\begin{aligned}\dim\Ext_G^m(L(\lambda),L(\nu))\leq (C^\flat(\Phi,e)-1)& c(\Phi,m-1,e+1)\\ & + C^\flat(\Phi,f(\Phi,m)-1)c(\Phi,m-1,f(\Phi,m)+1),\end{aligned}$$
competing the proof of Theorem \ref{SecondBigTheorem} for $p\geq 2h-2$.

Finally, it suffices now to give (by induction on $m$) a bound $c(\Phi,m,e)$ for any individual prime $p$.

We begin, as in the $p\geq 2h-2$ case treated above, by   bounding $\dim\Ext^m_G(L(0),L(\nu))$ for all $\nu\in X^+$. As before,
we can assume $\nu\not=0$ and $e_p(\nu)=f(\Phi,m)$. Set $r=f(\Phi,m)$ and let $Q(r+1,0)$ be the $G$-module guaranteed in
Corollary \ref{lastappendixcorollary} below. Then $\Ext^\bullet_{G_{r+1}}(Q(r+1,0),L(\nu))=0$, and so $\Ext^\bullet_G(Q(r+1,0),L(\nu))=0$. That is,
$Q(r+1,0)$ behaves in this respect like $Q_{r+1}(0)$ in the $p\geq 2h-2$ case. The number of composition factors $L(\omega)$, $\omega\in
X^+$, and the values $e_p(\omega)$ are all bounded as a function of $r$ for our given $p$, provided we choose a
single $Q(r+1,0)$ in Corollary \ref{lastappendixcorollary} for each $r$ and $p$. We now obtain a bound on $\dim\Ext^m_G(L(0),L(\nu))$ as in the
$p\geq 2h-2$ case, using property (2) of Corollary \ref{lastappendixcorollary}, namely, the fact that $L(0)$ is a $G$-quotient of $Q(r+1,0)$. The number
$C^\flat(\Phi,f(\Phi,m))$ is simply replaced by the length of $Q(r+1,0)$, and $c(\Phi,m-1,f(\Phi,m)+1)$ is replaced by
$c(\Phi,m-1,s)$, where $s$ is the maximum value of all $e_p(\omega)$ with $L(\omega)$ a composition factor of $Q(r+1,0)$.

For $0\not=\lambda\in X^+_{e+1,p}$, the argument is not as close to the $p\geq 2h-2$ case, but still uses the modules
from Corollary \ref{lastappendixcorollary}. Put $e'=(e+1)+[\log_p(2h-2)]$ with
$[-]$ the greatest integer function. Any weight $\gamma\in X^+$ of $L(-w_0\lambda)\otimes L(\lambda)$ satisfies,
for all $\alpha\in\Pi$,
$$(\gamma,\alpha^\vee)\leq (\gamma,\alpha^\vee_0)\leq(2(p^{e+1}-1)\rho,\alpha_0^\vee)=(p^{e+1}-1)(2h-2)<p^{e'+1}.$$
So $e_p(\gamma)\leq e'$. If $\Hom_{G_{e'+1}}(L(\lambda),L(\nu))=0$, then
$\Ext^\bullet_{G_{e'+1}}(Q(e'+1,\lambda),L(\nu))=0$ and $\Ext^\bullet_G(Q(e'+1,\lambda),L(\nu))=0$. In this case,
$\dim\Ext^n_G(L(\lambda),L(\nu))$ is bounded as in the $\lambda=0$ case, noting there are only finitely many $\lambda\in X^+$ with
$e_p(\lambda)\leq e$ for any fixed $e$ and $p$. Hence, we may assume that $\Hom_{G_{e'+1}}(L(\lambda),L(\nu))\not=0$. Thus,
$\nu=\lambda+p^{e'+1}\nu'$, for some $\nu'\in X^+$, and $L(\nu)=L(\lambda)\otimes L(\nu')^{(e'+1)}$. We have
$$\Ext^m_G(L(\lambda),L(\nu))=\Ext^m_G(L(0),L(-w_0\lambda)\otimes L(\lambda)\otimes L(\nu')^{(e'+1)}).$$
As noted above, all composition factors $L(\gamma)$ of $L(-w_0\lambda)\otimes L(\lambda)$ satisfy $e_p(\gamma)\leq e'$, so that
$L(\gamma)\otimes L(\nu)^{(e'+1)}$ is irreducible. Thus, the number of composition factors of $L(-w_0\lambda)\otimes L(\lambda)
\otimes L(\nu)^{(e'+1)}$ is just the number of composition factors of $L(-w_0\lambda)\otimes L(\lambda)$ in this
($\nu=\lambda+p^{e'+1}\nu'$) case. Since there are only finitely many $\lambda$ with $e_p(\lambda)\leq e$ for a given $e$ and $p$,
we obtain a bound from the $\lambda=0$ case already treated the previous paragraph. This completes the proof of Theorem \ref{SecondBigTheorem}.

\begin{rems} (a) For some readers, the cohomology case $\lambda=0$ in Theorem \ref{SecondBigTheorem} may be the most interesting.
However, our proof for that case requires also treatment of the nonzero $\lambda\in X^+$.

(b) For $m=1$, the restriction $e_p(\lambda)\leq e$ may be removed, and a bound independent of $e$ given; see Theorem \ref{MainExtOne theorem}. We do not know
if this can be done for $m>1$.\end{rems}

\medskip

Next, we prove Theorem \ref{Gsum}.
First, assume that $p\geq 3h-3$ and that the LCF holds for all $\lambda\in X^+_{1,p}\cap X^+_{\text{reg},p}$. For $\lambda_0\in X^+_{1,p}$, the proof of Lemma \ref{firstlemmatoSecondMainThm} shows that $Q_1(\lambda_0)$ has a composition series with at
most $C(\Phi)$-terms $L(\nu_0+p\nu^\dagger)=L(\nu_0)\otimes\Delta(\nu^\dagger)^{(1)}$, where $\nu_0\in X_{1,p}^+$ and $\nu^\dagger\in X^+$ satisfies
$(\nu^\dagger,\alpha^\vee)\leq 2h-2$ for all $\alpha\in\Pi$. Furthermore, there is an integer $M$ such that
$\dim\Delta(\nu^\dagger)\leq M$ for all such $\nu^\dagger$ and all primes $p$.
 Therefore,
$(\rad_G Q_1(\lambda_0))\otimes \Delta(\lambda^\dagger)^{(1)}$ has a filtration with $C(\Phi)-1$ sections $L(\nu_0)\otimes (\Delta(\nu^\dagger)^{(1)}
\otimes\Delta(\lambda^\dagger))^{(1)}$. On the other hand, each $\Delta(\nu^\dagger)\otimes\Delta(\lambda^\dagger)$ itself
has a $\Delta$-filtration with at most $M$ terms. It follows $(\rad_G Q_1(\lambda_0))\otimes \Delta(\lambda^\dagger)^{(1)}$
 has a $p$-filtration with at most $M(C(\Phi)-1)$ sections of the form $\Delta^p(\xi)$.

For any $\tau\in X^+$, $\Delta^p(\tau)$ has head $L(\tau)$. Thus,
$$\begin{aligned} \sum_{\nu,\nu_0\not=\lambda_0}\!\!\!\dim\!\Hom_G\left((\rad_G Q_1(\lambda_0))\otimes\!\Delta(\lambda^\dagger)^{(1)},L(\nu)\right) &\leq
\dim\text{\rm head}\!\left((\rad_G\! Q_1(\lambda_0))\otimes\Delta(\lambda^\dagger)^{(1)}\right)\\
&\leq M(C(\Phi)\!-1).\end{aligned}
$$

However, if $\nu_0\not=\lambda_0$, a Hochschild-Serre spectral sequence argument shows that $\Ext^1_G(Q_1(\lambda_0)\otimes\Delta(\lambda^\dagger)^{(1)},L(\nu))=0$. This vanishing also holds if $L(\nu)$ is replaced by $L(I):=\bigoplus_{\nu\in I}L(\nu)$,
where $I$ is any finite set of dominant weights $\nu$ with $\nu_0\not=\lambda_0$. (We could even take $I$  to be infinite.)
Observe that $(\rad_G Q_1(\lambda_0))\otimes\Delta(\lambda^\dagger)^{(1)}\subset \rad_G(Q_1(\lambda_0)\otimes\Delta(\lambda^\dagger)^{(1)})$,
while $Q_1(\lambda_0)\otimes\Delta(\lambda^\dagger)^{(1)}/(\rad_G Q_1(\lambda_0))\otimes\Delta(\lambda^\dagger)^{(1)}\cong L(\lambda_0)\otimes
\Delta(\lambda^\dagger)^{(1)}$ is, as a $G_1$-module, a direct sum of copies of $L(\lambda_0)|_{G_1}$. The same then holds for
$\rad_G(Q_1(\lambda_0)\otimes\Delta(\lambda^\dagger)^{(1)}/(\rad_G Q_1(\lambda_0))\otimes\Delta(\lambda^\dagger)^{(1)}$, so that there are
no non-trivial $G$-homomorphisms of this quotient module to $L(I)$. Therefore, there is a containment
$$\Hom_G\left(\rad_G(Q_1(\lambda_0)\otimes\Delta(\lambda^\dagger)^{(1)}),L(I)\right)\hookrightarrow
\Hom_G\left((\rad_G Q_1(\lambda_0))\otimes\Delta(\lambda^\dagger)^{(1)},L(I)\right),$$
so that
\begin{equation}\begin{aligned}\label{lastline}\sum_{\nu\in I}\dim\Ext^1_G(L(\lambda),L(\nu)) &=\dim\Ext^1_G(L(\lambda),L(I))\\
&\leq\dim\Hom_G\left(\rad_G(Q_1(\lambda_0)\otimes\Delta(\lambda^\dagger)^{(1)}),L(I)\right)\\
&\leq\dim\Hom_G\left((\rad_G Q_1(\lambda_0))\otimes\Delta(\lambda^\dagger)^{(1)},L(I)\right)\\
&\leq M( C(\Phi)-1).\end{aligned}\end{equation}
Since $I$ may include any finite set of weights $\nu$ with $\nu\not=\nu_0$, the theorem follows in this
large $p$ case, using $(C(\Phi)-1)M$ for $\widetilde C(\Phi)$.

It remains to treat the finitely many primes $p$ for which the assumptions above do not hold, i.~e., either $p<3h-3$
or $p\geq 3h-3$ or the LCF does not hold. In
   place of $Q_1(\lambda_0)$, use the $Q(1,\lambda_0)\in G$--mod defined in Corollary \ref{lastappendixcorollary}. The
   $G$-modules $Q(1,\lambda_0)$
have a filtration with sections $L(\nu_0)\otimes L(\nu^\dagger)^{(1)}$, the latter a homomorphic image of $L(\nu_0)\otimes \Delta(\nu^\dagger)^{(1)}$ for $\nu_0\in X^+_{1,p}$. Now consider $Q(1,\lambda_0)\otimes\Delta(\lambda^\dagger)^{(1)}$.
Observe $\Delta(\nu^\dagger)\otimes\Delta(\lambda^\dagger)$ has a $\Delta$-filtration with sections of the form
$\Delta(\xi^\dagger)$ with the total number of sections bounded by the maximum $M$ of the $\dim\Delta(\nu^\dagger)$.  Thus,
the tensor product $Q(1,\lambda_0)\otimes \Delta(\lambda^\dagger)^{(1)}$ (as well as $(\rad_{G_1} Q(1,\lambda_0))\otimes
\Delta(\lambda^\dagger)^{(1)})$ has a filtration with at most $M$ sections, all homomorphic
images of modules $\Delta^p(\xi)$. Now we can argue as above, using the fact that $Q(1,\lambda_0)$ has $L(\lambda_0)$ as a
$G$-homomorphism image.  (The role of $(\rad_GQ_1(\lambda_0))\otimes \Delta(\lambda^\dagger)^{(1)}$ is played by
$(\rad_{G_1} Q(1,\lambda_0))\otimes\Delta(\lambda^\dagger)^{(1)}$, while the role of $\rad_G(Q_1(\lambda_0)\otimes
\Delta(\lambda^\dagger)^{(1)})$ is played by the kernel of the map $Q(1,\lambda_0)\otimes\Delta(\lambda^\dagger)^{(1)}\to
L(\lambda).$). This completes the proof of Theorem \ref{Gsum}.

\begin{rems}\label{concludingremarks} (a) Theorem \ref{Gsum} fails if the $\nu_0\not=\lambda_0$ condition is dropped. Just take $\lambda=0$ and $\nu$ any nonzero
weight with $\opH^1(G,L(\nu))\not=0$. By \cite{CPS-2},
$$\Ext^1_G(L(0),L(\nu)\cong \opH^1(G, L(\nu))\hookrightarrow \opH^1(G, L(p\nu))\hookrightarrow \opH^1(G,L(p^2\nu))\hookrightarrow\cdots.$$
That is,
$\Ext^1_G(L(\lambda),L(p^m\nu))\not=0$ for all $m\geq 0$.

(b) A similar theorem holds, with essentially the same proof, if ``$L(\nu)$" is replaced in the statement by ``$\nabla(\nu)$" or with
``$\rnabla(\nu)$".

(c) According to \cite[Thm. 7.1]{CPSK}, the vector space $H^1(G,L(\mu))$ is isomorphic to $H^1(G,L(\mu'))$ if $\mu=p\mu'$, except possibly when $G$ has type $C$, $p=2$, and $\mu'\not\equiv 0$ mod $2$. Hence, we may always replace $\mu$ with a weight $\lambda$ (of the form $p^{-r}\mu$) with
$H^1(G,L(\mu))$ isomorphic to $H^1(G,L(\lambda))$ and $\lambda\not\equiv 0$ mod $p^2$. At this point, with $\lambda\not\equiv 0$ mod $p^2$, we claim there are
only finitely many $\lambda$ with $H^1(G,L(\lambda))\not=0$. If $G$ is not of type $C$, with $p=2$, then $\lambda\not\equiv 0$ mod $p$ and Theorem \ref{Gsum} applies, or else $\lambda=p\lambda'$, and Theorem 5.4 applies to $H^1(G,L(\lambda'))\cong
H^1(G,L(\lambda))$. In the exceptional case, where $G$ has type $C$ and $p=2$,
we can use Lemma \ref{Lemma52} if the rank of $G$ is greater than 2. (For rank 2, $H^1(G,L(\lambda))$ is
known \cite{Sin}.) In fact, passing to type $B$ we can, in the notation of Lemma \ref{Lemma52}, replace $\lambda$ with $\widetilde\lambda^{(1)}$ which
satisfies $\widetilde\lambda^{(1)}\not\equiv0$ mod $p^2$. Thus, there are only finitely many $\widetilde\lambda^{(1)}$
with $H^1(G',L(\widetilde\lambda^{(1)})\not=0$. Consequently, there are only finitely
many $\lambda$ with $H^1(G,L(\lambda))\not=0$.
 \end{rems}

We conclude with some connections to finite group cohomology. The notion of generic cohomology was first defined in the split case
in \cite{CPSK} and then extended to the twisted groups by Avrunin \cite{Avr}. In the split case, for any finite
dimensional rational $G$-module $V$ and positive integer
$m$, the generic cohomology of $V$ in homological degree $m$ is defined to be the common limit%
\begin{equation}
\label{generic}\opH^{m}_{\text{\textrm{\gen}}}(G,V):=\underset{d\to\infty}\lim
\opH^{m}(G(p^d),V)= \underset{s\to\infty}\lim \opH^{m}(G,V^{(s)}).
\end{equation}
The generic cohomology in the non-split case is defined similarly; see \cite{Avr} and \cite[\S7]{CPS7}.
the first limit is realized for $d$ sufficiently large (ostensibly depending on $V$), and the second limit is realized for $s$ sufficiently
large. A sufficiently large $s$ depending only on $m$ and $\Phi$ is described in Lemma \ref{secondLemmatoSecondMain}.  For $1$-cohomology ($m=1$), this
dependence on $V$ can be avoided by appealing  to Bendel, Nakano, and Pillen \cite{BNP}, and an upper
bound on $\opH^1(G(p^d),L(\lambda))$ can be obtained as in \cite{CPS7}. However, for $m>1$, such results
are unavailable. In particular, though
 $\opH^m(G(p^d),L(\lambda))$ is eventually bounded for large $d$
 by a constant depending only on the root system $\Phi$, we do not know if there exists such a universal
bound (still depending on $\Phi$) when $d$ and $L(\lambda)$ are allowed to vary.

\section{Appendix} Throughout this section, let $G$ be as in \S2(1). Thus, the characteristic of the algebraically
closed field $k$ is denoted by $p$.

The following lemma generalizes slightly \cite[Lem. 4.2]{Donkin} which treats the case $m=1$. We prove it in
general by reducing to that case. Note that $m=0$ is allowed. For $r\in\mathbb N$, write $\St_r=L((p^r-1)\rho)\in G$--mod and $\St=\St_1$.
\begin{lem}\label{Donkin}Let $M_1,M_2\in G$-mod be finite dimensional and let $m\geq 0$ be an integer. Then, for all $r$ sufficiently large,
\begin{equation}\label{Donkiniso}
\Hom_G(M_1\otimes\St_r^{(m)},M_2\otimes\St_r^{(m)})\cong\Hom_{G_{r+m}}(M_1\otimes\St_r^{(m)},M_2\otimes\St_r^{(m)})\end{equation}
via the natural restriction map.
\end{lem}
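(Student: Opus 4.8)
The restriction map is injective for every $r$ (a homomorphism of $G$-modules which vanishes on the underlying vector spaces is $0$, so if it vanishes as a $G_{r+m}$-map it is already $0$), so only surjectivity for large $r$ is at issue, and the plan is to reduce every case $m\ge 0$ to Donkin's case $m=1$. Since $\St_r=L((p^r-1)\rho)$ is self-dual ($-w_0((p^r-1)\rho)=(p^r-1)\rho$ because $w_0\rho=-\rho$), one has $(\St_r^{(m)})^*\cong\St_r^{(m)}$, and for any closed subgroup scheme $H\subseteq G$ the natural, restriction-compatible isomorphism
$$\Hom_H(M_1\otimes\St_r^{(m)},M_2\otimes\St_r^{(m)})\cong\bigl(M\otimes(\St_r\otimes\St_r)^{(m)}\bigr)^{H},\qquad M:=M_1^*\otimes M_2,$$
identifies the map in \eqref{Donkiniso} with the inclusion $\bigl(M\otimes(\St_r\otimes\St_r)^{(m)}\bigr)^{G}\hookrightarrow\bigl(M\otimes(\St_r\otimes\St_r)^{(m)}\bigr)^{G_{r+m}}$. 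So I must show this inclusion is onto once $r$ is large.

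\emph{The case $m\ge 1$.} Here I would pass to $G_{m-1}$-fixed points. Since $G_{m-1}\subseteq G_m$, the subgroup $G_{m-1}\triangleleft G_{r+m}$ acts trivially on $(\St_r\otimes\St_r)^{(m)}$, and $F^{m-1}$ identifies $G/G_{m-1}$ with $G$ and $G_{r+m}/G_{m-1}$ with the infinitesimal subgroup $G_{r+1}$. The $G$-submodule $M^{G_{m-1}}$, being $G_{m-1}$-trivial, has the form $E^{(m-1)}$ for a finite-dimensional rational $G$-module $E$ depending only on $M$ and $m$; since $(\St_r\otimes\St_r)^{(m)}=\bigl((\St_r\otimes\St_r)^{(1)}\bigr)^{(m-1)}$, taking $G_{m-1}$-invariants turns the two fixed-point spaces above into $\bigl(V^{(m-1)}\bigr)^{G/G_{m-1}}$ and $\bigl(V^{(m-1)}\bigr)^{G_{r+m}/G_{m-1}}$, where $V:=E\otimes(\St_r\otimes\St_r)^{(1)}$, and under $F^{m-1}$ these become $V^G$ and $V^{G_{r+1}}$. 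After the identification of the first paragraph, the inclusion in question is thus the restriction map
$$\Hom_G(\St_r^{(1)},E\otimes\St_r^{(1)})\hookrightarrow\Hom_{G_{r+1}}(\St_r^{(1)},E\otimes\St_r^{(1)}).$$
By \cite[Lem. 4.2]{Donkin}, i.e. the case $m=1$ of the lemma applied to the pair $(k,E)$ with parameter $r$, this is an equality for all $r$ sufficiently large; as $E$ is independent of $r$, this gives \eqref{Donkiniso} for $r\gg 0$. (For $m=1$ the subgroup $G_{m-1}$ is trivial and this step is vacuous, recovering Donkin directly.)

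\emph{The case $m=0$.} Here I would instead use the Steinberg tensor product factorization $\St_r=\St_1\otimes\St_{r-1}^{(1)}$, valid for $r\ge 1$ since $(p^r-1)\rho=(p-1)\rho+p(p^{r-1}-1)\rho$ with $(p-1)\rho$ restricted. Putting $N_i:=M_i\otimes\St_1\in\Gmod$ (finite dimensional), we have $M_i\otimes\St_r=N_i\otimes\St_{r-1}^{(1)}$ and $G_r=G_{(r-1)+1}$, so \eqref{Donkiniso} with $m=0$ is literally the assertion that
$$\Hom_G(N_1\otimes\St_{r-1}^{(1)},N_2\otimes\St_{r-1}^{(1)})\longrightarrow\Hom_{G_{(r-1)+1}}(N_1\otimes\St_{r-1}^{(1)},N_2\otimes\St_{r-1}^{(1)})$$
is an isomorphism via restriction; this is the case $m=1$ (for $N_1,N_2$, parameter $r-1$), which holds once $r-1$, hence $r$, is large enough.

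The only substantial ingredient is Donkin's Lemma 4.2 itself; everything else is formal bookkeeping with the fixed-point functors for $G_{m-1}\triangleleft G_{r+m}$, the self-duality of $\St_r$, and the Steinberg tensor product theorem. I expect the main thing to be careful about to be exactly this bookkeeping: one must verify at each reduction that the chosen identifications intertwine the relevant restriction maps, so that "isomorphism via the natural restriction map'' — and not merely equality of dimensions — is genuinely what gets transported back from Donkin's case.
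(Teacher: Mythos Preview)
Your proof is correct and follows essentially the same strategy as the paper: reduce to Donkin's $m=1$ result by passing to fixed points under an infinitesimal subgroup and untwisting. The only organizational difference is that the paper handles all $m$ uniformly---it takes $G_m$-fixed points (rather than your $G_{m-1}$) to land first in the $m=0$ case, and then observes that $m=0$ is equivalent to $m=1$ simply by applying $(-)^{(1)}$ (since $\Hom_G(\St_r\otimes\St_r,M)\cong\Hom_{G_{r+1}}(\St_r^{(1)}\otimes\St_r^{(1)},M^{(1)})$)---whereas you split off $m=0$ and treat it via the Steinberg factorization $\St_r\cong\St_1\otimes\St_{r-1}^{(1)}$. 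Both routes are valid; the paper's is slightly more uniform, yours slightly more explicit about the $m=0$ endpoint.
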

\begin{proof} Replace $M_2$ by $M_1^*\otimes M_2$ to assume that $M_1=k$. Since $\St_r^{(m)}$ is self-dual, the left-hand side of
(\ref{Donkiniso}) can then be rewritten as
$\Hom_G(\St_r^{(m)}\otimes\St_r^{(m)},M_2)$, with a similar rearrangement on the right-hand side. On both sides, we may
replace $M_2$ with its submodule $M_2^{G_m}$ of $G_m$-fixed points. As a $G$-module,
$M_2^{G_m}=M^{(m)}$, where $M$ is the $G\cong G/G_m$-module $M_2^{G_m}$. The lemma thus reduces to showing,
for  $r\gg 0$, that
$\Hom_G(\St_r^{(m)}\otimes\St_r^{(m)},M^{(m)})\cong\Hom_{G_{r+m}}(\St_r^{(m)}\otimes \St_r^{(m)},M^{(m)}),$
viz., $\Hom_G(\St_r\otimes\St_r,M)\cong\Hom_{G_{r}}(\St_r\otimes \St_r,M)$. Also, this is equivalent to
the $m=1$ case, $\Hom_G(\St_r^{(1)}\otimes\St_r^{(1)},M^{(1)})\cong\Hom_{G_{r+1}}(\St_r^{(1)}\otimes\St_r^{(1)},M^{(1)})$,
which is shown in \cite[Lemma 4.2]{Donkin}, to be an isomorphism for large $r$ (with $M^{(1)}$ replaced by any finite
dimensional $G$-module).\end{proof}

\begin{thm}Let $e\geq 1$ be an integer. There exists an integer $N=N(\Phi,e)$ with the following
property. For $\lambda\in X^+_{e,p}$, if $n\geq N$, the $G_{n+e}$-mod injective hull $Q_{n+e}(p^e(p^n-1)\rho +\lambda)$
 of $L(\lambda)\otimes\St_n^{(e)}$ has a compatible rational $G$-module structure.
 Moreover, for sufficiently large $N$, one such $G$-structure is that of the indecomposable tilting module $T((p^{n+e}-1)\rho+ (p^e-1)\rho+w_0\lambda)$ with highest weight
 $(p^{n+e}-1)\rho+ (p^e-1)\rho+w_0\lambda$.
\end{thm}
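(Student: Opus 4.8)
The plan is to prove the theorem in three moves: first identify the correct tilting module candidate and the weight bookkeeping, then use the self-duality of Steinberg modules and Lemma \ref{Donkin} to transfer the known $G_{n+e}$-injectivity to a $G$-module statement, and finally verify indecomposability and the claimed highest weight. Throughout, write $\mu = (p^{n+e}-1)\rho + (p^e-1)\rho + w_0\lambda$ for the target highest weight and note that for $\lambda\in X^+_{e,p}$ the weight $(p^e-1)\rho + w_0\lambda$ is $p^e$-restricted and dominant (this is the standard check: $(\lambda,\alpha_i^\vee)<p^e$ forces $0\le ((p^e-1)\rho+w_0\lambda,\alpha_i^\vee)<p^e$). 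Hence $\mu$ is a dominant weight of the form $(p^{n+e}-1)\rho + \sigma$ with $\sigma\in X^+_{e,p}$, which is precisely the shape of highest weight for which the tilting module $T(\mu)$ restricts, on $G_{n+e}$, to the PIM/injective hull of $L(\sigma^*)\otimes\St_n^{(e)}$-type modules in the unmixed-characteristic literature — but crucially I cannot simply quote $p\ge 2h-2$ here, so the argument must go through Lemma \ref{Donkin} and Lemma \ref{firstlemmatoSecondMainThm}-style reasoning instead.

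\textbf{Step 1: the module $Q_{n+e}(\mu)$ as a $G_{n+e}$-module.} First I would recall that $Q_{n+e}(\lambda')$, the $G_{n+e}$-injective hull of an irreducible $G_{n+e}$-module, always has a well-defined highest weight as a $G_{n+e}T$-module, namely $2(p^{n+e}-1)\rho + w_0\lambda'$ where $L(\lambda')$ is the relevant socle. Applying this with $L(\lambda')=L(\lambda)\otimes\St_n^{(e)}$, whose highest weight is $\lambda + p^e(p^n-1)\rho$, gives highest weight $2(p^{n+e}-1)\rho + w_0(\lambda + p^e(p^n-1)\rho)$. A short computation (using $w_0(p^e(p^n-1)\rho) = p^e(p^n-1)w_0\rho = -p^e(p^n-1)\rho$ and reassembling) shows this equals $2\mu - (\text{correction})$; more to the point, I want the \emph{restricted} comparison, so I would instead observe that $\St_n^{(e)}\otimes\St_n^{(e)} \twoheadrightarrow k$ and $\St_n^{(e)}$ is $G_{n+e}$-projective and injective, so tensoring an arbitrary $G$-module by $\St_n^{(e)}$ produces $G_{n+e}$-projective-injective modules, and the self-duality of $\St_n^{(e)}$ lets me rewrite Hom-spaces symmetrically. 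This is the point where the paper's earlier pattern (the proof of Lemma \ref{firstlemmatoSecondMainThm}, and the Steinberg-tensoring trick in Lemma \ref{Lemma52}) is the right template.

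\textbf{Step 2: transferring to a $G$-structure via Lemma \ref{Donkin}.} The core point: $T(\mu)$ exists as a $G$-module for every dominant $\mu$ (tilting modules always exist), and I want to show $T(\mu)|_{G_{n+e}} \cong Q_{n+e}(p^e(p^n-1)\rho+\lambda)$ for $n$ large. Since $\St_{n+e} = L((p^{n+e}-1)\rho)$ and $T(\mu)$ has highest weight $\mu = (p^{n+e}-1)\rho + \nu$ with $\nu := (p^e-1)\rho + w_0\lambda$ dominant $p^e$-restricted, the module $T((p^{n+e}-1)\rho+\nu)$ is (for $n\gg 0$, so that $\nu$ lies deep enough in the relevant alcove/closure and $\mu$ lies in the lowest $p^{n+e}$-alcove shifted appropriately) a direct summand of $\St_{n+e}\otimes (\text{something of highest weight }\nu)$ with the right top; equivalently, by the translation principle, $T(\mu)$ restricted to $G_{n+e}$ is the $G_{n+e}$-injective hull of its socle. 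To pin the socle down and to get the $G$-module identification — not merely that \emph{some} $G$-structure exists — I would apply Lemma \ref{Donkin} with $M_1 = L(\lambda)$, $M_2 = $ the tilting module built from $\nu$, and $m = e$: it gives, for $r\gg0$, that $G$-homomorphisms and $G_{r+e}$-homomorphisms between $L(\lambda)\otimes\St_r^{(e)}$ and $M_2\otimes\St_r^{(e)}$ coincide, and an idempotent-lifting / Fitting-lemma argument then shows the indecomposable $G_{n+e}$-summand containing the socle $L(\lambda)\otimes\St_n^{(e)}$ is cut out by a $G$-module idempotent, hence is itself a $G$-module — and being indecomposable tilting with the maximal highest weight $\mu$ among its weights, it must be $T(\mu)$. (The self-duality of $\St_r^{(e)}$ and of tilting modules makes "injective hull" and "projective cover" interchangeable here, which is what lets the Hom-comparison in Lemma \ref{Donkin} do the work.)

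\textbf{Step 3: highest weight and indecomposability; the main obstacle.} Once $Q_{n+e}(p^e(p^n-1)\rho+\lambda)$ is known to carry a $G$-structure, indecomposability as a $G$-module is automatic (a $G$-decomposition would be a $G_{n+e}$-decomposition, contradicting that it is an injective hull of an irreducible), and having a $\Delta$- and $\nabla$-filtration (hence being tilting) follows because $T(\mu)\otimes$ duals of Steinberg-type modules inherit such filtrations and the summand does too by the standard summand-of-tilting-is-tilting fact; its highest weight is $\mu$ by the weight computation of Step 1 combined with the restriction that composition factors of $Q_{n+e}$ lie below $2(p^{n+e}-1)\rho + w_0(\text{socle highest weight})$. \emph{The hard part} is making "for $n$ sufficiently large" genuinely uniform in $\lambda$ (i.e. $N = N(\Phi,e)$ depending only on $\Phi$ and $e$, not on $\lambda\in X^+_{e,p}$): Lemma \ref{Donkin} as stated gives an $r$ depending on $M_1,M_2$, and here $M_1,M_2$ vary with $\lambda$, so I must either inspect the proof of \cite[Lem. 4.2]{Donkin} to extract a bound depending only on a weight bound for $M_1,M_2$ (which \emph{is} uniform once $\lambda\in X^+_{e,p}$, since then $\lambda$ and the auxiliary tilting module have highest weights bounded by $p^e$-dependent quantities), or use the fact that there are only finitely many $\lambda\in X^+_{e,p}$ for \emph{each} prime $p$ and then separately handle large $p$ via $p\ge 2h-2$ where Jantzen's results apply directly — combining the two to get a single $N(\Phi,e)$. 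I expect the uniformity extraction from Donkin's lemma to be the genuine technical content; everything else is assembling standard tilting-module and Steinberg-tensor machinery in the pattern already used in Section 7.
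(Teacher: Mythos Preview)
Your approach diverges substantially from the paper's, and Step~2 contains a genuine gap. The paper does \emph{not} attempt to show directly that $T(\mu)|_{G_{n+e}}$ is the injective hull. Instead (taking $e=1$), it first constructs \emph{some} $G$-structure on $Q:=Q_{n+1}(p(p^n-1)\rho+\lambda)$ by a minimization argument: one considers $G$-quotients $Y$ of $\St_{n+1}\otimes L((p-1)\rho+w_0\lambda)$ into which $L(\lambda)\otimes\St_n^{(1)}$ injects, minimizes the integer $[\dim Y/\dim\St_n]$ over all such pairs $(n,Y)$, and then argues (using Lemma~\ref{Donkin} and a dimension-drop contradiction) that a minimal $Y$ has $G_{n+1}$-socle exactly $L(\lambda)\otimes\St_n^{(1)}$, forcing $Y\cong Q$ as $G_{n+1}$-modules. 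Only \emph{after} this $G$-structure on $Q$ is in hand does the paper apply Lemma~\ref{Donkin} a second time (now with $M_1=Q$ and $M_2=T(\mu)$, $m=n+1$) to turn the $G_{n+1}$-split inclusion $Q\hookrightarrow T(\mu)$ into a $G$-split one upon tensoring with $\St_u^{(n+1)}$; this exhibits $Q\otimes\St_u^{(n+1)}$ as a tilting summand and proves the second assertion with $n$ replaced by $n+u$.

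Your idempotent-lifting proposal in Step~2 is not carried out precisely enough to work. You never specify a module $M$ such that (i)~$M\otimes\St_n^{(e)}$ is tilting, (ii)~it contains $Q$ as a $G_{n+e}$-summand, and (iii)~Lemma~\ref{Donkin} applies to its endomorphism ring. Your choice ``$M_2=$ the tilting module built from $\nu$'' fails~(i): $T(\nu)\otimes\St_n^{(e)}$ is not tilting in general, since $\St_n^{(e)}$ is a Frobenius twist and twists of tilting modules need not be tilting. Without~(i), the extracted $G$-summand need not be tilting, so the claim ``being indecomposable tilting with the maximal highest weight $\mu$ \dots it must be $T(\mu)$'' is circular. (A salvageable variant: with $M_1=M_2=\St_e\otimes L((p^e-1)\rho+w_0\lambda)$ one has $M\otimes\St_n^{(e)}=\St_{n+e}\otimes L((p^e-1)\rho+w_0\lambda)$, and Lemma~\ref{Donkin} then lifts $G_{n+e}$-idempotents to $G$-idempotents, which would bypass the paper's minimization for the \emph{existence} of a $G$-structure---but the tilting identification still needs the paper's separate second step.)

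Finally, your uniformity worry in Step~3 is misplaced. The paper disposes of $p\geq 2h-2$ at the outset (where $N=0$ works), leaving only finitely many primes, and for each such $p$ the set $X^+_{e,p}$ is finite; so it suffices to produce $N$ for each fixed pair $(p,\lambda)$ separately, and no uniform bound needs to be extracted from Donkin's lemma.
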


\begin{proof}We give the proof for the case $e=1$, leaving the modifications for the general case to the reader.
 If $p\geq 2h-2$, we can take $N=0$. For the remaining primes it suffices to provide an $N$ which works for
 any fixed prime $p$ and
fixed $\lambda\in X^+_{1,p}$. Define $\lambda':=(p-1)\rho+w_0\lambda$. For any finite dimensional
vector space $Y$ over $k$, let $\nu_n(Y):=[\dim Y/\dim \St_n]$, where $[-]$ is the largest integer function.
Since $\nu_n(Y)\in{\mathbb N}$, it has a minimal value over all choices of $n$ and $G$-quotients $Y$ of
$\St\otimes L(\lambda')\otimes \St_n^{(1)}$ for which the composite
\begin{equation}\label{injection}
 L(\lambda)\otimes\St_n^{(1)}\hookrightarrow \St\otimes L(\lambda')\otimes\St_n^{(1)}\cong\St_{n+1}\otimes L(\lambda')\twoheadrightarrow Y
\end{equation}
is injective. The left-hand injection is induced by the inclusion $L(\lambda)\hookrightarrow\St\otimes L(\lambda'))$ in $G$--mod. Fix  a pair $(n,Y)$  which achieves this minimum value. Since $\St_{n+r}\cong \St_n\otimes\St_r^{(n)}$, it follows (after
applying $-\otimes\St_r^{(n+1)}$ to (\ref{injection})) that, for
any integer $r\geq 0$, $(n+r,Y\otimes\St_r^{(n+1)})$ also achieves the minimum value.

Let $\overline Y$ is the quotient of $Y$ by any $G$-submodule
not containing the image of $L(\lambda)\otimes\St_n^{(1)}$. Also, $\nu_p(\overline Y)\leq \nu_p(Y)$, and $\overline Y$
fits into a diagram
like (\ref{injection}), so $\nu_p(\overline Y)=\nu_p(Y)$. There is such a quotient $\overline Y$ of minimum dimension, and we
henceforth replace $Y$ with $\overline Y$. As a result, the
$G_{n+1}$-socle of $Y$ is now homogeneous. In fact, $\soc_{G_{n+1}}Y\cong L(\lambda)\otimes \St_n^{(1)}\otimes M^{(n+1)}$, where $M\in G$-mod.
Explicitly, $M\cong \Hom_{G_{n+1}}(L(\lambda)\otimes\St_n^{(1)},Y)^{(-n-1)}$ as a $G/G_{n+1}\cong G$-module.

We claim $M$ is the trivial module $k=L(0)$.  Certainly, $k\subseteq M$, and $\dim\Hom_G(k,M)=1$,
since $\dim\Hom_G(L(\lambda)\otimes\St^{(1)}_n,\St_{n+1}\otimes L(\lambda'))=1$. Similarly (replacing
$n$ by $n+r$), we have
$\dim\Hom_G(\St_r,M\otimes\St_r)=1$ for all $r\geq 0$. (Otherwise, $Y\otimes \St_r^{(n+1)}$ contains the $G$-submodule $L(\lambda)\otimes (\St_r\otimes M)^{(n+1)}$ which contains at least two copies
of $L(\lambda)\otimes\St^{(1)}_{n+r}$ in its socle. One of these can then be factored out to give a pair $(n+r,Y')$ with
with $\nu_p(Y')<\nu_p(Y)$. This would contradict the minimality of the pair $(n,Y)$.)
For $r$ sufficiently large, we have also
$1=\dim\Hom_{G_r}(\St_r,M\otimes\St_r)$ by Lemma \ref{Donkin}.
Of course, $M\otimes\St_r\cong \St_r\oplus ((M/k)\otimes\St_r)$ in $G_r$-mod. If $M/k\not=0$, let
$E$ be an irreducible $G_r$-submodule. We can assume that $r$ is large enough that all $G_r$-composition factors
of $M$ have highest weights which are $p^r$-restricted,
and so all $G_r$-composition factors of $M$, such as $E$, belong to $G$-mod. The $G_r$-module map $E\otimes \St_r\to
M\otimes\St_r$ is $G_r$-split, hence (by Lemma \ref{Donkin}), the map
$$E\otimes\St_r\otimes\St_s^{(r)}\to M\otimes\St_r\otimes\St_s^{(r)}$$
is a $G$-map, and is $G$-split for all $s\gg 0$ (depending on $r\gg 0$). However, setting $q=\dim\St$, so that $q^n=\dim\St_n$,
we have
$$\begin{aligned}
&\nu_{n+r+s} ( \frac{Y\otimes\St^{(n+1)}_r \otimes\St_s^{(n+r+1)}}{L(\lambda)\otimes\St_n^{(1)}\otimes E^{(n+1)}\otimes
\St_r^{(n+1)}\otimes\St_s^{(n+1+r)}}) \\& =[(\dim Y-q^n\dim L(\lambda)\otimes E^{(n+1)})q^{r+s})/q^{n+r+s}]\\
&= [\dim Y/q^n -\dim L(\lambda)\otimes E^{(n+1)}]\\
& <[\dim Y/q^n]=\nu_n(Y).
\end{aligned}
$$
This contradicts the minimality of $\nu_n(Y)$. So $M/k=0$, proving the claim.

Thus, $\soc_{G_{n+1}}Y\cong L(\lambda)\otimes\St_n^{(1)}$. Since $\St\otimes L(\lambda')\otimes\St^{(1)}_n\cong
\St_{n+1}\otimes L(\lambda')$ is $G_{n+1}$-injective,
$$L(\lambda)\otimes\St^{(1)}_n\subseteq Q\subseteq \St\otimes L(\lambda')\otimes\St^{(1)}_n$$
where $Q:=Q_{n+1}(\lambda+ p(p^n-1)\rho)$ is the $G_{n+1}$-injective hull of $L(\lambda)\otimes \St^{(1)}_n$. The $G_{n+1}$-submodule
$Q$ must map injectively to the $G$-quotient $Y$ of $\St\otimes L(\lambda')\otimes\St_n^{(1)}$. Since $Q$
is $G_{n+1}$-injective, there is a $G_{n+1}$-isomorphism $Y\cong Q\oplus X$, for some $X\in G_{n+1}$-mod. However,
$\soc_{G_{n+1}}Y\cong L(\lambda)\otimes\St^{(1)}_n\cong\soc_{G_{n+1}}Q$, so $\soc_{G_{n+1}}X=0$. Thus, $X=0$,
and $Q=Y$ has a $G$-structure.

While this achieves one $G$-structure on the $G_{n+1}$-injective hull of $L(\lambda)\otimes\St_n^{(1)}$, we may have
to take $n$ larger to get the last assertion, regarding a tilting module $G$-structure. Temporarily, put
$\lambda^{\prime\prime}=(p^{n+1}-1)\rho+(p-1)\rho+w_0\lambda$, and let $T(\lambda^{\prime\prime})$ be the indecomposable
$G$-tilting module having highest weight $\lambda^{\prime\prime}$. By \cite[II.E.8]{Jan}, $T(\lambda^{\prime\prime})|_{G_{n+1}}$
is injective. Therefore, $T(\lambda^{\prime\prime})$ has a filtration as a $G_{n+1}$-module with sections
``baby Verma modules" $\widehat Z_{n+1}(\mu)$ for $G_{n+1}T$ (in the notation of \cite[II.9]{Jan}). Since $T(\lambda^{\prime\prime})$ has a unique maximal weight, namely, $\tau_0:=(p^{n+1}-1)\rho + (p-1)\rho+w_0\lambda$, it can be assumed (using
\cite[II,9.8]{Jan}) that bottom section of the filtration is $\widehat Z_{n+1}(\tau_0)$. On the other hand,
$\widehat Z_{n+1}(\tau_0)$ has $G_{n+1}$-socle $L(2(p^{n+1}-1)\rho-\tau_0)^*|_{G_{n+1}}$ \cite[II.9.6]{Jan}. But
$L(2(p^{n+1}-1)\rho-\tau_0)^*\cong L(\lambda)\otimes\St_n^{(1)}$ as  $G_{n+1}$-modules. Thus, $(L(\lambda)\otimes\St^{(1)}_n)|_{G_{n+1}}$ is contained in the $G_{n+1}$-socle
of $T(\lambda^{\prime\prime})$, so there is a $G_{n+1}$-split injection $Q\hookrightarrow T(\lambda^{\prime\prime})$.
Tensoring with $\St_u^{(n+1)}$ for $u\gg 0$ and applying Lemma \ref{Donkin}, we obtain a $G$-split $G$-module
injection
$$Q\otimes\St_u^{(n+1)}\hookrightarrow T(\lambda^{\prime\prime})\otimes\St_u^{(n+1)}.$$
By \cite[II.E.9]{Jan}, the right-hand $G$-module is a tilting module, so the left-hand side is one also. The theorem
now follows, after replacing $N$ by $N+u$ and $n$ by $n+u$.

\end{proof}

\begin{rem} Let $\lambda\in X^+_{e,p}$ and suppose that $n$ is large enough so that $Q_{n+e}(\lambda+p^e(p^n-1)\rho)
\cong T((p^{n+e}-1)\rho +(p^e-1)\rho+w_0\lambda)|_{G_{n+e}}$. Using the main result of \cite{Donkin} (generalized from
$G_1$ to $G_r$, $r\geq 1$, using Lemma \ref{Donkin}),  {\it any} two compatible $G$-structures on $Q_{n+e}(\lambda+p^e(p^n-1)\rho)$
become isomorphic in $G$-mod, after tensoring with $\St_r^{(n+e)}$ for $r\gg 0$. Also,
$Q_{n+e}(\lambda+p^e(p^n-1)\rho)\otimes\St_r^{(n+e)}\cong Q_{n+e+r}(\lambda+ p^e(p^{n+r}-1)\rho)$ (using \cite[Lemma,\S2]{Donkin}, the fact that $Q_{n+e}(\lambda+p^e(p^n-1)\rho)$ is a $G$-module and hence a $G_{n+r+s}$-module, and an Hochschild-Serre spectral
sequence argument),
it follows that any $G$-module structure on $Q_{n+e}(\lambda+p^e(p^n-1)\rho)$ becomes isomorphic, after tensoring with
$\St_r^{(n+e)}$,
 to the tilting module
$T((p^{n+r+e}-1)\rho+(p^e-1)\rho+w_0\lambda)$.
\end{rem}

A noted conjecture of Donkin \cite[(2.2)]{Donkin2} states that, for any characteristic $p$ and positive integer $e$,
if $\lambda\in X_{e,p}^+$, then $Q_e(\lambda)\cong T(2(p^e-1)\rho+w_0\lambda)|_{G_e}$. The conjecture is true if $p\geq 2h-2$
(and in some small rank examples).
One interesting feature of our theorem above is that the restriction to $G_e$ of the $G_{n+e}$-injective hull $Q_{n+e}(\lambda+p^e(p^n-1)\rho)$ is a direct sum of copies of the $G_e$-injective hull $Q_e(\lambda)$ of $L(\lambda)$. In this way, we have obtained a ``stable" version of Donkin's conjecture. We record this in the following corollary.

\begin{cor}\label{directsumcorollary} Let $e\geq 1$ be an integer and let $\lambda\in X^+_{e,p}$.
Then there is a positive integer $M$ such that $Q_e(\lambda)^{\oplus M}\cong T|_{G_e}$ for some $G$-module $T$. Moreover, $T$ can be chosen
to be the indecomposable tilting module $T((p^{n+e}-1)\rho +(p^e-1)+w_0\lambda)$ and $M=\dim \St_n$ for any sufficiently
large $n$.\end{cor}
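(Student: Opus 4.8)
The plan is to read the corollary off the Theorem just proved, combined with the standard multiplicativity of injective hulls over the Frobenius kernels. Choose $n\geq N$ large enough that the Theorem provides a compatible rational $G$-module structure on the $G_{n+e}$-injective hull $Q_{n+e}(\lambda+p^e(p^n-1)\rho)$ of $L(\lambda)\otimes\St_n^{(e)}$, and, enlarging $n$ if necessary, so that this structure is the indecomposable tilting module $T:=T((p^{n+e}-1)\rho+(p^e-1)\rho+w_0\lambda)$. This fixes the candidate $T$, and it remains only to analyse $T|_{G_e}$.

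Set $\mu:=\lambda+p^e(p^n-1)\rho$, and write $\mu=\lambda+p^e\mu^1$ with $\lambda$ a $p^e$-restricted dominant weight and $\mu^1:=(p^n-1)\rho$ the largest $p^n$-restricted dominant weight, so that $L(\mu^1)=\St_n$ is already $G_n$-injective and hence $Q_n(\mu^1)=\St_n$. The standard decomposition of Frobenius-kernel injective hulls (see \cite[II.11]{Jan}) then gives $Q_{n+e}(\mu)\cong Q_e(\lambda)\otimes Q_n(\mu^1)^{(e)}=Q_e(\lambda)\otimes\St_n^{(e)}$ as $G_{n+e}$-modules (with $Q_e(\lambda)$ carrying its compatible $G_{n+e}T$-structure). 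Now restrict to $G_e=\ker F^e\subseteq G_{n+e}$: the factor $\St_n^{(e)}$ is inflated through $F^e$, so $G_e$ acts trivially on it, $\St_n^{(e)}|_{G_e}\cong k^{\oplus\dim\St_n}$, and therefore
$$T|_{G_e}\cong Q_{n+e}(\mu)|_{G_e}\cong Q_e(\lambda)^{\oplus\dim\St_n}.$$
Thus the corollary holds with $M=\dim\St_n=p^{n|\Phi^+|}$ for any sufficiently large $n$.

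There is no genuine obstacle here beyond bookkeeping. The one point to get right is the identification $Q_n((p^n-1)\rho)=\St_n$ together with the weight arithmetic $-w_0\rho=\rho$, which is exactly what makes the tilting highest weight $(p^{n+e}-1)\rho+(p^e-1)\rho+w_0\lambda$ of the Theorem agree with the value $2(p^{n+e}-1)\rho+w_0\mu$ dictated by Donkin's labelling. If one prefers to avoid quoting the $G_{n+e}T$-level decomposition, the same conclusion follows by noting that $T|_{G_e}$, being the restriction of a $G_{n+e}$-injective module, is $G_e$-injective, and then computing its $G_e$-socle: one has $\soc_{G_{n+e}}T=L(\lambda)\otimes\St_n^{(e)}$, which restricts to $L(\lambda)^{\oplus\dim\St_n}$ with $L(\lambda)$ irreducible over $G_e$ (since $\lambda$ is $p^e$-restricted), and a short argument using that $G_{n+e}/G_e\cong G_n^{(e)}$ acts on the $G_e$-socle shows this is already the whole $G_e$-socle; $G_e$-injectivity then yields the stated direct-sum decomposition.
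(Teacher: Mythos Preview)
Your primary argument invokes the tensor decomposition $Q_{n+e}(\mu)\cong Q_e(\lambda)\otimes\St_n^{(e)}$ as $G_{n+e}$-modules, with the parenthetical that $Q_e(\lambda)$ carries ``its compatible $G_{n+e}T$-structure.'' But no such structure is known to exist in general: that is precisely Donkin's conjecture, open for small $p$, and this corollary is meant to establish only a \emph{stable} version of it. The result you cite from \cite[II.11]{Jan} (specifically II.11.16) has the existence of such a lifted structure as an explicit hypothesis. So the primary line of argument is circular as written.

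Your alternative at the end is correct and is essentially the paper's proof. The paper makes your ``short argument'' explicit: set $Z^{(e)}:=\Hom_{G_e}(L(\lambda),T)$, a module for $G_{n+e}/G_e\cong G_n^{(e)}$; from $\soc_{G_{n+e}}T=L(\lambda)\otimes\St_n^{(e)}$ one reads off $\soc_{G_n}Z=\St_n$, and since $\St_n$ is $G_n$-injective and $Z$ is finite dimensional, $Z=\St_n$. Hence $\soc_{G_e}T=L(\lambda)^{\oplus\dim\St_n}$ (the same reasoning gives $\Hom_{G_e}(L(\tau_0),T)=0$ for $p^e$-restricted $\tau_0\neq\lambda$), and $G_e$-injectivity of $T|_{G_e}$ finishes. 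Your sketch captures this, but the step you labelled ``a short argument'' is exactly where the real content lies, and is what replaces the unavailable tensor decomposition.
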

\begin{proof} We can take $T=T((p^{n+e}-1)\rho+(p^e-1)\rho+w_0\lambda)$ as in the statement
 of the theorem. Then $T|_{G_{n+e}}$ identifies with the injective hull in $G_{n+e}$-mod of the irreducible module $L(\lambda)\otimes
 \St_n^{(e)}$. Therefore, $\soc_{G_{n+e}}T=L(\lambda)\otimes \St_n^{(e)}$. Let $Z\in G_{n}$-mod be so that
 $Z^{(e)}=\Hom_{G_e}(L(\lambda),T)$ in $G_{n+e}$-mod. It follows the $G_{n}$-socle of $Z$ must be $\St_n$. Since $\St_n$ is an injective
 $G_n$-module, $\St_n$ divides $Z$, and so $Z=\St_n$. Therefore, the $G_e$-socle of $T$ is $L(\lambda)^{\oplus M}$, for
 $M=\dim\St_n$, so that
 $T|_{G_e}\cong Q_e(\lambda)^{\oplus M}$, as required.
 \end{proof}
There is another useful way to choose a $G$-module isomorphic to a direct sum of copies of $Q_e(\lambda)$, as in Corollary \ref{directsumcorollary}. We state this result as a separate corollary. Observe that $Q(e,\lambda)$ below, when restricted
to $G_e$, is necessarily a direct sum of copies of $Q_e(\lambda)$ by properties (1) and (2).

\begin{cor}\label{lastappendixcorollary} Let $e\geq 1$ be an integer, and let $\lambda\in X^+_{e,p}$. Then there is a (rational,
finite dimensional) $G$-module $Q(e,\lambda)$
such that:

(1) $Q(e,\lambda)|_{G_e}$ is injective and projective.

(2) $L(\lambda)$ is both a $G$-submodule and a $G$-quotient module of $Q(e,\lambda)$.

(3) All irreducible $G_e$-submodules or irreducible $G_e$-quotient modules of $Q(e,\lambda)$ are isomorphic to
$L(\lambda)$.
\end{cor}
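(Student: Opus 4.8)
The plan is to build $Q(e,\lambda)$ from the ``stable'' injective hull of Corollary \ref{directsumcorollary}, followed by an untwisting tensor operation. First I would fix $n$ large enough that Corollary \ref{directsumcorollary} and its proof apply, so that the $G_{n+e}$-injective hull $Q_{n+e}(\lambda+p^e(p^n-1)\rho)$ of $L(\lambda)\otimes\St_n^{(e)}$ carries a compatible rational $G$-module structure, namely that of the indecomposable tilting module $T:=T((p^{n+e}-1)\rho+(p^e-1)\rho+w_0\lambda)$, and moreover $T|_{G_e}\cong Q_e(\lambda)^{\oplus M}$ with $M=\dim\St_n$. Using self-duality of $\St_n^{(e)}$, I would then take $Q(e,\lambda):=T\otimes\St_n^{(e)}$.

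For properties (1) and (3): the Frobenius twist $\St_n^{(e)}$ restricts trivially to $G_e$, so $Q(e,\lambda)|_{G_e}\cong (T|_{G_e})^{\oplus\dim\St_n}\cong Q_e(\lambda)^{\oplus M\dim\St_n}$, a direct sum of copies of $Q_e(\lambda)$. As $Q_e(\lambda)$ is simultaneously the injective hull and the projective cover of $L(\lambda)|_{G_e}$, the $G_e$-module $Q(e,\lambda)|_{G_e}$ is both injective and projective, which is (1). Since $Q_e(\lambda)$ is moreover indecomposable with simple $G_e$-socle and simple $G_e$-head, both isomorphic to $L(\lambda)|_{G_e}$, we have $\soc_{G_e}Q(e,\lambda)\cong L(\lambda)^{\oplus M\dim\St_n}\cong Q(e,\lambda)/\rad_{G_e}Q(e,\lambda)$, so every irreducible $G_e$-submodule and every irreducible $G_e$-quotient module of $Q(e,\lambda)$ is isomorphic to $L(\lambda)$; this is (3).

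For property (2) the key observation is that, $G_{n+e}$ being normal in $G$, both $\soc_{G_{n+e}}T$ and $T/\rad_{G_{n+e}}T$ are, respectively, a $G$-submodule and a $G$-quotient module of $T$ (compare the use of the fact that $\rad_{G_1}$ of a $G$-module is again a $G$-module in the proof of Lemma \ref{Lemma52}). Because $T|_{G_{n+e}}$ is the indecomposable injective---equivalently projective---hull of $L(\lambda)\otimes\St_n^{(e)}$, each of $\soc_{G_{n+e}}T$ and $T/\rad_{G_{n+e}}T$ is, as a $G_{n+e}T$-module, the irreducible module of highest weight $\lambda+p^e(p^n-1)\rho$; hence, as a $G$-module, each is the irreducible $G$-module of that highest weight, which equals $L(\lambda)\otimes\St_n^{(e)}$ by the Steinberg tensor product theorem (using $\lambda\in X^+_{e,p}$). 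Thus there are $G$-module maps $L(\lambda)\otimes\St_n^{(e)}\hookrightarrow T$ and $T\twoheadrightarrow L(\lambda)\otimes\St_n^{(e)}$. Tensoring the first with $\St_n^{(e)}$ and precomposing with the injective coevaluation map $L(\lambda)\hookrightarrow L(\lambda)\otimes\St_n^{(e)}\otimes\St_n^{(e)}$ (self-duality of $\St_n^{(e)}$) produces a $G$-embedding $L(\lambda)\hookrightarrow Q(e,\lambda)$; tensoring the second with $\St_n^{(e)}$ and postcomposing with the surjective evaluation map $L(\lambda)\otimes\St_n^{(e)}\otimes\St_n^{(e)}\twoheadrightarrow L(\lambda)$ produces a $G$-surjection $Q(e,\lambda)\twoheadrightarrow L(\lambda)$. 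This establishes (2).

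I expect the only real subtlety to be bookkeeping: keeping the $G_{n+e}$-module structure (which controls the injective hull, socle and head) cleanly separated from the ambient rational $G$-structure, and checking that tensoring with $\St_n^{(e)}$ preserves the property that the restriction to $G_e$ is a direct sum of copies of $Q_e(\lambda)$. Given the tilting-module structure supplied by Corollary \ref{directsumcorollary}, there should be no serious obstacle.
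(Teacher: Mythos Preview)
Your construction $Q(e,\lambda)=T\otimes\St_n^{(e)}$ is exactly the paper's choice, and your verification of properties (1)--(3) correctly fills in the details that the paper's one-line proof leaves to the reader. In particular, your argument that $\soc_{G_{n+e}}T$ and $T/\rad_{G_{n+e}}T$ are irreducible $G$-modules isomorphic to $L(\lambda)\otimes\St_n^{(e)}$, followed by the coevaluation/evaluation trick with the self-dual module $\St_n^{(e)}$, is a clean way to see (2).
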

\begin{proof} Just take $Q(e,\lambda)=T\otimes \St^{(e)}_{n}$ in the proof of Corollary \ref{directsumcorollary}.\end{proof}

We  believe that many more $G_e$-modules $Q$ have the property that, for some positive integer $M$, $Q^{\oplus M}$ is the
restriction to $G_e$ of a rational $G$-module. We hope to provide necessary and sufficient conditions in a later paper.


\begin{thebibliography}{W}



\bibitem{A1} H. Andersen, Extensions of modules for algebraic groups, {\it Amer. J. Math.} {\bf 106} (1984), 489--504.


\bibitem {AJS} H. Andersen, J. Jantzen, W. Soergel, {\it
Representations of quantum groups at a $p$th root of unity and of
semisimple groups in characteristic $p$}, Ast\'erique {\bf 220}
(1994).

\bibitem{APW} H. Andersen, P. Polo, K. Wen, Representations of
quantum algebras, {\it Invent. math.} {\bf 104} (1991), 571--604.

\bibitem{APW2} H. Andersen, P. Polo, K. Wen, Injective modules for quantum algebras,
{\it Amer. J. Math.} {\bf 114} (1992),  571--604.

\bibitem  {AsS}  M. Aschbacher and L. Scott,
 Maximal subgroups of finite groups
{\em J. Algebra} {\bf 192} (1985), 44-80.

\bibitem{Avr} G. Avrunin, Generic cohomology for twisted groups,
{\it Trans. Amer. Math. Soc.} {\bf 268} (1981), 247--253.






\bibitem{BNP} C. Bendel, D. Nakano, and C. Pillen, Extensions of finite
groups of Lie type II: filtering the truncated induction functor, {\it Contemporary
Math.} {\bf 413} (2006), 1--23.



\bibitem  {Bo} N. Bourbaki, {\it Elements of Mathematics:
 Lie Groups and Lie Algebras,} Chapters 4---6, Springer-Verlag, New York (2002).


\bibitem {Chevalley} C. Chevalley, {\it Classification des Groupes Alg\'ebriques Semi-simples},
Springer-Verlag, New York (2005).


\bibitem{CPS-2} E. Cline, B. Parshall, and L. Scott, Detecting
rational cohomology of algebraic groups, {\it J. London Math. Soc.}
{\bf 28} (1983), 293--300.

  \bibitem {CPS1} E. Cline, B. Parshall, and L.
Scott, Abstract Kazhdan-Lusztig theories, {\it T\^ohoku Math. J.}
{\bf 45} (1993), 511-534.





\bibitem{CPS7} E. Cline, B. Parshall, and L. Scott, Reduced standard
modules and cohomology, {\it Trans. Amer. Math. Soc.} {\bf 361} (2009), 5223-5261.



\bibitem{CPSK} E. Cline, B. Parshall, L. Scott and W. van der
Kallen, Rational and generic cohomology, {\it Invent. math.} {\bf
39} (1977), 143--163.


\bibitem{DDPW} B. Deng, J. Du, B. Parshall, J. Wang, {\it Finite Dimensional Algebras and
Puantum Groups,} Math. Surveys and Monographs {\bf 150}, American Mathematical Society, Providence (2009).

\bibitem{Donkin} S. Donkin, On a question of Verma, {\it J. London Math. Soc.} {\bf 21} (1980), 445--455.

\bibitem{Donkin2} S. Donkin, On tilting modules for algebraic groups, {\it Math. Z.} {\bf 212} (1993), 39--60.

\bibitem{DS} J. Du and L. Scott,
Lusztig conjectures, old and new, I, {\it J. reine angew. math.}
{\bf 455} (1994), 141--182.

\bibitem{Fiebig} P. Fiebig, An upper bound on the exceptional characteristics for Lusztig's character formula, arXiv.0811.167v2.


\bibitem{G} R. Guralnick, The dimension of the first cohomology
groups, in: {\it Representation Theory, II}, Ottawa (1984), in:
Lectures Notes in Math., {\bf 1178}, Springer (1986), 94--97.

\bibitem{GKKL} R. Guralnick, W. Kantor, M. Kassabov, and A. Lubotzky, Presentations of finite simple
groups: profinite and cohomological approaches, {\it Groups Geom. Dyn.} {\bf 1} (2007), 469--523.

\bibitem{GT} R. Guralnick and P. Tiep, First cohomology groups of Chevalley groups in cross
characteristic, preprint (2009).

\bibitem{Jan} J.C. Jantzen, {\it Representations of
Algebraic Groups,} 2nd ed., Math. Surveys and Monographs {\bf 107}, American Mathematical Society, Providence (2003).


\bibitem{Lin} Z. Lin, Highest weight modules for algebraic groups
arising from quantum groups, {\it J. Algebra} {\bf 208} (1998),
276--303.

\bibitem{L1} G. Lusztig, Quantum groups at roots of $1$, {\it Geom.
Dedicata} {\bf 35} (1990), 89--114.

\bibitem{Lbook} G. Lusztig, {\it Introduction to Puantum Groups,} Birkh\"auser, 1993.


 \bibitem{PS4} B. Parshall and L. Scott, Integral and graded quasi-hereditary algebras, II, with applications
to quantum and $q$-Schur algebras, arXiv:0910.0633 (2009).

\bibitem{PS5} B. Parshall and L. Scott, Gradings in modular representations, to appear (2009).

\bibitem{PS6} B. Parshall and L. Scott, Complexity of Kazhdan-Lusztig polynomials and Koszul structures, in preparation (2009).


\bibitem{S2} L. Scott, Some new examples in $1$-cohomology,
{\it J. Algebra} {\bf 260} (2003), 416--425.

\bibitem{S3} L. Scott, Semistandard filtrations in highest weight categories, {\it Mich. Math. J.} {\bf 58} (2009), 339--360.

\bibitem{SX} L. Scott and N. Xi, Some non-trivial Kazhdan-Lusztig
coefficients of an affine Weyl group of type $\widetilde A_n$, arXiv:0909.3394v2.
\bibitem{Sin} P. Sin, Extensions of simple modules for special algebraic groups, {\it J. Algebra} {\bf 170}, (1994) 1011--1034.

\bibitem{Steinberg63} R. Steinberg, Representations of algebraic groups, {\it Nagoya Math. J.} {\bf 22} (1963), 33--56.

\bibitem{Steinberg68} R. Steinberg, Endomorphisms of linear algebraic groups, {\it Mem. Amer. Math. Soc.} {\bf 80} (1968), 1--108.

\bibitem{T} T. Tanisaki,  Character formulas of
Kazhdan-Lusztig type, {\it Representations of finite dimensional
algebras and related topics in Lie theory and geometry}, Fields
Inst. Commun. {\bf 40}, Amererican Mathematical Society, Providence, RI (2004),
261--276.

\bibitem{Ye} J.-C. Ye, Extensions of simple modules for the group $Sp(4,K)$ II, {\it Chinese Sci. Bull.} {\bf 35} (1990), 450--454.


\end{thebibliography}
\end{document}